\newtheorem{theorem}{Theorem}[section]
\newtheorem{proposition}[theorem]{Proposition}
\newtheorem{corollary}[theorem]{Corollary}
\newtheorem{lemma}[theorem]{Lemma}
\newtheorem{conjecture}[theorem]{Conjecture}
\theoremstyle{definition}
\newtheorem{definition}[theorem]{Definition}
\newtheorem{remark}[theorem]{Remark}
\newcommand{\PP}{\mathbb{P}}
\newcommand{\QQ}{\mathbb{Q}}
\newcommand{\CC}{\mathbb{C}}
\newcommand{\RR}{\mathbb{R}}
\newcommand{\ZZ}{\mathbb{Z}}
\newcommand{\VV}{\mathbb{V}}
\newcommand{\cO}{\mathcal{O} }
\newcommand{\cE}{\mathcal{E} }
\newcommand{\cF}{\mathcal{F} }
\newcommand{\cG}{\mathcal{G} }
\newcommand{\cJ}{\mathcal{J} }
\newcommand{\cM}{\mathcal{M} }
\newcommand{\cL}{\mathcal{L} }
\newcommand{\cQ}{\mathcal{Q} }
\newcommand{\cT}{\mathcal{T} }
\newcommand{\rH}{\mathrm{H} }
\newcommand{\rM}{\mathrm{M} }
\newcommand{\rN}{\mathrm{N} }
\newcommand{\bE}{\mathbf{E}}
\newcommand{\ba}{\mathbf{a}}
\newcommand{\bfb}{\mathbf{b}}
\newcommand{\bc}{\mathbf{c}}
\newcommand{\bd}{\mathbf{d}}
\newcommand{\bp}{\mathbf{p}}
\newcommand{\bq}{\mathbf{q}}
\newcommand{\proj}{\mathrm{Proj}\;}
\newcommand{\pHom}{\mathrm{ParHom}}
\newcommand{\spHom}{\mathrm{SParHom}}
\newcommand{\cHom}{\mathcal{H}om}
\newcommand{\cpHom}{\mathcal{P}ar\mathcal{H}om}
\newcommand{\cspHom}{\mathcal{SP}ar\mathcal{H}om}
\def\Hom{\mathrm{Hom} }
\def\Ext{\mathrm{Ext} }
\newcommand{\pdeg}{\mathrm{pdeg}\, }
\newcommand{\rk}{\mathrm{rk}\, }
\def\SL{\mathrm{SL}}
\def\sl{\mathfrak{sl}}
\def\Gr{\mathrm{Gr}}
\def\Fl{\mathrm{Fl}}
\def\git{/\!/ }
\def\Eff{\mathrm{Eff} }
\def\St{\mathrm{Stab} }
\def\Pic{\mathrm{Pic} }
\def\Cox{\mathrm{Cox} }
\begin{document}

\title[Finite generation of the algebra of conformal blocks]{Finite generation of the algebra of type A conformal blocks via birational geometry}
\date{\today}

\author{Han-Bom Moon}
\address{Department of Mathematics, Institute for Advanced Study, Princeton, NJ 08540}
\email{hanbommoon@gmail.com}

\author{Sang-Bum Yoo}
\address{Division of General Studies, UNIST, Ulsan 44919, Republic of Korea}
\email{sangbum.yoo@gmail.com}

\begin{abstract}
We study birational geometry of the moduli space of parabolic bundles over a projective line, in the framework of Mori's program. We show that the moduli space is a Mori dream space. As a consequence, we obtain the finite generation of the algebra of type A conformal blocks. Furthermore, we compute the H-representation of the effective cone which was previously obtained by Belkale. For each big divisor, the associated birational model is described in terms of moduli space of parabolic bundles.
\end{abstract}

\maketitle


\section{Introduction}

The aim of this paper is twofold. First of all, we prove the following finiteness theorem.

\begin{theorem}[\protect{Theorem \ref{thm:fingen}}]\label{thm:finitegeneration}
The algebra of type A conformal blocks over a projective line is finitely generated.
\end{theorem}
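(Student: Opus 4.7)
The plan is to reduce finite generation of the conformal blocks algebra to finite generation of a Cox-type ring on a suitable moduli space of parabolic bundles on $\PP^1$, and then establish that moduli space is a Mori dream space. By Pauly, Laszlo--Sorger and related work, for each level $\ell$ and each tuple of dominant weights at the $n$ marked points, the corresponding space of type A conformal blocks is realized as a space of global sections $H^0(\cM, \cL)$ on a moduli space $\cM$ of quasi-parabolic $\SL_r$-bundles on $\PP^1$, for an appropriate determinant-of-cohomology line bundle $\cL$. Summing over all weights and levels assembles the conformal blocks algebra into a multigraded section ring indexed by a sublattice of $\Pic(\cM)$. Thus, if the Cox ring of $\cM$ is finitely generated, so is any such sub-multigraded algebra, and Theorem~\ref{thm:finitegeneration} follows.

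The first main step is therefore to fix a moduli space $\cM$ large enough that every relevant line bundle with nonvanishing conformal blocks lies in its Picard group, and to identify the conformal blocks algebra with a Veronese-type subring of $\Cox(\cM)$. Here I would work with the moduli stack/space of rank $r$ quasi-parabolic bundles with trivial determinant and fixed flag type, and use the Verlinde/Kumar--Narasimhan--Ramanathan type identification of sections of the theta bundles with conformal blocks. Since the Picard group of this moduli space is of finite rank and explicitly known (generated by the parabolic determinant line bundles together with pullbacks from the flag varieties at the marked points), the conformal blocks algebra sits as a finitely generated module over a Veronese subring of $\Cox(\cM)$.

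The main step, and the hardest, is to prove that $\cM$ is a Mori dream space. My approach would be the following. Because every vector bundle on $\PP^1$ splits, a parabolic bundle is determined up to isomorphism by its splitting type plus flags at the marked points, and stability selects an open subset of a product of flag varieties modulo a reductive automorphism group. For generic weights one thus presents $\cM$ as a GIT quotient
\[
\cM(\ba) \;=\; \bigl(\Fl(r)^{n}\bigr) \git_{\ba} \GL_r,
\]
which is a smooth projective variety with $\QQ$-factorial singularities at worst. I would exploit this presentation to prove MDS in two stages: first verify $\QQ$-factoriality and finiteness of $\Pic(\cM)_\QQ$; second, use wall-and-chamber decomposition of the space of linearizations $\ba$ (equivalently, variation of GIT as in Thaddeus/Dolgachev--Hu) to identify all small $\QQ$-factorial modifications of $\cM$ with other moduli spaces $\cM(\ba')$ of parabolic bundles, and to show that each chamber corresponds to a Mori chamber in $\Eff(\cM)$. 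Combined with Belkale's description of $\Eff(\cM)$ as a rational polyhedral cone, this finiteness of chambers and birational models is exactly Hu--Keel's characterization of an MDS.

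Granting MDS, $\Cox(\cM)$ is a finitely generated $\CC$-algebra by Hu--Keel, and thus so is the multigraded subring cutting out conformal blocks, proving Theorem~\ref{thm:finitegeneration}. The main obstacle is clearly the Mori-theoretic step: showing that every birational model produced by VGIT/wall-crossing is itself a projective moduli space of parabolic bundles (so that the chamber decomposition of $\Eff(\cM)$ is actually finite and each chamber's model is projective), and that the pseudo-effective cone coincides with Belkale's cone rather than being strictly larger. I would handle this by inductively running Mori's program on $\cM$, identifying each contraction with a forgetful or weight-changing morphism between moduli spaces of parabolic bundles of possibly smaller rank or fewer parabolic points, and checking that the induction bottoms out at known MDS (products of projective spaces and Grassmannians).
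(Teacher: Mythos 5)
Your overall reduction (conformal blocks algebra $=$ Cox ring of a moduli space of parabolic bundles, then prove that space is a Mori dream space and invoke Hu--Keel) is the same as the paper's. But the route you propose for the key step --- proving MDS by directly verifying the Hu--Keel chamber criterion via variation of GIT on $\Fl(V)^n\git\SL_r$ --- is genuinely different from the paper's, and it has gaps. The paper instead proves that $\rM_{\bp}(r,0,\ba)$ is of \emph{Fano type}: it computes the anticanonical divisor explicitly as a conformal block ($\rH^0(-K)=\VV^{\dagger}_{2r,(\lambda,\dots,\lambda)}$ with $\lambda=2\sum_j\omega_j$, via the canonical divisor of the GIT quotient plus the blow-up formula for the first scaling wall-crossing), shows via Pauly's ampleness theorem that $-K$ is ample at the central weight $\frac{1}{r}(r-1,\dots,1)$ and remains big and nef after a small perturbation to a dominant weight, and then invokes BCHM to conclude MDS. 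This single positivity computation replaces the entire chamber-by-chamber verification you propose.

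The concrete gaps in your version are these. First, your starting presentation is wrong for general weights: $\rM_{\bp}(r,0,\ba)\cong\Fl(V)^n\git_{L}\SL_r$ only when $\ba$ satisfies the smallness condition of Theorem \ref{thm:smallweightGIT}, because only then is every semistable underlying bundle trivial; for larger weights nontrivial splitting types occur, the relevant automorphism groups are not reductive, and the moduli space is a \emph{blow-up} of the GIT quotient (the exceptional divisor being the theta divisor $\Theta$), not the quotient itself. Relatedly, the weight polytope has dimension $(r-1)n$ while the Picard number of the dominant model is $(r-1)n+1$; the extra level/$\Theta$ direction is invisible to pure VGIT on $\Fl(V)^n$, so your chamber decomposition cannot by itself cover $\Eff(\rM)$ or the movable cone, which is exactly what Hu--Keel's criterion requires. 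Second, you acknowledge but do not resolve the hardest point: showing that the pseudo-effective cone equals Belkale's cone and that the finitely many VGIT models' nef cones exhaust the movable cone. In the paper this is only established \emph{after} MDS is known (Propositions \ref{prop:birationalmodels} and \ref{prop:facetGWinv} use polyhedrality of $\Eff(\rM)$, which they get from MDS), so using it as an input to MDS risks circularity unless you supply an independent argument. Third, the proposed induction ``bottoming out at products of projective spaces and Grassmannians'' is not well-founded: the models $\rM(D)$ for $D$ in the interior of the effective cone are all birational to $\rM$ itself (same rank, same $n$); only the facets of $\Eff(\rM)$ produce lower-rank products, so there is no descending induction on the interior chambers. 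Finally, you do not address small $n$, where no weight is dominant; the paper handles this by the forgetful map and propagation of vacua, realizing the Cox ring as a torus-invariant subring of the large-$n$ case.
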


\emph{Conformal blocks} introduced by Tsuchiya, Kanie, Ueno and Yamada to construct a two-dimensional chiral conformal field theory (WZW model) (\cite{TK88, TUY89, Uen08}). For each $(C, \mathbf{p}) \in \overline{\cM}_{g, n}$, a simple Lie algebra $\mathfrak{g}$, a nonnegative integer $\ell$, and a collection of dominant integral weights $\vec{\lambda} := (\lambda^{1}, \lambda^{2}, \cdots, \lambda^{n})$ such that $(\lambda^{i}, \theta) \le \ell$ where $\theta$ is the highest root, they constructed a finite dimensional vector space $\VV^{\dagger}_{\ell, \vec{\lambda}}$ so-called the space of conformal blocks or the space of vacua. 

Conformal blocks have several interesting connections in algebraic geometry. It is known that $\VV^{\dagger}_{\ell, \vec{\lambda}}$ can be naturally identified with the space of global sections, so-called generalized theta functions, of a certain line bundle on the moduli space of parabolic principal $G$-bundles (\cite{Pau96, LS97}). They can be also regarded as a quantum generalization of invariant factors (\cite{Bel08b}). Recently, conformal blocks have been studied to construct positive vector bundles on $\overline{\cM}_{0,n}$ (see \cite{BGM16} and references therein).

We will focus on $C = \PP^{1}$ and $\mathfrak{g} = \sl_{r}$ case. There is a map
\[
	\VV^{\dagger}_{\ell, \vec{\lambda}} \otimes
	\VV^{\dagger}_{m, \vec{\mu}}
	\to \VV^{\dagger}_{\ell+m, \vec{\lambda}+\vec{\mu}}
\]
which defines a commutative $\mathrm{Pic}(\rM_{\bp}(r, 0))$-graded $\CC$-algebra structure on
\[
	\VV^{\dagger} :=
	\bigoplus_{\ell, \vec{\lambda}}\VV^{\dagger}_{\ell, \vec{\lambda}}
\]
where $\rM_{\bp}(r,0)$ is the moduli stack of rank $r$, degree $0$ parabolic bundles on $\PP^{1}$. $\VV^{\dagger}$ is called the \emph{algebra of conformal blocks} and naturally identified with the Cox ring of $\rM_{\bp}(r, 0)$.

Despite of many results on the structure of $\VV^{\dagger}$, many fundamental questions such as the finite generation of $\VV^{\dagger}$, are still open. See \cite{Man09b, Man13, Man16, MY16} for some partial results. To prove the finite generation of $\VV^{\dagger}$, we study birational geometry of $\rM_{\bp}(r, 0, \ba)$, the moduli space of rank $r$, degree $0$, $\ba$-semistable parabolic bundles on $\PP^{1}$, in the framework of \emph{Mori's program}.

For a normal $\QQ$-factorial projective variety $X$ with trivial irregularity, Mori's program, or log minimal model program, consists of the following three steps.

\begin{enumerate}
\item Compute the effective cone $\Eff(X)$ in $\rN^{1}(X)_{\RR}$.
\item For each integral divisor $D \in \Eff(X)$, compute the projective model
\[
	X(D) := \proj \bigoplus_{m \ge 0}\rH^{0}(X, \cO(mD)).
\]
\item Study the rational contraction $X \dashrightarrow X(D)$.
\end{enumerate}

However, even among very simple varieties such as a blow-up of $\PP^{n}$ along some points, there are examples that Mori's program cannot be completed because of two kinds of infinity: There may be infinitely many rational contractions, and more seriously, the section ring $\bigoplus_{m \ge 0}\rH^{0}(X, \cO(mD))$ may not be finitely generated and thus $X(D)$ is not a projective variety.

A \emph{Mori dream space} (\cite{HK00}), MDS for short, is a special kind of variety that has no such technical difficulties to run Mori's program: The effective cone is polyhedral, there is a finite chamber structure which corresponds to finitely many different projective models, and every divisor has a finitely generated section ring. In Section \ref{sec:MDS}, we show that $\rM_{\bp}(r, 0, \ba)$ is a MDS. On the other hand, under some assumption, $\VV^{\dagger}$ can be identified with $\Cox(\rM_{\bp}(r, 0, \ba))$. We obtain the finite generation by \cite[Proposition 2.9]{HK00}.

Even for a MDS $X$, it is very difficult to complete Mori's program if $\rho(X) \ge 3$ (see \cite{CM17} for an example). As the second main result, we complete Mori's program for $\rM_{\bp}(r, 0, \ba)$. The rank two case was done in \cite{MY16}.

\begin{theorem}\label{thm:Moriprogram}
Assume $n > 2r$. Let $\rM := \rM_{\bp}(r, 0, \ba)$ for a general effective parabolic weight $\ba$.
\begin{enumerate}
\item (Proposition \ref{prop:facetGWinv}) If $\rho(\rM) = (r-1)n + 1$, which is the possible maximum, the effective cone $\Eff(\rM)$ is the intersection of an explicit finite set of half-planes in $\rN^{1}(\rM)_{\RR}$. In particular, for any collection of partitions $\lambda^{1}, \cdots, \lambda^{n}$, $d \ge 0$ and $0 < s < r$ such that the Gromov-Witten invariant $\langle \omega_{\lambda^{1}}, \omega_{\lambda^{2}}, \cdots, \omega_{\lambda^{n}}\rangle_{d}$ of Grassmannian $\Gr(s, r)$ is one, there is a hyperplane supporting $\Eff(\rM)$. \item (Proposition \ref{prop:birationalmodels}, Section \ref{ssec:projmodel}) For any $D \in \mathrm{int} \Eff(\rM)$, $\rM(D) = \rM_{\bp}(r, 0, \bfb)$ for some parabolic weight $\bfb$. The projective models associated to facets of $\Eff(\rM)$ can be also described in terms of moduli spaces of parabolic bundles.
\item For a general $D \in \mathrm{int}\Eff(\rM)$, the rational contraction $\rM \dashrightarrow \rM(D)$ is a composition of smooth blow-ups and blow-downs.
\end{enumerate}
\end{theorem}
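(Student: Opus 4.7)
The backbone of the argument is variation of GIT applied to the parabolic weight $\ba$. Since $\rM_{\bp}(r,0,\ba)$ arises as a GIT quotient whose linearization is determined by $\ba$, the space of admissible weights carries a wall-and-chamber structure: the moduli space is constant on each open chamber, and each wall records the appearance of strictly semistable parabolic bundles. I would first identify $\rN^{1}(\rM)_{\RR}$ with the affine hull of the relevant slice of weight space via the parabolic and determinant line bundles, so that for a chamber weight $\bfb$ the ample cone of $\rM_{\bp}(r,0,\bfb)$ matches the interior of its VGIT chamber. Section~\ref{sec:MDS} already shows that $\rM$ is an MDS, so its Mori chamber decomposition exists, is finite, and must agree with the VGIT chamber decomposition under this identification.

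Part~(2) is then essentially formal. For $D \in \intr \Eff(\rM)$, the MDS property produces a projective model $\rM(D)$ whose ample cone is the Mori chamber containing $D$; by VGIT that chamber is the ample cone of $\rM_{\bp}(r,0,\bfb)$ for any $\bfb$ in it, forcing $\rM(D) \cong \rM_{\bp}(r,0,\bfb)$. For $D$ on a facet, the GIT quotient at the corresponding wall-weight performs the expected $S$-equivalence identification and produces a moduli space attached to a boundary weight, and the contraction $\rM \to \rM(D)$ is the induced quotient morphism.

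For part~(1), a class $\bfb$ in weight space fails to be effective precisely when every parabolic bundle carries a destabilizing sub-bundle; such a sub-bundle is specified by a rank $0<s<r$, a degree $d \ge 0$, and the Schubert positions $\vec\lambda=(\lambda^{1},\ldots,\lambda^{n})$ of the parabolic flags inside a $\Gr(s,r)$-bundle. Each triple $(s,d,\vec\lambda)$ cuts out a hyperplane in weight space, and a Belkale-style intersection-theoretic computation on the underlying Quot scheme shows that the hyperplane supports an actual facet of $\Eff(\rM)$ if and only if the Gromov-Witten number $\langle \omega_{\lambda^{1}},\ldots,\omega_{\lambda^{n}}\rangle_{d}$ on $\Gr(s,r)$ is one. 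Finiteness of such triples, together with the MDS property, yields the H-representation.

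For part~(3), I would apply Thaddeus-Dolgachev-Hu wall-crossing: for generic $D$, the segment from the ample class of $\rM$ to $D$ crosses finitely many walls transversally, and the local transformation across each wall is governed by the projective normal cones of the strictly semistable locus on either side. These cones are projective bundles over a common lower-dimensional moduli space parameterizing the pair (destabilizing sub-bundle, quotient) on $\PP^{1}$. The hypothesis $n > 2r$ together with genericity of $\ba$ is what forces both projective bundles to have strictly positive relative dimension, so that the wall-crossing is a smooth blow-down followed by a smooth blow-up rather than a small modification. The hardest step is the dimension bookkeeping at every wall crossed along the path, and it is precisely this count which demands the inequality $n > 2r$; the rank two case carried out in \cite{MY16} should serve as the local model for what to verify at each wall.
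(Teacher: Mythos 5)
Your overall strategy --- variation of the parabolic weight as a VGIT problem, identification of the weight polytope with a slice of $\rN^{1}(\rM)_{\RR}$, and Thaddeus wall-crossing for part (3) --- is the same as the paper's, but the proposal leaves genuine gaps exactly at the decisive steps. For part (1), you defer the connection between facets and Gromov--Witten numbers to ``a Belkale-style intersection-theoretic computation on the Quot scheme,'' which is precisely the external result the paper sets out to reprove by elementary means. The actual mechanism (Proposition \ref{prop:facetGWinv}) is concrete: at a facet not of type $\lambda^{i}_{j}\ge\lambda^{i}_{j+1}$ or $\lambda^{i}_{1}\le\ell$, a general parabolic bundle $(\cO^{r},\{W^{i}_{\bullet}\})$ acquires a destabilizing subbundle $E^{+}$ of rank $s$ and degree $d$, and such a subbundle \emph{is} a degree $-d$ map $f:\PP^{1}\to\Gr(s,r)$ with $f(p^{i})\in\Omega_{\lambda^{i}}(W^{i}_{\bullet})$; uniqueness of the destabilizer for general flags is exactly the statement $\langle\omega_{\lambda^{1}},\cdots,\omega_{\lambda^{n}}\rangle_{-d}=1$. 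Without this step you have not produced the H-representation. For part (2), the ``essentially formal'' matching of Mori chambers with VGIT chambers is not sufficient: to identify $\rM(D)$ with $\rM_{\bp}(r,0,\bfb)$ you must know that the section ring of $D$ on $\rM$ is computed by conformal blocks (Proposition \ref{prop:coxringdominantweight}) and then invoke Pauly's theorem (Theorem \ref{thm:Pauly}) that $\VV^{\dagger}_{\ell,\vec\lambda}$ is a \emph{complete ample} linear system on $\rM_{\bp}(r,0,\bfb)$ with $b^{i}_{j}=\lambda^{i}_{j}/\ell$. Moreover the facet $\lambda^{k}_{1}=\ell$ is not a VGIT wall of $\Fl(V)^{n}$ at all (the level direction only appears after the blow-up creating $\Theta$); its model is a moduli space of degree $-1$ parabolic bundles obtained by an elementary modification at $p^{k}$, which requires the separate semistability computation of Proposition \ref{prop:projmodel3}.

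For part (3) you misattribute the role of $n>2r$: that hypothesis is used to guarantee the maximal Picard number via Proposition \ref{prop:maxPicnumGIT} (and the dominance/Fano-type arguments), not to control the wall-crossings. Thaddeus's result (Proposition \ref{prop:wallcrossing}) says that \emph{every} simple wall-crossing is a smooth blow-up followed by a smooth blow-down, whatever the relative dimensions of $Y^{\pm}\to Y$; whether the modification is a flip or a divisorial contraction is irrelevant to the statement, and your claim that positive relative dimension on both sides prevents a small modification is backwards. Once (2) is established, (3) follows directly by decomposing the path from $\ba$ to $\bfb$ into simple wall-crossings; no additional dimension bookkeeping is needed there.
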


\begin{remark}
\begin{enumerate}
\item When $n$ is small, we may think of $\rM$ as the target of an algebraic fiber space $\rM_{\bq}(r, 0, \ba') \to \rM$ where the domain is the moduli space of parabolic bundles with larger number of parabolic points. Thus the Mori's program for $\rM$ becomes a part of that of $\rM_{\bq}(r, 0, \ba')$.
\item The H-representation in Item (1) was obtained by Belkale in \cite[Theorem 2.8]{Bel08b} in a greater generality by a different method. Our approach using wall-crossings is independent from his idea and is elementary. On the other hand, his result indeed tells us a strong positivity: Any integral divisor class in $\Eff(\rM)$ is effective.
\item Once (2) is shown, (3) follows from a result of Thaddeus (\cite[Section 7]{Tha96}).
\end{enumerate}
\end{remark}

We leave a brief outline of the proof of Theorem \ref{thm:finitegeneration}. For simplicity, suppose that $n$ is large enough. The main technique we employ is a careful analysis of wall-crossings. The moduli space $\rM_{\bp}(r, 0, \ba)$ depends on a choice of a parabolic weight $\ba$. For two general parabolic weights $\ba$ and $\bfb$, there is a birational map $\rM_{\bp}(r, 0, \ba) \dashrightarrow \rM_{\bp}(r, 0, \bfb)$ provided stable loci of these two moduli spaces are nonempty. Then the birational map can be decomposed into finitely many explicit blow-ups and blow-downs, and the change can be measured explicitly. Furthermore, when $\ba$ is sufficiently small, then $\rM_{\bp}(r, 0, \ba) \cong \Fl(V)^{n}\git_{L}\SL_{r}$ for some explicit linearization $L$. By analysing the geometry of the GIT quotient and wall-crossings, we obtain the canonical divisor of $\rM_{\bp}(r, 0, \ba)$ where $\ba$ is dominant in the sense that $\Cox(\rM_{\bp}(r, 0)) = \Cox(\rM_{\bp}(r, 0, \ba))$. We show that after finitely many flips, the anticanonical divisor $-K$ becomes big and nef. Because the moduli space is smooth, it is of Fano type. Therefore it is a MDS.

The method of the proof does not provide any explicit set of generators. It is an interesting problem to construct a generating set. Several results of Manon (\cite{Man09b, Man13, Man16}) suggest the next conjecture. Note that the effective cone is polyhedral, so there are finitely many extremal rays.

\begin{conjecture}
The algebra $\VV^{\dagger}$ of conformal blocks is generated by the set of effective divisors whose numerical classes are the first integral points of extremal rays of $\Eff(\rM_{\bp}(r, 0))$. 
\end{conjecture}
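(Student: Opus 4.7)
The plan is to exploit the Mori dream space structure established in Theorem~\ref{thm:finitegeneration}: because $\Cox(\rM_{\bp}(r,0)) \cong \VV^{\dagger}$ is finitely generated, the effective cone $\Eff(\rM_{\bp}(r,0))$ is rational polyhedral with finitely many extremal rays. Write $E_{1},\dots,E_{k}$ for the primitive lattice points on these rays and let $R \subset \VV^{\dagger}$ be the $\CC$-subalgebra generated by $\rH^{0}(\cO(E_{1})),\dots,\rH^{0}(\cO(E_{k}))$. I would aim to prove $R = \VV^{\dagger}$ by reducing to a multiplicativity statement: any integral class $D \in \Eff$ admits a decomposition $mD = \sum n_{i} E_{i}$ with $m, n_{i} \in \ZZ_{\ge 0}$, so the conjecture amounts to showing that the natural multiplication
\[
\bigotimes_{i} \sym^{n_{i}} \rH^{0}(\cO(E_{i})) \longrightarrow \rH^{0}(\cO(mD))
\]
hits a generating set of the target in each graded piece. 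The Mori chamber decomposition from Theorem~\ref{thm:Moriprogram} localizes this question to finitely many chambers.

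From here I would pursue a wall-crossing induction. Starting from a reference chamber where $\rM_{\bp}(r,0,\ba) \cong \Fl(V)^{n} \git_{L} \SL_{r}$, the section ring of a nef divisor is controlled by a classical GIT/invariant-theoretic calculation, and the indecomposable generators already lie on extremal rays of the relevant GIT chamber. As one crosses a wall via the smooth blow-up/blow-down description of \cite[Section~7]{Tha96} used in part~(3) of Theorem~\ref{thm:Moriprogram}, new sections supported on an exceptional divisor can be traced to divisor classes on facets of $\Eff(\rM_{\bp}(r,0))$, which are themselves generated by primitive classes on the extremal rays bounding those facets. Complementarily, one can degenerate $\PP^{1}$ to a nodal chain and invoke the factorization theorem for conformal blocks to reduce generation to the three-pointed case, assembled via the tree/branching algebra presentations of Manon (\cite{Man09b, Man13, Man16}).

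The main obstacle is that in a general MDS the multiplication $\rH^{0}(L_{1}) \otimes \rH^{0}(L_{2}) \to \rH^{0}(L_{1} \otimes L_{2})$ is \emph{not} surjective, so the conjecture asserts a strong positivity of $\VV^{\dagger}$ well beyond finite generation: no indecomposable generator sits strictly inside $\Eff$. Belkale's H-representation together with the Gromov-Witten facets in part~(1) of Theorem~\ref{thm:Moriprogram} identify many extremal rays with contractions whose sections admit smaller-moduli or fusion-rule descriptions, but obtaining a uniform argument across \emph{all} extremal rays---including those that only appear after higher-rank flips---is, in my view, the genuine content of the conjecture and the step most likely to resist the wall-crossing machinery developed in this paper.
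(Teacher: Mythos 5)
This statement is a \emph{conjecture} in the paper, not a theorem: the authors explicitly say that their proof of finite generation ``does not provide any explicit set of generators'' and that the statement is only \emph{suggested} by Manon's results for $r \le 3$. There is no proof in the paper to compare against, and your proposal does not close the gap either --- as you yourself concede in your final paragraph. So the honest verdict is that the proposal is a strategy sketch for an open problem, not a proof.

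To be concrete about where it breaks down: your reduction to surjectivity of the multiplication maps $\bigotimes_i \sym^{n_i}\rH^0(\cO(E_i)) \to \rH^0(\cO(mD))$ is the right reformulation, but no step of the proposal actually establishes that surjectivity. The wall-crossing induction is circular at its key step: asserting that sections appearing after a flip ``can be traced to divisor classes on facets of $\Eff$, which are themselves generated by primitive classes on the extremal rays bounding those facets'' presupposes a generation statement of exactly the type being proved. The base case is also not classical: generation of $\left(\bigoplus \rH^0(\Fl(V)^n, L)\right)^{\SL_r}$ by extremal invariants is essentially the content of Manon's theorems, which are only known for $r \le 3$ and generic $\bp$; indeed the paper's own remark after Theorem \ref{thm:fingen} notes that for $r \ge 4$ the level-one conformal blocks do \emph{not} suffice, which already shows that the naive degeneration/factorization route you mention cannot work verbatim in higher rank. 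Finally, for a general Mori dream space the Cox ring can require generators whose classes lie in the interior of the effective cone, so the conjecture asserts a positivity property strictly stronger than anything delivered by the Fano-type/MDS machinery of Sections \ref{sec:MDS} and \ref{sec:MMP}. Your diagnosis of the obstacle is accurate; what is missing is any mechanism to overcome it.
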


\subsection{Organization of the paper}

We begin in Section \ref{sec:modparbdl} by recalling some preliminaries on the moduli space of parabolic bundles and deformation theory. In Section \ref{sec:smallweight}, we identify moduli spaces of parabolic bundles with small weights with elementary GIT quotients. Section \ref{sec:wallcrossing} reviews wall-crossing analysis. In Section \ref{sec:MDS}, we prove Theorem \ref{thm:finitegeneration}. Finally in Section \ref{sec:MMP}, we prove Theorem \ref{thm:Moriprogram}.

\subsection*{Notations and conventions}

We work on an algebraically closed field $\CC$ of characteristic zero. Unless there is an explicit statement, we will fix $n$ distinct points $\bp = (p^{1}, \cdots, p^{n})$ on $\PP^{1}$. These points are called \emph{parabolic points}. The number $n$ of parabolic points is always at least three. $[r]$ denotes the set $\{1,2,\cdots,r\}$. To minimize the introduction of cumbersome notation, mainly we will discuss parabolic bundles with full flags only, except Section \ref{ssec:projmodel} on degenerations of moduli spaces. The readers may easily generalize most part of the paper, to the partial flag cases. In many literatures the dual $\VV_{\ell, \vec{\lambda}}$ of $\VV_{\ell, \vec{\lambda}}^{\dagger}$ has been denoted by conformal blocks.

\subsection*{Acknowledgements}

The authors thank Prakash Belkale and Young-Hoon Kiem for their comments and suggestions on an earlier draft. The first author thanks Jinhyung Park for helpful conversations.


\section{Moduli space of parabolic bundles}\label{sec:modparbdl}

In this section, we give a brief review on parabolic bundles and their moduli spaces.

\subsection{Parabolic bundles and their moduli spaces}

\begin{definition}
Fix parabolic points $\mathbf{p} = (p^{1}, p^{2}, \cdots, p^{n})$ on $\PP^{1}$. A rank $r$ \emph{quasi parabolic bundle} over $(\PP^{1}, \mathbf{p})$ is a collection of data $\cE := (E, \{W_{\bullet}^{i}\})$ where:
\begin{enumerate}
\item $E$ is a rank $r$ vector bundle over $\PP^{1}$;
\item For each $1 \le i \le n$, $W_{\bullet}^{i} \in \Fl(E|_{p^{i}})$. In other words, $W_{\bullet}^{i}$ is a strictly increasing filtration of subspaces $0 \subsetneq W_{1}^{i} \subsetneq W_{2}^{i} \subsetneq \cdots \subsetneq W_{r-1}^{i} \subsetneq W_{r}^{i} = E|_{p^{i}}$. In particular, $\dim W_{j}^{i} = j$.
\end{enumerate}
\end{definition}

Let $\rM_{\bp}(r, d)$ be the moduli stack of rank $r$, degree $d$ quasi parabolic bundles over $(\PP^{1}, \bp)$. This moduli stack is highly non-separated Artin stack. To obtain a proper moduli space, as in the case of moduli spaces of ordinary vector bundles, we introduce the parabolic slope and a stability condition, and collect semi-stable objects only. One major difference here is that there are many different ways to define stability while there is a standard one in the case of ordinary bundles.

\begin{definition}
\begin{enumerate}
\item A \emph{parabolic weight} is a collection $\ba = (a_{\bullet}^{1}, a_{\bullet}^{2}, \cdots, a_{\bullet}^{n})$ of strictly decreasing sequences $a_{\bullet}^{i} = (1 > a_{1}^{i} > \cdots > a_{r-1}^{i} > a_{r}^{i} \ge 0)$ of length $r$. Let $|\ba|_{j} := \sum_{i=1}^{n}a_{j}^{i}$ and $|\ba| := \sum_{j=1}^{r-1}|\ba|_{j}$.
\item A \emph{parabolic bundle} is a collection $\cE := (E, \{W_{\bullet}^{i}\}, \ba)$.
\end{enumerate}
\end{definition}

\begin{definition}
Let $\cE = (E, \{W_{\bullet}^{i}\}, \ba)$ be a parabolic bundle.
\begin{enumerate}
\item The \emph{parabolic degree} of $\cE$ is
\[
	\pdeg \cE := \deg E + |\ba|.
\]
\item The \emph{parabolic slope} of $\cE$ is $\mu(\cE) = \pdeg \cE/\rk E$.
\end{enumerate}
\end{definition}

Let $\cE = (E, \{W_{\bullet}^{i}\}, \ba)$ be a parabolic bundle of rank $r$. For each subbundle $F \subset E$, there is a natural induced flag structure ${W|_{F}}_{\bullet}^{i}$ on $F|_{p^{i}}$. More precisely, let $\ell$ be the smallest index such that $\dim(W_{\ell}^{i} \cap F|_{p^{i}}) = j$. Then ${W|_{F}}_{j}^{i} = W_{\ell}^{i}\cap F|_{p^{i}}$. Furthermore, we can define the induced parabolic weight $\bfb = (b_{\bullet}^{i})$ on $F|_{p^{i}}$ as $b_{j}^{i} = a_{\ell}^{i}$. This collection of data $\cF := (F, \{{W|_{F}}_{\bullet}^{i}\}, \bfb)$ is called a \emph{parabolic subbundle} of $\cE$. Similarly, one can define the induced flag $W/F_{\bullet}^{i}$ on $E/F|_{p^{i}}$, the inherited parabolic weight $\bc$, and the \emph{quotient parabolic bundle} $\cQ := (E/F, \{W/F_{\bullet}^{i}\},\bc)$.

\begin{definition}
A parabolic bundle $\cE = (E, \{W_{\bullet}^{i}\}, \ba)$ is $\mathbf{a}$-\emph{(semi)-stable} if for every parabolic subbundle $\cF$ of $\cE$, $\mu(\cF) (\le) < \mu(\cE)$.
\end{definition}

Let $\rM_{\bp}(r, d, \ba)$ be the moduli space of S-equivalent classes of rank $r$, topological degree $d$ $\ba$-semistable parabolic bundles over $(\PP^{1}, \mathbf{p})$. It is an irreducible normal projective variety of dimension $n{r \choose 2} - r^{2}+1$ if it is nonempty (\cite[Theorem 4.1]{MS80}).

\begin{remark}
For a general parabolic weight $\ba$ for a rank $r$ quasi parabolic bundle, we can define the \emph{normalized} weight $\ba'$ as ${a'}_{j}^{i} = a_{j}^{i} - a_{r}^{i}$. Then it is straightforward to check that the map
\begin{eqnarray*}
	\rM_{\bp}(r, d, \ba) & \to & \rM_{\bp}(r, d, \ba')\\
	(E, \{W_{\bullet}^{i}\}, \ba) & \mapsto &
	(E, \{W_{\bullet}^{i}\}, \ba')
\end{eqnarray*}
is an isomorphism. Thus we assume that any given parabolic weight is normalized.
\end{remark}

Finally, we leave two notions on weight data.

\begin{definition}
\begin{enumerate}
\item A parabolic weight $\ba$ is \emph{effective} if $\rM_{\bp}(r, d, \ba)$ is nonempty. 
\item A parabolic weight $\ba$ is \emph{general} if the $\ba$-semistability coincides with the $\ba$-stability.
\end{enumerate}
\end{definition}

\begin{remark}
The notion of parabolic bundles can be naturally generalized to parabolic bundles with \emph{partial flags}. For notational simplicity, we do not describe them here. Consult \cite[Definition 1.5]{MS80}. In this paper, we use moduli spaces of parabolic bundles with partial flags only in Section \ref{ssec:projmodel} to describe projective models associated to non-big divisors. A reader who is not interested in this topic can ignore partial flag cases.
\end{remark}

\subsection{The algebra of conformal blocks}\label{ssec:conformalblock}

For an $r$-dimensional vector space $V$, the full flag variety $\Fl(V)$ is embedded into $\prod_{j=1}^{r-1}\Gr(j, V)$, and every line bundle on $\Fl(V)$ is the restriction of $\cO(b_{\bullet}) := \cO(b_{1}, b_{2}, \cdots, b_{r-1})$. By Borel-Weil theorem, if all $b_{i}$'s are nonnegative, or equivalently $\cO(b_{\bullet})$ is effective, then $\rH^{0}(\Fl(V), \cO(b_{\bullet}))$ is the irreducible $\SL_{r}$-representation $V_{\lambda}$ with the highest weight $\lambda = \sum_{i=1}^{r-1}b_{r-i}\omega_{i}$. $F_{\lambda}$ denotes $\cO(b_{\bullet})$.

Recall that $\rM_{\bp}(r, 0)$ is the moduli stack of rank $r$, degree $0$ quasi parabolic bundles over $(\PP^{1}, \mathbf{p})$. The Picard group of $\rM_{\bp}(r, 0)$ is isomorphic to
\[
	\ZZ \cL \times \prod_{i=1}^{n}\mathrm{Pic}(\Fl(V))
\]
(\cite{LS97}). In particular, its Picard number is $(r-1)n + 1$.

The generator $\cL$ is the determinant line bundle on $\rM_{\bp}(r, 0)$ which has the following functorial property: For any family of rank $r$ quasi parabolic bundles $\cE = (E, \{W_{\bullet}^{i}\})$ over $S$, consider the determinant bundle $L_{S} := \det R^{1}\pi_{S*}E \otimes (\det \pi_{S*}E)^{-1}$, where $\pi_{S}:X\times S\to S$ is the projection to $S$. If $p : S \to \rM_{\bp}(r, 0)$ is the functorial morphism, then $p^{*}(\cL) = L_{S}$. The line bundle $\cL$ has a unique section denoted by $\Theta$. This section $\Theta$ vanishes exactly on the locus of $\cE = (E, \{W_{\bullet}^{i}\})$ such that $E \ne \cO^{r}$ (\cite[Section 10.2]{BGM15}).

Any line bundle $F \in \mathrm{Pic}(\rM_{\bp}(r, 0))$ can be written uniquely as $\cL^{\ell}\otimes \otimes_{i=1}^{n}F_{\lambda^{i}}$ where $F_{\lambda^{i}}$ is a line bundle associated to the integer partition $\lambda^{i}$. The space of global sections $\rH^{0}(F)$ is identified with the space of conformal blocks $\VV_{\ell, \vec{\lambda}}^{\dagger} := \VV_{\ell,(\lambda^{1}, \cdots, \lambda^{n})}^{\dagger}$ (\cite[Corollary 6.7]{Pau96}). In particular, $\VV_{1, \vec{0}}^{\dagger}$ is generated by $\Theta$. In general, $\VV_{\ell, \vec{\lambda}}^{\dagger}$ is trivial if $\lambda^{i}_{1} > \ell$ for some $1 \le i \le n$.

Note that there is a natural injective map $\VV_{\ell, \vec{\lambda}}^{\dagger} \hookrightarrow \VV_{\ell+1, \vec{\lambda}}^{\dagger}$ given by the multiplication of $\Theta$. Moreover, when $\ell \ge (\sum_{i=1}^{n}\sum_{j=1}^{r-1}\lambda_{j}^{i})/(r+1)$, $\VV_{\ell, \vec{\lambda}}^{\dagger} \cong V_{\vec{\lambda}}^{\SL_{r}} := (\bigoplus_{i=1}^{n}V_{\lambda^{i}})^{\SL_{r}}$ (\cite[Proposition 1.3]{BGM15}). Thus spaces of conformal blocks define a filtration on $V_{\vec{\lambda}}^{\SL_{r}}$ and we may regard them as generalized invariant factors. 

The direct sum of all conformal blocks
\[
	\VV^{\dagger} := \bigoplus_{\ell, \vec{\lambda}}
	\VV_{\ell,\vec{\lambda}}^{\dagger}
\]
has a $\mathrm{Pic}(\rM_{\bp}(r,0))$-graded algebra structure. This algebra $\VV^{\dagger}$ is called the \emph{algebra of conformal blocks}. Note that $\VV^{\dagger}$ is the space of all sections of line bundles on $\rM_{\bp}(r, 0)$. Thus $\VV^{\dagger}$ is the \emph{Cox ring} $\mathrm{Cox}(\rM_{\bp}(r, 0))$ of the moduli stack $\rM_{\bp}(r, 0)$.

\subsection{Deformation theory}

To analyze wall crossings on the moduli space in detail, we employ some results from deformation theory of parabolic bundles, which was intensively studied by Yokogawa in \cite{Yok95}. In this section we summarize some relevant results.

Let $\cE = (E, \{W_{\bullet}^{i}\}, \ba)$ and $\cF = (F, \{{W'}_{\bullet}^{i}\}, \bfb)$ be two parabolic bundles. A bundle morphism $f : E \to F$ is called \emph{(strongly) parabolic} if $f(W_{j}^{i}) \subset {W'}_{k}^{i}$ whenever $a_{j}^{i} (\ge) > b_{k+1}^{i}$. The sheaves of parabolic morphisms and strongly parabolic morphisms are denoted by $\cpHom(\cE, \cF)$ and $\cspHom(\cE, \cF)$ respectively. The spaces of their global sections are denoted by $\pHom(\cE, \cF)$ and $\spHom(\cE, \cF)$.

Yokogawa introduced an abelian category $\mathsf{P}$ of parabolic $\cO_{\PP^1}$-modules which contains the category of parabolic bundles as a full subcategory. $\mathsf{P}$ has enough injective objects, so we can define right derived functor $\Ext^{i}(\cE, -)$ of $\pHom(\cE, -)$. Those cohomology groups can be described in terms of ordinary cohomology groups and behave similarly.

\begin{lemma}[\protect{\cite[Theorem 3.6]{Yok95}}]\label{lem:derivedfunctor}
\[
	\Ext^{i}(\cE, \cF) \cong \rH^{i}(\cpHom(\cE, \cF)).
\]
\end{lemma}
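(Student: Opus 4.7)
The plan is to deduce the identification from a local-to-global spectral sequence, exactly as one does in the non-parabolic setting. Let me write $\cExt^{i}(\cE,-)$ for the $i$-th right derived functor of $\cpHom(\cE,-)$, viewed as a functor from $\mathsf{P}$ to the category of $\cO_{\PP^{1}}$-modules. Since $\pHom(\cE,-) = \Gamma\bigl(\PP^{1}, \cpHom(\cE,-)\bigr)$, applying the Grothendieck composite-functor machinery to an injective resolution $\cF \to \cI^{\bullet}$ in $\mathsf{P}$ yields a spectral sequence
\[
	E_{2}^{p,q} = \rH^{p}\bigl(\PP^{1}, \cExt^{q}(\cE, \cF)\bigr)
	\;\Longrightarrow\; \Ext^{p+q}(\cE, \cF).
\]
Because $\PP^{1}$ is a curve, the $p$-row of $E_{2}$ vanishes for $p \ge 2$, so this is in fact a short three-term sequence in each total degree.

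The key step is then to show $\cExt^{q}(\cE, \cF) = 0$ for all $q > 0$ whenever $\cE$ is an honest parabolic \emph{bundle}. This is the parabolic analogue of the vanishing $\cExt^{q}(E,-) = 0$ for $E$ locally free, and it is a local statement. Away from the parabolic points $p^{i}$, the flag data in $\cE$ is trivial and a parabolic $\cO$-module is just an ordinary $\cO$-module, so $\cpHom(\cE, -)$ reduces to ordinary $\cHom(E,-)$, which is exact because $E$ is locally free. At a parabolic point, one picks a local trivialization of $E$ compatible with the flag $W_{\bullet}^{i}$; the stalk of $\cpHom(\cE, -)$ then becomes a finite direct sum of functors of the form ``take the submodule determined by the filtration weight condition $a_{j}^{i} > b_{k+1}^{i}$,'' and each of these functors is exact on $\mathsf{P}$ because the parabolic filtration of any object in $\mathsf{P}$ is part of the data. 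Hence $\cpHom(\cE, -)$ is locally exact at $\cE$, which is precisely the vanishing of $\cExt^{q}(\cE, \cF)$ for $q>0$.

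With this vanishing, the spectral sequence collapses on its bottom row and gives the claimed isomorphism $\Ext^{i}(\cE, \cF) \cong \rH^{i}\bigl(\cpHom(\cE, \cF)\bigr)$ for all $i$.

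The main obstacle is the local vanishing $\cExt^{q}(\cE, \cF) = 0$ at a parabolic point, because one has to be careful about what ``locally free'' means inside the parabolic category $\mathsf{P}$: the stalk of a parabolic bundle at a parabolic point is not simply a free module but carries the flag plus weight data, and one must exhibit the stalk of $\cpHom(\cE,-)$ as an exact functor by writing it, after a compatible local trivialization, as a direct sum of ``truncation by weight'' functors which are visibly exact. Once this local computation is carried out (it is essentially the content of \cite[\S 3]{Yok95}), the rest of the argument is formal.
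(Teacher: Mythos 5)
The paper offers no proof of this lemma: it is imported verbatim from Yokogawa \cite[Theorem 3.6]{Yok95}, so there is no in-paper argument to compare against. Your route is the standard (and, as far as the source goes, the intended) one: run the Grothendieck composite-functor spectral sequence for $\Gamma\circ\cpHom(\cE,-)$ and collapse it using the local exactness of $\cpHom(\cE,-)$ when $\cE$ is a genuine parabolic bundle. Two points are worth tightening. First, you invoke the spectral sequence $E_2^{p,q}=\rH^p(\PP^1,\cExt^q(\cE,\cF))\Rightarrow \Ext^{p+q}(\cE,\cF)$ without checking its hypothesis: one must verify that $\cpHom(\cE,\cI)$ is $\Gamma$-acyclic (e.g.\ flasque, or injective as an $\cO_{\PP^1}$-module) for every injective object $\cI$ of $\mathsf{P}$; this is not automatic from the definition of $\mathsf{P}$ and is precisely the technical content that Yokogawa supplies before stating his Theorem 3.6. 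Second, your local vanishing argument at a parabolic point is stated loosely: the clean way to phrase it is that a parabolic bundle with full flags locally splits as a direct sum of rank-one parabolic line bundles with a single weight each, and $\cpHom$ out of such a piece is a weight-shift of the filtration, which is exact because exactness in $\mathsf{P}$ is tested levelwise on the filtration. With those two verifications made explicit, the argument is complete and agrees with the cited proof.
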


\begin{lemma}[\protect{\cite[Lemma 1.4]{Yok95}}]\label{lem:parabolicextension}
The cohomology $\Ext^{1}(\cE, \cF)$ parametrizes isomorphism classes of parabolic extensions, which are exact sequences $0 \to \cF \to \cG \to \cE \to 0$ in $\mathsf{P}$.
\end{lemma}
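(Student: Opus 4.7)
The plan is to mimic the classical Yoneda correspondence between $\Ext^{1}$ and extensions, which works in any abelian category with enough injectives. Since Yokogawa's category $\mathsf{P}$ is abelian and has enough injectives (as used in the statement of Lemma \ref{lem:derivedfunctor}), the entire classical machinery transfers, and the task reduces to writing down the two mutually inverse assignments in the parabolic setting.

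First I would construct the forward map. Given an extension $0 \to \cF \to \cG \to \cE \to 0$ in $\mathsf{P}$, apply the left exact functor $\pHom(\cE, -)$ and use its right derived functors to obtain the long exact sequence
\[
	0 \to \pHom(\cE, \cF) \to \pHom(\cE, \cG) \to \pHom(\cE, \cE) \xrightarrow{\delta} \Ext^{1}(\cE, \cF) \to \cdots
\]
Then send the class of the extension to $\delta(\id_{\cE}) \in \Ext^{1}(\cE, \cF)$. A routine diagram chase shows that isomorphic extensions produce the same class, so the assignment descends to isomorphism classes.

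Next I would construct the inverse map. Fix an injective resolution $0 \to \cF \to I^{0} \xrightarrow{d} I^{1} \to \cdots$ in $\mathsf{P}$. By Lemma \ref{lem:derivedfunctor} applied to the resolution, a class $\xi \in \Ext^{1}(\cE, \cF)$ is represented by a parabolic morphism $\varphi : \cE \to \ker(I^{1} \to I^{2})/\im d$, well-defined modulo maps factoring through $\im d$. Lift $\varphi$ via the surjection from $\ker(I^{1} \to I^{2})$ and form the pullback
\[
	\cG := \cE \times_{\ker(I^{1} \to I^{2})/\im d} (I^{0}/\cF \text{ pulled back appropriately}),
\]
more cleanly described as the fibered pushout/pullback producing a short exact sequence $0 \to \cF \to \cG \to \cE \to 0$ in $\mathsf{P}$. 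Independence of the choices of representative and of lift follows by comparing two candidate extensions via the universal property, exactly as in the module case.

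Finally I would verify that the two assignments are mutually inverse. One direction follows by applying the connecting map to the tautological identity associated to the pullback extension; the other is a diagram chase confirming that starting from an extension, extracting $\delta(\id_{\cE})$, and rebuilding the pullback yields the original $\cG$ up to isomorphism. The main (and essentially only) technical point is to check that all the standard constructions — pullbacks, pushouts, lifts along surjections onto injectives — are performed inside the abelian category $\mathsf{P}$ and respect the parabolic structures at each $p^{i}$; this follows from $\mathsf{P}$ being abelian and the injective objects being injective in $\mathsf{P}$. No additional input beyond this is needed, which is why Yokogawa's statement is essentially a citation of the standard Yoneda theorem transplanted to parabolic sheaves.
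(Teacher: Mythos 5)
The paper offers no proof of this lemma at all: it is quoted verbatim from Yokogawa \cite[Lemma 1.4]{Yok95}, so there is nothing internal to compare against. Your argument is the standard identification of the derived-functor $\Ext^{1}$ with the Yoneda group of extension classes, valid in any abelian category with enough injectives, and since $\mathsf{P}$ is such a category this is exactly the right (and presumably Yokogawa's own) route. One notational slip to fix in your inverse map: by exactness of the injective resolution, $\ker(I^{1}\to I^{2})=\im d^{0}$, so the object $\ker(I^{1}\to I^{2})/\im d$ is zero; the quotient must be taken of Hom-groups, namely $\Ext^{1}(\cE,\cF)=\Hom(\cE,\ker d^{1})/d^{0}_{*}\Hom(\cE,I^{0})$. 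The cleaner formulation is to use the isomorphism $\ker d^{1}\cong I^{0}/\cF$, represent a class by $\varphi:\cE\to I^{0}/\cF$ modulo maps lifting to $I^{0}$, and define $\cG$ as the pullback of $0\to\cF\to I^{0}\to I^{0}/\cF\to 0$ along $\varphi$; the rest of your verification then goes through as stated. Two further points worth being explicit about: the lemma parametrizes equivalence classes of extensions (isomorphisms of $\cG$ commuting with the identity on $\cE$ and $\cF$), not isomorphism classes of the middle object; and the extension lives in $\mathsf{P}$, so you need not (and should not) argue that $\cG$ is again a parabolic bundle.
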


Also we have `Serre duality':

\begin{lemma}[\protect{\cite[Proposition 3.7]{Yok95}}]\label{lem:Serreduality}
\[
	\Ext^{1-i}(\cE, \cF \otimes \cO_{\PP^{1}}(n-2)) \cong
	\rH^{i}(\cspHom(\cF, \cE))^{*}.
\]
\end{lemma}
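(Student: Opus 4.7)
The plan is to deduce parabolic Serre duality from classical Serre duality on $\PP^{1}$ together with a sheaf-level duality between $\cpHom$ and $\cspHom$. First, using Lemma \ref{lem:derivedfunctor}, I rewrite
\[
	\Ext^{1-i}(\cE, \cF \otimes \cO_{\PP^{1}}(n-2)) \cong
	\rH^{1-i}\bigl(\cpHom(\cE, \cF) \otimes \cO_{\PP^{1}}(n-2)\bigr),
\]
and then apply classical Serre duality on $\PP^{1}$ (using $K_{\PP^{1}} = \cO_{\PP^{1}}(-2)$) to obtain an isomorphism with $\rH^{i}(\cpHom(\cE,\cF)^{\vee} \otimes \cO_{\PP^{1}}(-n))^{*}$. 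Everything then reduces to producing a natural isomorphism of locally free sheaves
\[
	\cspHom(\cF, \cE) \cong \cpHom(\cE, \cF)^{\vee} \otimes \cO_{\PP^{1}}(-D),
	\qquad D := p^{1} + \cdots + p^{n},
\]
so that the $\cO_{\PP^{1}}(-n)$ twist matches $\cO_{\PP^{1}}(-D)$.

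To construct this isomorphism I use a trace pairing. Composition of sheaf maps gives
\[
	\cpHom(\cE, \cF) \otimes \cspHom(\cF, \cE)
	\longrightarrow \cspHom(\cE, \cE),
\]
because the composition of a parabolic morphism with a strongly parabolic one is again strongly parabolic. At each parabolic point $p^{i}$, a strongly parabolic endomorphism of $\cE$ is strictly upper-triangular with respect to the flag $W_{\bullet}^{i}$, hence traceless at $p^{i}$. Consequently the ordinary trace $\cEnd(E) \to \cO_{\PP^{1}}$ restricts on $\cspHom(\cE, \cE)$ to a map that vanishes along $D$, and so factors as $\cspHom(\cE, \cE) \to \cO_{\PP^{1}}(-D)$. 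Combining these two operations yields the desired pairing
\[
	\cpHom(\cE, \cF) \otimes \cspHom(\cF, \cE)
	\longrightarrow \cO_{\PP^{1}}(-D).
\]

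The key claim is that this pairing is perfect. Both sheaves are locally free of rank $\rk E \cdot \rk F$, and away from $\bp$ the pairing reduces to the classical non-degenerate trace pairing on $\cHom(E, F) \otimes \cHom(F, E) \to \cO_{\PP^{1}}$, so perfectness is automatic on the open complement of $\bp$. The main obstacle is the local verification at each parabolic point: in a basis adapted to the two flags, $\cpHom_{p^{i}}$ and $\cspHom_{p^{i}}$ are spanned by complementary families of matrix units determined by the inequalities $a_{j}^{i} > b_{k+1}^{i}$ and $b_{k}^{i} \ge a_{j+1}^{i}$ respectively, and one must check that the trace pairing interchanges these families exactly up to the order of vanishing at $p^{i}$ which absorbs the twist by $\cO_{\PP^{1}}(-D)$. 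This is a finite linear-algebra check whose bookkeeping is the only delicate step. Once perfectness is established, dualizing gives $\cspHom(\cF, \cE) \cong \cpHom(\cE, \cF)^{\vee}\otimes \cO_{\PP^{1}}(-D)$, and substituting into the Serre duality computation above yields the stated identification of $\Ext^{1-i}(\cE, \cF \otimes \cO_{\PP^{1}}(n-2))$ with $\rH^{i}(\cspHom(\cF, \cE))^{*}$.
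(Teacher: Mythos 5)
The paper offers no proof of this lemma; it is quoted verbatim from Yokogawa \cite[Proposition 3.7]{Yok95}, so there is no in-paper argument to compare against. Your reconstruction is the standard proof of that cited result, and it is sound: reduce to ordinary Serre duality on $\PP^{1}$ via Lemma \ref{lem:derivedfunctor} and the compatibility $\cpHom(\cE,\cF\otimes L)\cong\cpHom(\cE,\cF)\otimes L$ for a line bundle $L$, then exhibit $\cspHom(\cF,\cE)$ as $\cpHom(\cE,\cF)^{\vee}\otimes\cO_{\PP^{1}}(-D)$ by a residue/trace pairing. Two points deserve emphasis so that the "bookkeeping" you defer actually closes. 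First, your claim that strongly parabolic endomorphisms are nilpotent along the flag (hence traceless at each $p^{i}$) hinges on reading the paper's parenthetical convention correctly: \emph{strongly} parabolic corresponds to the non-strict inequality $a_{j}^{i}\ge b_{k+1}^{i}$, which for an endomorphism with strictly decreasing weights forces $f(W_{j}^{i})\subset W_{j-1}^{i}$; with the opposite reading one only gets flag-preservation and the argument collapses. Second, the local perfectness at $p^{i}$ does work out: in adapted bases the matrix unit $E_{mj}$ lies in the fiber of $\cpHom(\cE,\cF)$ exactly when $a_{j}^{i}\le b_{m}^{i}$, while $E_{jm}$ lies in the fiber of $\cspHom(\cF,\cE)$ exactly when $a_{j}^{i}>b_{m}^{i}$; these conditions are genuinely complementary, and with a local coordinate $t$ at $p^{i}$ the trace pairs $E_{mj}$ with $tE_{jm}$ and $tE_{mj}$ with $E_{jm}$, each landing in $t\cO_{p^{i}}$ with unit coefficient, which is precisely perfectness into $\cO_{\PP^{1}}(-D)$. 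With those two checks made explicit the proof is complete.
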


As in the case of the moduli space of ordinary sheaves, if $\cE \in \rM_{\bp}(r, d, \ba)$ is $\mathbf{a}$-stable, then the Zariski tangent space of $\rM_{\bp}(r, d, \ba)$ at $[\cE]$ is $\Ext^{1}(\cE, \cE)$, and if $\Ext^{2}(\cE, \cE) = 0$, then the moduli space is smooth at $[\cE]$ (\cite[Theorem 2.4]{Yok95}). Since $\Ext^{2}(\cE, \cE) \cong \rH^{2}(\cpHom(\cE, \cE))$ is an ordinary sheaf cohomology on a curve, it vanishes. Thus at $[\cE]$, the moduli space is smooth. 

\begin{proposition}\label{prop:smooth}
If $\ba$ is a general parabolic weight, then $\rM_{\bp}(r, d, \ba)$ is a smooth variety.
\end{proposition}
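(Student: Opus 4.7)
The plan is essentially to assemble the ingredients the authors have already laid out in the paragraph preceding the proposition, so the proof will be short.

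First, I would invoke the hypothesis that $\ba$ is general: by definition this means $\ba$-semistability coincides with $\ba$-stability, so every closed point $[\cE] \in \rM_{\bp}(r,d,\ba)$ is represented by an honest $\ba$-stable parabolic bundle $\cE$ (no nontrivial S-equivalence classes). Therefore, to prove smoothness it suffices to check smoothness at each such $[\cE]$.

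Next, I would recall from Yokogawa's deformation theory (cited as \cite[Theorem 2.4]{Yok95} in the paragraph above the proposition) that at a stable point $[\cE]$ the Zariski tangent space is canonically $\Ext^{1}(\cE,\cE)$, and the obstruction space lies in $\Ext^{2}(\cE,\cE)$. Hence it is enough to verify $\Ext^{2}(\cE,\cE)=0$ for every $\ba$-stable parabolic bundle $\cE$.

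The vanishing is immediate from Lemma \ref{lem:derivedfunctor}: $\Ext^{2}(\cE,\cE) \cong \rH^{2}(\cpHom(\cE,\cE))$. Since $\cpHom(\cE,\cE)$ is a coherent sheaf on the curve $\PP^{1}$, and coherent cohomology on a smooth projective curve vanishes in degrees $\ge 2$ by Grothendieck's vanishing theorem, this group is zero. Thus the obstruction vanishes at every closed point, so $\rM_{\bp}(r,d,\ba)$ is smooth.

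There is essentially no obstacle here; the only subtle point is the appeal to generality of $\ba$ to guarantee that every point of $\rM_{\bp}(r,d,\ba)$ actually comes from a stable (rather than merely polystable) object, so that the deformation-theoretic description of the tangent and obstruction spaces applies uniformly. Everything else reduces to the already stated comparison between parabolic $\Ext$ and sheaf cohomology on $\PP^{1}$, together with dimension-one vanishing.
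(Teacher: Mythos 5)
Your proof is correct and is essentially identical to the paper's own argument, which appears in the paragraph immediately preceding the proposition: generality of $\ba$ reduces to stable points, Yokogawa's deformation theory identifies the tangent and obstruction spaces, and $\Ext^{2}(\cE,\cE)\cong \rH^{2}(\cpHom(\cE,\cE))$ vanishes because it is sheaf cohomology on a curve. No gaps.
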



\section{Small weight case}\label{sec:smallweight}

When a parabolic weight $\ba$ is sufficiently `small', $\rM_{\bp}(r, 0, \ba)$ can be constructed as an elementary GIT quotient. This section is devoted to the study of such small weight case.

\subsection{Moduli of parabolic bundles and GIT quotient}

Let $\pi_{i} : \Fl(V)^{n} \to \Fl(V)$ be the projection to the $i$-th factor. Then any line bundle on $\Fl(V)^{n}$ can be described as a restriction of $L_{\bfb} := \otimes \pi_{i}^{*}\cO(b_{\bullet}^{i})$. Note that there is a natural diagonal $\SL_{r}$-action on $\Fl(V)^{n}$. The GIT stability with respect to $L_{\bfb}$ is well-known:

\begin{theorem}[\protect{\cite[Theorem 11.1]{Dol03}}]\label{thm:GITstability}
A point $(W_{\bullet}^{i}) \in \Fl(V)^{n}$ is (semi)-stable with respect to $L_{\bfb}$ if and only if for every proper $s$-dimensional subspace $V' \subset V$, the following inequality holds:
\begin{equation}\label{eqn:GITstability}
	\frac{1}{s}\sum_{i=1}^{n}\sum_{j=1}^{r-1}b_{j}^{i}\dim (W_{j}^{i}
	\cap V') (\le) <
	\frac{1}{r}\left(\sum_{i=1}^{n}\sum_{j=1}^{r-1}jb_{j}^{i}\right).
\end{equation}
\end{theorem}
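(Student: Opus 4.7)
The plan is to apply the Hilbert--Mumford numerical criterion to the diagonal $\SL_r$-action on $\Fl(V)^n$ linearized by $L_{\bfb}$. Recall that a point $x = (W_{\bullet}^{i})$ is (semi)stable with respect to $L_{\bfb}$ if and only if for every nontrivial one-parameter subgroup $\lambda : \mathbb{G}_m \to \SL_r$ the Mumford weight $\mu^{L_{\bfb}}(x, \lambda)$ is $(\leq)\, <0$. Up to conjugation, any such $\lambda$ has the form $\lambda(t) = \mathrm{diag}(t^{r_1}, \ldots, t^{r_r})$ with $r_1 \geq r_2 \geq \cdots \geq r_r$ and $\sum r_k = 0$ in a basis $e_1, \ldots, e_r$ of $V$; this $\lambda$ equips $V$ with the increasing filtration $V_k := \mathrm{span}(e_1, \ldots, e_k)$.

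The first step will be to compute $\mu^{\cO(b_{\bullet}^{i})}(W_{\bullet}^{i}, \lambda)$ for a single flag factor. The line bundle $\cO(b_{\bullet}^{i})$ is the tensor product $\bigotimes_{j=1}^{r-1} \pi_{j}^{*}\cO(b_{j}^{i})$ of pulled-back Pl\"ucker line bundles from $\Gr(j, V)$, and on the Grassmannian factor $\Gr(j, V)$ the standard weight calculation at the limit point $\lim_{t \to 0} \lambda(t) W_{j}^{i}$ gives the Mumford weight as a sum of the $r_k$ indexed by the jumping positions of $W_j^i$ relative to the filtration $V_{\bullet}$. Summing telescopically across $j$ and across the $n$ factors, one obtains the identity
\[
	\mu^{L_{\bfb}}(x, \lambda)
	= \sum_{k=1}^{r} r_k \left( \sum_{i=1}^{n}\sum_{j=1}^{r-1} b_{j}^{i}\bigl(\dim(W_{j}^{i}\cap V_{k}) - \dim(W_{j}^{i}\cap V_{k-1})\bigr) \right).
\]
Reorganizing by Abel summation in the $k$-variable using $s_k := r_k - r_{k+1} \geq 0$ and $r_r = -\sum_{k=1}^{r-1}(r - k)s_k/r$ (to enforce $\sum r_k = 0$), this can be rewritten as a nonnegative linear combination
\[
	\mu^{L_{\bfb}}(x, \lambda) = \sum_{k=1}^{r-1} s_k \cdot \nu_k(x),
\]
where $\nu_k(x) = \sum_{i,j} b_j^i \dim(W_j^i \cap V_k) - \tfrac{k}{r}\sum_{i,j} j b_j^i$ is exactly the defect in inequality \eqref{eqn:GITstability} for the $k$-dimensional subspace $V_k$.

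The second step is the two implications. The ``only if'' direction is immediate: given any $s$-dimensional subspace $V' \subset V$, choose a basis so that $V' = V_s$ and take $\lambda$ with $r_1 = \cdots = r_s = r-s$, $r_{s+1} = \cdots = r_r = -s$; then $\mu^{L_{\bfb}}(x, \lambda) = r\cdot \nu_s(x)$, so (semi)stability of $x$ forces \eqref{eqn:GITstability}. For the ``if'' direction, assume \eqref{eqn:GITstability} holds for every proper subspace, and given an arbitrary $\lambda$, choose a basis adapted to $\lambda$ and apply the hypothesis to each $V_k$ for $1 \leq k \leq r-1$; by the displayed decomposition, $\mu^{L_{\bfb}}(x,\lambda)$ is a nonnegative combination of $\nu_k(x) \,(\leq)\, <0$, and is strictly negative in the stable case provided some $s_k > 0$, which holds for any nontrivial $\lambda$.

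The main obstacle is the first step: matching the standard Borel--Weil description of $\cO(b_{\bullet}^{i})$ as a tensor product of Pl\"ucker bundles with a clean formula for the Mumford weight in terms of the intersection dimensions $\dim(W_j^i \cap V_k)$, and then carrying out the Abel-type resummation that exhibits $\mu^{L_{\bfb}}(x, \lambda)$ as a positive combination of the $\nu_k(x)$'s. Once this bookkeeping is in place, the equivalence with \eqref{eqn:GITstability} is formal, and the reduction from arbitrary 1-PS to those coming from a single subspace is built into the positivity of the coefficients $s_k$.
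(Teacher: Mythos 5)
The paper offers no proof of this statement — it is quoted verbatim from Dolgachev \cite[Theorem 11.1]{Dol03} — and the proof in that reference is exactly the Hilbert--Mumford computation you carry out, so your argument matches the cited source's approach and is correct. One small algebra slip: enforcing $\sum_{k}r_{k}=0$ with $r_{k}=r_{r}+\sum_{l\ge k}s_{l}$ gives $r_{r}=-\frac{1}{r}\sum_{k=1}^{r-1}k\,s_{k}$, not $-\frac{1}{r}\sum_{k=1}^{r-1}(r-k)s_{k}$; with the corrected value the Abel summation does produce your displayed decomposition $\mu^{L_{\bfb}}(x,\lambda)=\sum_{k=1}^{r-1}s_{k}\,\nu_{k}(x)$, so the conclusion is unaffected.
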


As in the case of parabolic stability, we define some terminology related to weight data. 

\begin{definition}
Let $L_{\bfb}$ be a linearization on $\Fl(V)^{n}$. 
\begin{enumerate}
\item $L_{\bfb}$ is \emph{effective} if $(\Fl(V)^{n})^{ss}(L_{\bfb}) \ne \emptyset$.
\item $L_{\bfb}$ is \emph{general} if $(\Fl(V)^{n})^{ss}(L_{\bfb}) = (\Fl(V)^{n})^{s}(L_{\bfb})$.
\end{enumerate}
\end{definition}

For a parabolic weight $\ba$, let $\bd = (d_{\bullet}^{1}, d_{\bullet}^{2}, \cdots, d_{\bullet}^{n})$ be a new collection of sequences defined by $d_{j}^{i} = a_{j}^{i} - a_{j+1}^{i}$. $\bd$ is called the associated \emph{difference data}. For $\bd$, let $|\bd|_{j} := \sum_{i=1}^{n}d_{j}^{i}$ and $|\bd| = \sum_{j=1}^{r-1}|\bd|_{j}$. Then $|\ba|_{j} = \sum_{k=j}^{r-1}|\bd|_{k}$ and $|\ba| = \sum_{j=1}^{r-1}j|\bd|_{j}$.

\begin{theorem}\label{thm:smallweightGIT}
Let $\ba$ be a parabolic weight and $\bd$ be the associated difference data. Suppose that $\ba$ is sufficiently small in the sense that
\begin{equation}\label{eqn:smallweight}
	\sum_{j=1}^{s}j(r-s)|\bd|_{j} + \sum_{j=s+1}^{r-1}s(r-j)|\bd|_{j} \le r
\end{equation}
for all $1 \le s \le r-1$. Then $\rM_{\bp}(r, 0, \ba) \cong \Fl(V)^{n}\git_{L_{\bd}}\SL_{r}$, where $L_{\bd} := \otimes \pi_{i}^{*}\cO(d_{\bullet}^{i})$.
\end{theorem}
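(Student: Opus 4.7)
The plan is to directly compare parabolic (semi)stability for bundles with trivial underlying bundle to the GIT (semi)stability of Theorem~\ref{thm:GITstability}. First, for a trivial subbundle $V'\otimes\cO_{\PP^{1}}\subset\cO_{\PP^{1}}^{r}$ of dimension $s$, the induced parabolic weight $\bfb$ has entries $b_{k}^{i}=a_{\ell(k,i)}^{i}$ where $\ell(k,i)=\min\{\ell:\dim(W_{\ell}^{i}\cap V')=k\}$. Using $a_{j}^{i}=\sum_{m\ge j}d_{m}^{i}$ and summation by parts (grouping the contribution of each $d_{m}^{i}$ by the number of indices $k$ with $\ell(k,i)\le m$, which equals $\dim(W_{m}^{i}\cap V')$), one obtains
\[
	|\bfb| \;=\; \sum_{i=1}^{n}\sum_{m=1}^{r-1}d_{m}^{i}\dim(W_{m}^{i}\cap V').
\]
Combined with $|\ba|=\sum_{m}m|\bd|_{m}$, the parabolic inequality $r|\bfb|\,(\le)<\,s|\ba|$ is exactly the GIT inequality (\ref{eqn:GITstability}) for the linearization $L_{\bd}$. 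Moreover, every subbundle of $\cO_{\PP^{1}}^{r}$ is automatically trivial: a rank-$s$ subbundle $F$ satisfies $\deg F\le 0$, while the trivial hull $H^{0}(F)\otimes\cO_{\PP^{1}}\hookrightarrow F$ has a cokernel that is simultaneously torsion (since it vanishes at the generic point) and of nonpositive degree, forcing equality. Hence parabolic (semi)stability of $(\cO_{\PP^{1}}^{r},\{W_{\bullet}^{i}\},\ba)$ is equivalent to GIT (semi)stability of $(W_{\bullet}^{i})\in\Fl(V)^{n}$ with respect to $L_{\bd}$.

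Next, I would show that condition (\ref{eqn:smallweight}) forces every $\ba$-semistable parabolic bundle to have trivial underlying bundle. Suppose $E\not\cong\cO_{\PP^{1}}^{r}$; by Grothendieck's classification there is a subbundle $G\subset E$ of some rank $\tilde s$ with $\deg G\ge 1$. For any flag structure the induced weight $\bfb_{G}$ satisfies $|\bfb_{G}|\ge|\bfb|_{\min}(\tilde s):=\sum_{k=1}^{\tilde s}\sum_{i}a_{r-\tilde s+k}^{i}$, the minimum occurring in generic position. A direct manipulation (using $a_{j}^{i}=\sum_{m\ge j}d_{m}^{i}$ again) yields
\[
	\tilde s|\ba|-r|\bfb|_{\min}(\tilde s) \;=\; \sum_{j=1}^{r-\tilde s}j\tilde s|\bd|_{j}+\sum_{j=r-\tilde s+1}^{r-1}(r-\tilde s)(r-j)|\bd|_{j},
\]
which is the left-hand side of (\ref{eqn:smallweight}) with index $s$ replaced by $r-\tilde s$. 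The hypothesis then gives $\tilde s|\ba|-r|\bfb_{G}|\le r\le r\deg G$, i.e.\ $\mu(\cG)\ge\mu(\cE)$, contradicting $\ba$-stability. When equality is attained, $\cG$ is a JH sub-object and iterating on the graded pieces reduces the S-equivalence class of $\cE$ to one with trivial underlying bundle.

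Combining the two steps produces a natural bijection between S-equivalence classes on either side. To upgrade to a scheme-theoretic isomorphism, I would restrict the standard GIT construction of $\rM_{\bp}(r,0,\ba)$ (a quotient of a Quot-scheme with flag data by $\GL$) to the open substack $\{E\cong\cO_{\PP^{1}}^{r}\}$ established above; there the parameter space becomes a $\GL_{r}$-torsor over $\Fl(V)^{n}$ whose natural linearization pulls back from $L_{\bd}$ (up to twisting by the trivial central character), after which the isomorphism is formal. The main obstacle I expect is the non-strict inequality in (\ref{eqn:smallweight}): strictly semistable parabolic bundles with non-trivial $E$ can in principle occur on the boundary, and one must verify carefully that the JH graded object of such an $\cE$ is a sum of parabolic bundles whose underlying bundles are themselves trivial, so that the S-equivalence matching between the two moduli spaces is exact.
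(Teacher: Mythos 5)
There is a genuine gap, and it sits at the heart of where the hypothesis \eqref{eqn:smallweight} must be used. Your claim that ``every subbundle of $\cO_{\PP^{1}}^{r}$ is automatically trivial'' is false: for instance $\cO(-1)\hookrightarrow\cO^{2}$ via a pair of sections of $\cO(1)$ with no common zero is a saturated rank-one subbundle of negative degree, and more generally any subbundle of $\cO^{r}$ is a sum $\oplus\cO(a_{i})$ with $a_{i}\le 0$ but not necessarily all zero. Your justification fails because $H^{0}(F)\otimes\cO\to F$ need not be generically surjective (for $F=\cO(-1)$ it is the zero map, so the cokernel is not torsion). Consequently your Step 1 only verifies the parabolic inequality against \emph{trivial} subbundles $V'\otimes\cO$, which suffices for the direction ``parabolic semistable $\Rightarrow$ GIT semistable'' but not for the converse. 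A negative-degree subbundle $F$ of rank $s$ can acquire the maximal induced weight $\sum_{j=1}^{s}|\ba|_{j}$ at the parabolic points, giving $\mu(\cF)\le(-1+\sum_{j=1}^{s}|\ba|_{j})/s$, and for large weights this genuinely exceeds $|\ba|/r$ --- this is exactly the wall $\Delta(s,-1,n[s])$ analyzed later in the paper. The paper's proof handles this case explicitly: it rewrites \eqref{eqn:smallweight} as $r\sum_{j=1}^{s}|\ba|_{j}-s|\ba|\le r$ and deduces $(-1+\sum_{j=1}^{s}|\ba|_{j})/s\le|\ba|/r$, so that no negative-degree subbundle destabilizes. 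Your proposal never invokes the smallness hypothesis in Step 1, which is a sign that something is missing; the gap is fillable by exactly this estimate, but as written the equivalence of stabilities is not established.

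Two smaller remarks. First, your Step 2 (semistable $\Rightarrow$ trivial underlying bundle, with the S-equivalence bookkeeping) is essentially correct but is not needed in the paper's argument: after producing the family of $\ba$-semistable bundles over $(\Fl(V)^{n})^{ss}(L_{\bd})$ and the induced map $\bar{\pi}:\Fl(V)^{n}\git_{L_{\bd}}\SL_{r}\to\rM_{\bp}(r,0,\ba)$, the paper concludes by observing that $\bar{\pi}$ is an injective morphism between normal varieties of the same dimension with irreducible target, hence an isomorphism; the triviality of underlying bundles of all semistable objects then comes for free. This is cleaner than matching S-equivalence classes on both sides and avoids the delicate Jordan--H\"older analysis you flag at the end of your proposal.
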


\begin{proof}
It is straightforward to check that \eqref{eqn:smallweight} is equivalent to $r\sum_{j=1}^{s}|\ba|_{j} - s|\ba| \le r$.

Let $X = (\Fl(V)^{n})^{ss}(L_{\bd})$. Consider a family of parabolic bundles $\cE$ over $X$ by taking the trivial bundle, the restriction of the universal flag, and the parabolic weight $\ba$. We claim that this is a family of $\ba$-semistable parabolic bundles. Let $\cE_{x} = (E = V \otimes \cO, \{W_{\bullet}^{i}\}, \ba)$ be the fiber over $x \in X$.

Let $F$ be a rank $s$ subbundle of $E$ whose topological degree is negative. Let $\cF$ be the induced parabolic subbundle. Then $\mu(\cF) \le (-1 +\sum_{j=1}^{s}|\ba|_{j})/s \le |\ba|/r = \mu(\cE_{x})$. Thus $\cF$ is not a destabilizing subbundle. Since $E$ does not have a positive degree subbundle, the only possible destabilizing subbundle is of degree zero. This subbundle must be trivial because it cannot have any positive degree factor.

Suppose that $\cF = (V' \otimes \cO, \{{W'}_{\bullet}^{i}\},\bfb)$ is a rank $s$ parabolic subbundle induced by taking an $s$-dimensional subspace $V' \subset V$. Then
\[
\begin{split}
	\mu(\cF) &= \frac{1}{s}\sum_{i=1}^{n}\sum_{j=1}^{r}a_{j}^{i}
	\dim(W_{j}^{i} \cap F|_{p^{i}}/W_{j-1}^{i} \cap F|_{p^{i}})\\
	&= \frac{1}{s}\sum_{i=1}^{n}\sum_{j=1}^{r}a_{j}^{i}
	\dim(W_{j}^{i} \cap V'/W_{j-1}^{i} \cap V')
	= \frac{1}{s}\sum_{i=1}^{n}\sum_{j=1}^{r-1}d_{j}^{i}
	\dim (W_{j}^{i} \cap V')\\
	&\le
	\frac{1}{r}\left(\sum_{i=1}^{n}\sum_{j=1}^{r-1}jd_{j}^{i}\right) =
	\frac{1}{r}\left(\sum_{i=1}^{n}\sum_{j=1}^{r-1}a_{j}^{i}\right) =
	\mu(\cE_{x}).
\end{split}
\]
The inequality is obtained from Theorem \ref{thm:GITstability}. Therefore $\cE_{x}$ is semistable.

By the universal property, there is an $\SL_{r}$-invariant morphism $\pi : X \to \rM_{\bp}(r, 0, \ba)$. Therefore we have the induced map $\bar{\pi} : \Fl(V)^{n}\git_{L_{\bd}}\SL_{r} \to \rM_{\bp}(r, 0, \ba)$. It is straightforward to check that $\bar{\pi}$ is set-theoretically injective. Since it is an injective map between two normal varieties with the same dimension and the target space is irreducible, it is an isomorphism.
\end{proof}

\subsection{Picard group of GIT quotient}

For the later use, we compute the Picard group of the GIT quotient.

\begin{proposition}\label{prop:maxPicnumGIT}
Suppose that $n > 2r$. There is a general linearization $L_{\bfb} = \otimes \pi_{i}^{*}\cO(b_{\bullet}^{i})$ such that $\mathrm{Pic}(\Fl(V)^{n}\git_{L_{\bfb}}\SL_{r})$ is naturally identified with an index $r$ sublattice of $\Pic(\Fl(V)^{n}) \cong \ZZ^{(r-1)n}$. In particular, a general linearization which is very close to the symmetric linearization $L_{\ba}$ with $a_{j}^{i} \equiv 1$ has the property. 
\end{proposition}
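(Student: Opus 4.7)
The plan is to apply Kempf's descent lemma to the $\SL_r$-action on $X = \Fl(V)^n$, exploiting that the center $\mu_r \subset \SL_r$ acts trivially, so that every stabilizer contains $\mu_r$ and the descent condition reduces to a single congruence modulo $r$.

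First I would fix a small generic perturbation $L_\bfb$ of the symmetric linearization $L_{(1,\ldots,1)}$. The symmetric weight is manifestly effective (the inequality in Theorem \ref{thm:GITstability} is strict for, e.g., flags in general linear position when $n > 2r$), and the GIT wall-and-chamber structure is locally polyhedral, so after an arbitrarily small perturbation we may assume $\bfb$ lies in the interior of an open chamber, giving $X^{ss}(L_\bfb) = X^s(L_\bfb)$. Using the hypothesis $n > 2r$ together with the Hilbert--Mumford criterion applied to one-parameter subgroups of $\SL_r$, one checks that the unstable locus $X \setminus X^{ss}(L_\bfb)$ has codimension at least two in the smooth variety $X$.

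Second I would verify that $\mathrm{PGL}_r$ acts generically freely on $X^s(L_\bfb)$, with the non-free locus of codimension at least two. A direct dimension count for the incidence $\{(g, (W_\bullet^i)) \in \mathrm{PGL}_r \times \Fl(V)^n : g \cdot W_\bullet^i = W_\bullet^i \text{ for all } i\}$ shows that for $n \geq 3$ and $r \geq 2$ a generic $n$-tuple of flags has trivial $\mathrm{PGL}_r$-stabilizer, and the stabilizer-jumping loci are of high codimension once $n > 2r$. Since $\SL_r$ is simply connected with trivial character group, every line bundle on $X$ admits a unique $\SL_r$-linearization, so $\Pic^{\SL_r}(X) = \Pic(X) \cong \ZZ^{(r-1)n}$, and by the codimension-two bound $\Pic^{\SL_r}(X^{ss}) = \Pic^{\SL_r}(X)$.

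Third, by Kempf's descent lemma, an equivariant line bundle $L_\bfc$ descends to the geometric quotient $Y := X\git_{L_\bfb} \SL_r$ if and only if, at every closed stable orbit, the stabilizer acts trivially on the fiber. Since $\mu_r$ is contained in every stabilizer and the locus of stable points with larger stabilizer has codimension at least two in $X^s$ (so its image in $Y$ has codimension at least two in the normal variety $Y$), a line bundle on the free locus extends to $Y$, and the descent condition collapses to the single requirement that $\mu_r$ act trivially on the fiber at a generic free-stabilizer orbit.

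Finally, the computation of the character: on $\Gr(j,V)$ the line bundle $\cO(1) = (\det S_j)^{\vee}$ where $S_j \subset V \otimes \cO$ is the tautological subbundle, so a scalar $\zeta \in \mu_r$ acts on the fiber of $\cO(1)$ by $\zeta^{-j}$. Pulling back and tensoring gives the $\mu_r$-weight of $L_\bfc = \bigotimes \pi_i^*\cO(c_\bullet^i)$ equal to $\zeta \mapsto \zeta^{-\sum_{i,j} j c_j^i}$. The descent condition is therefore $\sum_{i,j} j c_j^i \equiv 0 \pmod{r}$, which cuts out an index-$r$ sublattice of $\Pic(X) \cong \ZZ^{(r-1)n}$ and is identified with $\Pic(Y)$. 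The main technical obstacle I expect is step two: controlling the codimension of the non-free locus of the $\mathrm{PGL}_r$-action inside $X^s(L_\bfb)$, which is where the hypothesis $n > 2r$ enters essentially.
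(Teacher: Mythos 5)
Your overall strategy coincides with the paper's: establish that the unstable locus has codimension at least two so that $\Pic((\Fl(V)^{n})^{s}(L_{\bfb})) = \Pic(\Fl(V)^{n}) \cong \Pic^{\SL_{r}}$, then apply Kempf's descent lemma and identify the descent condition as a congruence modulo $r$. Your final character computation (the scalar $\zeta$ acts on the fiber of $\bigotimes\pi_{i}^{*}\cO(c_{\bullet}^{i})$ by $\zeta^{-\sum_{i,j}jc_{j}^{i}}$, so descent is the single condition $\sum_{i,j}jc_{j}^{i}\equiv 0 \pmod r$) is a genuine improvement in explicitness: the paper's proof only records that $L^{r}$ always descends, which by itself shows the descent lattice contains $r\cdot\Pic(\Fl(V)^{n})$ but does not pin the index down to $r$; your surjective linear functional onto $\ZZ/r$ does.

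However, your second and third steps contain a genuine gap. You only claim that $\mathrm{PGL}_{r}$ acts \emph{generically} freely on the stable locus, with non-free locus of codimension at least two, and you then propose to descend $L$ over the free locus and extend the resulting line bundle across a codimension-two subset of the normal quotient $Y$. This fails on two counts. First, Kempf's descent lemma is a condition at \emph{every} closed orbit in the semistable locus, not a generic one; if some stable point had stabilizer strictly larger than $\mu_{r}$, the descent of $L$ would require an additional character condition at that point, and your "single congruence" conclusion would not follow. Second, a line bundle on an open subset of a normal variety whose complement has codimension two extends only as a reflexive rank-one sheaf; extending it as a line bundle requires local factoriality of $Y$, which could fail precisely along the image of the non-free locus (where $Y$ would acquire quotient singularities). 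So the extension maneuver is circular: it works only if the non-free locus is empty, in which case it is unnecessary. The paper closes this with its Lemma 3.4: for \emph{any} stable point $x=(W_{\bullet}^{i})$, an element $A\in\St_{x}$ has finite order, hence is diagonalizable; each $W_{j}^{i}$ splits along the eigenspace decomposition of $V$, so if $A$ had more than one eigenvalue the stabilizer would contain a positive-dimensional torus, contradicting stability. Hence $\St_{x}=\mu_{r}$ everywhere on the stable locus, the non-free locus is empty, and your congruence criterion applies at every closed orbit without any extension argument. I would also note that your first step ("one checks" the codimension bound via Hilbert--Mumford) is where the hypothesis $n>2r$ actually does the work; the paper carries this out by an incidence-variety count over $\Gr(s,V)$ giving codimension at least $\lceil n(r-s)/2\rceil - s(r-s)\ge 2$, and some such estimate needs to be supplied.
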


\begin{proof}
First of all, we show that for the symmetric linearization $L_{\ba}$, the codimension of the non-stable locus $\Fl(V)^{n}\setminus (\Fl(V)^{n})^{ss}(L_{\ba})$ is at least two. If $(W_{\bullet}^{i})$ is not stable, then by \eqref{eqn:GITstability}, there is an $s$-dimensional subspace $V' \subset V$ such that
\begin{equation}\label{eqn:destabilizing}
	\frac{1}{s}\sum_{i=1}^{n}\sum_{j=1}^{r-1}
	\dim (W_{j}^{i}\cap V')\ge\frac{n(r-1)}{2}.
\end{equation}

Let $U_{s} \subset \Fl(V)^{n}$ be the set of flags $(W_{\bullet}^{i})$ which satisfy \eqref{eqn:destabilizing} for some $s$-dimensional subspace $V' \subset V$. Let $\widetilde{U}_{s} \subset \Gr(s, V) \times \Fl(V)^{n}$ be the space of pairs $(V', (W_{\bullet}^{i}))$ such that $(W_{\bullet}^{i})$ satisfies \eqref{eqn:destabilizing} for $V'$. There are two projections $p_{1}:\widetilde{U}_{s} \to \Gr(s, V)$ and $p_{2} : \widetilde{U}_{s} \to \Fl(V)^{n}$. The codimension of each fiber of $p_{1}$ in $\Fl(V)^{n}$ is $\lceil n(r-1)/2 - n(s-1)/2\rceil = \lceil n(r-s)/2\rceil$ because on \eqref{eqn:destabilizing}, the left hand side is $n(s-1)/2$ for a general $(W_{\bullet}^{i})$ and as the value of the left hand side increases by one, the codimension of the locus increases by one, too. Thus the codimension of $\widetilde{U}_{s}$ is at least $\lceil n(r-s)/2\rceil$. On the other hand, because $p_{2}(\widetilde{U}_{s}) = U_{s}$, the codimension of $U_{s}$ is at least $\lceil n(r-s)/2\rceil - s(r-s)$, which is at least two for any $1 \le s \le r-1$. The non-stable locus $\Fl(V)^{n} \setminus (\Fl(V)^{n})^{ss}(L_{\ba})$ is $\cup_{s = 1}^{r-1}U_{s}$, so it is of codimension at least two.

By perturbing the linearization slightly, we can obtain a general linearization $L_{\bfb}$. The unstable locus with respect to $L_{\bfb}$ is contained in the non-stable locus of $L_{\ba}$. Thus it is also of codimension at least two. In particular, $\Pic((\Fl(V)^{n})^{s}(L_{\bfb})) = \Pic(\Fl(V)^{n})$.

Let $\Pic^{\SL_{r}}((\Fl(V)^{n})^{s}(L_{\bfb})$ be the group of linearizations. Then there is an exact sequence
\[
	0 \to \Hom(\SL_{r}, \CC^{*}) \to
	\Pic^{\SL_{r}}((\Fl(V)^{n})^{s}(L_{\bfb})) \stackrel{\alpha}{\to}
	\Pic((\Fl(V)^{n})^{s}(L_{\bfb})) \to \Pic(\SL_{r})
\]
(\cite[Theorem 7.2]{Dol03}). Furthermore, $\Hom(\SL_{r}, \CC^{*}) = \Pic(\SL_{r}) = 0$. Thus $\alpha$ is an isomorphism.

By Kempf's descent lemma (\cite[Theorem 2.3]{DN89}), an $\SL_{r}$-linearized line bundle $L$ on $(\Fl(V)^{n})^{s}(L_{\bfb})$ descends to $\Fl(V)^{n}\git_{L_{\bfb}}\SL_{r}$ if and only if for every closed orbit $\SL_{r}\cdot x$, the stabilizer $\St_{x}$ acts on $L_{x}$ trivially. Lemma \ref{stabilizer} below tells us that for any $L \in \Pic(\Fl(V)^{n})$, $L^{r}$ descends to $\Fl(V)^{n}\git_{L_{\bfb}}\SL_{r}$.
\end{proof}

\begin{lemma}\label{stabilizer}
Let $x = (W_{\bullet}^{i})$ be a stable point on $\Fl(V)^{n}$ with respect to some linearization. Then $\St_{x}$ is isomorphic to the group of $r$-th root of unity.
\end{lemma}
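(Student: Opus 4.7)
The plan is to prove $\St_x = \mu_r$ by double inclusion. The inclusion $\mu_r \subseteq \St_x$ is immediate, since the center of $\SL_r$ acts on $V$ by scalars, hence preserves every subspace and fixes every point of $\Fl(V)^n$.

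For the reverse inclusion, the key input is that GIT stability of $x$ forces $\St_x$ to be a finite group, so any $g \in \St_x$ has finite order and is in particular diagonalizable. Let $V = \bigoplus_{l=1}^{k} V_{\lambda_{l}}$ be its eigenspace decomposition, with $s_l := \dim V_{\lambda_l} > 0$ and $\sum_l s_l = r$. Suppose $g \notin \mu_r$; then $k \ge 2$ and each $s_l$ satisfies $0 < s_l < r$. Since $g$ preserves each flag $W_\bullet^i$ and is diagonalizable, each $W_j^i$ must decompose as $W_j^i = \bigoplus_l (W_j^i \cap V_{\lambda_l})$, yielding the identity $\sum_{l} \dim (W_j^i \cap V_{\lambda_l}) = j$ for all $i, j$.

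Now let $L_\bfb$ be a linearization with respect to which $x$ is stable. For each $l$, the strict stability inequality of Theorem \ref{thm:GITstability} applied with $V' = V_{\lambda_l}$ (a proper non-zero subspace) gives
\[
\sum_{i,j} b_j^i \dim(W_j^i \cap V_{\lambda_l}) < \frac{s_l}{r} \sum_{i,j} j b_j^i.
\]
Summing these over $l = 1, \ldots, k$ and using the flag-decomposition identity on the left together with $\sum_l s_l = r$ on the right produces the contradiction $\sum_{i,j} j b_j^i < \sum_{i,j} j b_j^i$. Therefore $g$ must be a scalar matrix, and lies in $\mu_r$.

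The main obstacle I anticipate is identifying the right way to combine the eigenspace-wise stability inequalities into a contradiction: weighting the $l$-th inequality by $s_l$ makes the two sides collapse to the same quantity $\sum_{i,j} j b_j^i$, leaving only the strictness to force the contradiction. The remaining points (finiteness of $\St_x$ and semisimplicity of $g$) are routine consequences of GIT stability.
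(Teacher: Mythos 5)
Your proof is correct, but the mechanism for ruling out a non-scalar element differs from the paper's. Both arguments begin identically: finiteness of $\St_{x}$ forces $g$ to have finite order, hence to be diagonalizable, and $g$-invariance of each $W_{j}^{i}$ forces $W_{j}^{i} = \bigoplus_{l}\bigl(W_{j}^{i}\cap V_{\lambda_{l}}\bigr)$. At that point the paper argues \emph{softly}: if there were two or more eigenvalues, the diagonal matrices acting by a scalar on each $V_{\lambda_{l}}$ (with determinant one) would give a positive-dimensional subtorus of $\St_{x}$, contradicting finiteness of the stabilizer of a stable point; no use is made of the explicit numerical criterion. You instead feed each eigenspace $V' = V_{\lambda_{l}}$ into the Hilbert--Mumford inequality of Theorem \ref{thm:GITstability} and sum, and the identity $\sum_{l}\dim(W_{j}^{i}\cap V_{\lambda_{l}}) = j$ together with $\sum_{l}s_{l}=r$ collapses both sides to $\sum_{i,j} j b_{j}^{i}$, so the strictness of the $k\ge 2$ inequalities yields the contradiction. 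Your route is quantitative and self-contained given the stated stability criterion, and it cleanly isolates where strict (as opposed to semi-) stability is used; the paper's route is shorter and more robust in that it would apply verbatim for any linearized action where stable points have finite stabilizers, without needing the explicit shape of the inequality. Both are complete proofs.
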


\begin{proof}
Let $A \in \St_{x}$. $A$ has a finite order because $\St_{x}$ is finite. Since $\mathrm{char}\; \CC = 0$, the Jordan canonical form of $A$ cannot have any block of size larger than one. Thus we may assume that $A$ is a diagonal matrix. Decompose $V = \oplus_{\lambda}V_{\lambda}$ into eigenspaces with respect to $A$. Note that any invariant space with respect to $A$ has to be of the form $\oplus_{\lambda}W_{\lambda}$ where $W_{\lambda} \subset V_{\lambda}$. Thus all $W_{j}^{i}$ are of these forms. If we take a diagonal matrix $B$ which acts on $V_{\lambda}$ as a multiplication by $a^{\lambda}$ for some $a^{\lambda}$, then $B$ preserves all $W_{j}^{i}$, so $B \in \St_{x}$. But in this case $\dim \St_{x}$ is the number of distinct eigenvalues minus one. Since $x$ has a finite stabilizer, there is only one eigenvalue. Therefore $A$ is a scalar matrix.
\end{proof}

\begin{definition}\label{def:maxstlocus}
A general linearization $L_{\bfb}$ is called a linearization \emph{with a maximal stable locus} if $\Fl(V)^{n}\setminus (\Fl(V)^{n})^{ss}(L_{\bfb})$ is of codimension at least two, (so $\rho(\Fl(V)^{n}\git_{L_{\bfb}}\SL_{r}) = (r-1)n$). 
\end{definition}

\begin{remark}
For small $n$, Proposition \ref{prop:maxPicnumGIT} is not true. For instance, if $r = 2, n = 4$ or $r = n = 3$, for a general linearization, $\Fl(V)^{n}\git \SL_{r}$ is a unirational normal curve. Thus $\Fl(V)^{n}\git \SL_{r} \cong \PP^{1}$.
\end{remark}


\section{Wall crossing analysis}\label{sec:wallcrossing}

Here we describe how the moduli space is changed if one varies the parabolic weight.

\subsection{Walls and chambers}\label{ssec:wallchamber}

The space of all valid normalized parabolic weights is an open polytope
\[
	W_{r, n}^{o} := \{\ba = (a_{j}^{i})_{1 \le j \le r-1, 1 \le i \le n}\;|\;
	1 > a_{1}^{i} > a_{2}^{i} > \cdots > a_{r-1}^{i}>0\}
	\subset \RR^{(r-1)n}.
\]
Since the weight data is normalized, $a_{r}^{i} = 0$. Let $W_{r, n}$ be the closure of $W_{r, n}^{o}$.

The polytopes $W_{r,n}^{o}$ and $W_{r, n}$ have a natural wall-chamber structure: If two weights $\ba$ and $\ba'$ are on the same open chamber, then $\rM_{\bp}(r, 0, \ba) = \rM_{\bp}(r, 0, \ba')$. If $\ba$ is in one of open chambers, then $\ba$ is general thus $\rM_{\bp}(r, 0, \ba)$ is smooth. Note that it is possible that $\ba$ is not effective, so $\rM_{\bp}(r, 0, \ba) = \emptyset$. 

A parabolic weight $\ba$ is on a wall if there is a strictly semi-stable parabolic bundle $\cE = (E, \{W_{\bullet}^{i}\}, \ba)$. Then there is a unique destabilizing parabolic subbundle $\cF = (F, \{{W|_{F}}_{\bullet}^{i}\}, \bfb)$. For such an $\cF$,
\[
	\mu(\cF) = \frac{\deg F + \sum_{i=1}^{n}\sum_{j=1}^{r-1}a_{j}^{i}
	\dim ((W_{j}^{i} \cap F|_{p^{i}})/(W_{j-1}^{i} \cap F|_{p^{i}}))}
	{\rk F}
	= \frac{|\ba|}{r} = \mu(\cE).
\]
It occurs when there are two integers $d \le 0$ and $1 \le s \le r-1$, $n$ subsets $J^{i} \subset [r]$ of size $s$ such that
\[
	\frac{d + \sum_{i=1}^{n}\sum_{j \in J^{i}}a_{j}^{i}}{s}
	= \frac{|\ba|}{r}.
\]
Let $\cJ := \{J^{1}, J^{2}, \cdots, J^{n}\}$. Thus a stability wall is of the form
\[
	\Delta(s, d, \cJ) :=
	\{\ba \in W_{r, n}^{o}\;|\; r(d + \sum_{i=1}^{n}
	\sum_{j \in J^{i}}a_{j}^{i}) = s|\ba|\}.
\]
This equation is linear with respect to the variables $a_{j}^{i}$. So the polytope $W_{r, n}^{o}$ is divided by finitely many hyperplanes and each open chamber is a connected component of
\[
	W_{r, n}^{o} \setminus \left(\bigcup \Delta(s, d, \cJ)\right).
\]
For $\cJ = \{J^{1}, J^{2}, \cdots, J^{n}\}$, set $\cJ^{c} := \{[r]\setminus J^{1}, [r] \setminus J^{2}, \cdots, [r] \setminus J^{n}\}$. Then $\Delta(s, d, \cJ) = \Delta(r-s, -d, \cJ^{c})$. $\Delta(s, d, \{J, J, \cdots, J\})$ is denoted by $\Delta(s, d, nJ)$.

A wall-crossing is \emph{simple} if it is a wall-crossing along the relative interior of a wall. Because every wall-crossing can be decomposed into a finite sequence of simple wall-crossings, it is enough to study simple wall-crossings.

Fix a wall $\Delta(s, d, \cJ)$ and take a general point $\ba \in \Delta(s, d, \cJ)$. A small open neighborhood of $\ba$ is divided into two pieces by the wall. Let $\Delta(s, d, \cJ)^{+}$ and $\Delta(s, d, \cJ)^{-}$ be the two connected components such that
\[
	r(d + \sum_{i=1}^{n}\sum_{j \in J^{i}}a_{j}^{i}) > s|\ba|
\]
and
\[
	r(d + \sum_{i=1}^{n}\sum_{j \in J^{i}}a_{j}^{i}) < s|\ba|
\]
respectively. Let $\ba^{+}$ (resp. $\ba^{-}$) be a point on $\Delta(s, d, \cJ)^{+}$ (resp. $\Delta(s, d, \cJ)^{-}$).

There are two functorial morphisms (\cite[Theorem 3.1]{BH95}, \cite[Section 7]{Tha96})
\[
	\xymatrix{\rM_{\bp}(r, 0, \ba^{-}) \ar[rd]^{\phi^{-}} &&
	\rM_{\bp}(r, 0, \ba^{+}) \ar[ld]_{\phi^{+}}\\
	& \rM_{\bp}(r, 0, \ba).}
\]
Let $Y \subset \rM_{\bp}(r, 0, \ba)$ be the locus that one of $\phi^{\pm} : Y^{\pm} := {\phi^{\pm}}^{-1}(Y) \to Y$ is not an isomorphism. That means $\rM_{\bp}(r, 0, \ba^{-}) \setminus Y^{-} \cong \rM_{\bp}(r, 0, \ba) \setminus Y \cong \rM_{\bp}(r, 0, \ba^{+}) \setminus Y^{+}$. We call $Y^{\pm}$ as the \emph{wall-crossing center}. Suppose that $\cE = (E, \{W_{\bullet}^{i}\}, \ba)$ is on $Y$. Then there is a rank $s$ destabilizing subbundle $\cE^{+} = (E^{+}, \{{W|_{E^{+}}}_{\bullet}^{i}\}, \bfb)$ with $\mu(\cE^{+}) = \mu(\cE)$. We have a short exact sequence $0 \to \cE^{+} \to \cE \to \cE^{-} \to 0$ in the category of parabolic sheaves where $\cE^{-} = (E^{-} := E/E^{+}, \{{W/E^{+}}_{\bullet}^{i}\}, \bc)$. Then $Y$ parametrizes S-equivalent classes of $\cE^{+} \oplus \cE^{-}$. Therefore $Y \cong \rM_{\bp}(s, d, \bfb) \times \rM_{\bp}(r-s, -d, \bc)$.

For the same data, define $\bfb^{\pm}$ and $\bc^{\pm}$ by using $\ba^{\pm}$. Let $(\bE^{+},\bfb^{\pm})$ be the universal family on $\rM_{\bp}(s, d, \bfb^{\pm})$. Let $(\bE^{-},\bc^{\pm})$ be the universal family on $\rM_{\bp}(r-s, -d, \bc^{\pm})$. Let $\pi^{+}:\rM_{\bp}(s, d, \bfb^{+})\times\rM_{\bp}(r-s, -d, \bc^{+})\times\PP^{1}\to\rM_{\bp}(s, d, \bfb^{+})\times\rM_{\bp}(r-s, -d, \bc^{+})$ and $\pi^{-}:\rM_{\bp}(s, d, \bfb^{-})\times\rM_{\bp}(r-s, -d, \bc^{-})\times\PP^{1}\to\rM_{\bp}(s, d, \bfb^{-})\times\rM_{\bp}(r-s, -d, \bc^{-})$ be projections. The exceptional fibers of $\phi^{-}$ and $\phi^{+}$ are projective bundles
\[
	Y^{-} = \PP R^{1}\pi^{-}_{*}
	\cpHom((\bE^{-},\bc^{-}),(\bE^{+},\bfb^{-}))
\]
and
\[
	Y^{+} = \PP R^{1}\pi^{+}_{*}
	\cpHom((\bE^{+},\bfb^{+}),(\bE^{-},\bc^{+}))
\]
respectively. Fiberwisely, ${\phi^{-\;-1}}(\cE^{+} \oplus \cE^{-}) = \PP \Ext^{1}((E^{-}, \{{W/E^{+}}_{\bullet}^{i}\}, \bc^{-}), (E^{+}, \{{W|_{E^{+}}}_{\bullet}^{i}\}, \bfb^{-}))$ and ${\phi^{+ -1}}(\cE^{+} \oplus \cE^{-}) = \PP \Ext^{1}((E^{+}, \{{W|_{E^{+}}}_{\bullet}^{i}\}, \bfb^{+}), (E^{-}, \{{W/E^{+}}_{\bullet}^{i}\}, \bc^{+}))$.

\begin{remark}\label{rem:wallcrossing}
Furthermore, we can describe the change of the fibers in detail. An element $\cE \in Y^{-}$ fits into a short exact sequence $0 \to \cE^{+} \to \cE \to \cE^{-} \to 0$ in the category of parabolic bundles. In particular, their underlying bundles fits into $0 \to E^{+} \to E \to E^{-} \to 0$. After the wall-crossing, $\phi^{+ -1}(\phi^{-}([\cE]))$ is the set of parabolic bundles $\cF$ which fits into the sequence $0 \to \cE^{-} \to \cF \to \cE^{+} \to 0$. Again, its underlying bundle fits into the exact sequence $0 \to E^{-} \to F \to E^{+} \to 0$.
\end{remark}

\begin{proposition}[\protect{\cite[Section 7]{Tha96}}]\label{prop:wallcrossing}
Suppose that $\rM_{\bp}(r, 0, \ba^{\pm})$ are nonempty. The blow-up of $\rM_{\bp}(r, 0, \ba^{-})$ along $Y^{-}$ is isomorphic to the blow-up of $\rM_{\bp}(r, 0, \ba^{+})$ along $Y^{+}$. In paticular, $\dim Y^{-}+\dim Y^{+}-\dim Y=\dim\rM_{\bp}(r, 0, \ba)-1$.
\end{proposition}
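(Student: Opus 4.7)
The proof reduces to Thaddeus's master-space theorem for variation of GIT \cite[Section 7]{Tha96}. First, I would realize the simple wall-crossing across $\Delta(s, d, \cJ)$ as an instance of variation of GIT: adapting the Bertram--Hyeon construction \cite{BH95} to the parabolic setting, one produces a master moduli space with a $\CC^{*}$-action whose GIT quotients for positive, zero, and negative characters give $\rM_{\bp}(r,0,\ba^{+})$, $\rM_{\bp}(r,0,\ba)$, and $\rM_{\bp}(r,0,\ba^{-})$ respectively. The image of the $\CC^{*}$-fixed locus is precisely the wall-crossing center $Y$ identified before the proposition.

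Next, I would verify the local structure at $[\cE] = [\cE^{+} \oplus \cE^{-}] \in Y$ using Luna's slice theorem together with the deformation theory of Section \ref{sec:modparbdl}. Since $\cE^{+}$ and $\cE^{-}$ are non-isomorphic stable parabolic bundles, $\mathrm{Aut}(\cE)$ modulo the diagonal scalars is a single copy of $\CC^{*}$. By Lemma \ref{lem:derivedfunctor} and Lemma \ref{lem:parabolicextension}, the Zariski tangent space decomposes as
\[
	\Ext^{1}(\cE,\cE) = \Ext^{1}(\cE^{+},\cE^{+}) \oplus \Ext^{1}(\cE^{-},\cE^{-}) \oplus \Ext^{1}(\cE^{+},\cE^{-}) \oplus \Ext^{1}(\cE^{-},\cE^{+}),
\]
where the residual $\CC^{*}$ acts trivially on the first two summands and with opposite weights on the latter two. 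Thus \'etale locally the wall-crossing is the elementary toric $\CC^{*}$-flip, consistent with the projective bundle descriptions $(\phi^{\pm})^{-1}([\cE^{+} \oplus \cE^{-}]) = \PP \Ext^{1}(\cE^{\mp},\cE^{\pm})$ recorded before the proposition.

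In the toric local model, the Kirwan desingularization of the $\chi=0$ quotient coincides with the blow-up of either nontrivial $\chi$-quotient along the corresponding $\PP \Ext^{1}$-fiber, and is manifestly symmetric in $\pm$. Globalizing via the Luna slice isomorphism produces an \'etale-local identification between the blow-ups of $\rM_{\bp}(r,0,\ba^{\pm})$ along $Y^{\pm}$; normality of both moduli spaces (\cite[Theorem 4.1]{MS80}) together with birationality upgrades this to a global isomorphism. For the dimension equality,
\[
	\dim Y^{\pm} = \dim Y + \dim \Ext^{1}(\cE^{\mp},\cE^{\pm}) - 1,
\]
and the slice analysis shows that the generic codimension of $Y$ in $\rM_{\bp}(r,0,\ba)$ equals $\dim \Ext^{1}(\cE^{+},\cE^{-}) + \dim \Ext^{1}(\cE^{-},\cE^{+}) - 1$, the $-1$ absorbing the residual $\CC^{*}$-orbit. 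Summing yields $\dim Y^{+} + \dim Y^{-} - \dim Y = \dim \rM_{\bp}(r,0,\ba) - 1$. The main technical obstacle is the rigorous construction of the parabolic master space and the verification of Kempf's descent for the intermediate blow-ups; once these are in place, Thaddeus's argument applies verbatim, with the Ext-group inputs supplied by Lemmas \ref{lem:derivedfunctor}, \ref{lem:parabolicextension}, and \ref{lem:Serreduality}.
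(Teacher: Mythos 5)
Your argument is correct and is essentially the proof the paper delegates to its citation: the paper supplies no proof of its own, simply invoking Thaddeus's master-space/VGIT analysis \cite[Section 7]{Tha96} as adapted to parabolic bundles in \cite{BH95}, which is exactly the route you reconstruct (master space with $\CC^{*}$-action, Luna slice, the $\Ext^{1}$-decomposition with opposite weights, and the symmetric blow-up description), and your dimension count is consistent since $\mathrm{codim}\, Y = \dim\Ext^{1}(\cE^{+},\cE^{-}) + \dim\Ext^{1}(\cE^{-},\cE^{+}) - 1$ follows from $\chi(\cE,\cE)$ additivity with $\pHom(\cE^{\pm},\cE^{\mp}) = 0$. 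One small slip: \cite{BH95} is Boden--Hu, not ``Bertram--Hyeon.''
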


Note that for some weight data $\ba^{+}$, the moduli space $\rM_{\bp}(r, 0, \ba^{+})$ may be empty.

\begin{proposition}\label{prop:boundarywall}
Suppose that $\rM_{\bp}(r, 0, \ba^{+}) = \emptyset$. Then $\rM_{\bp}(r, 0, \ba^{-})$ has a projective bundle structure over $\rM_{\bp}(r, 0, \ba) = \rM_{\bp}(s, d, \bfb^{-}) \times \rM_{\bp}(r-s, -d, \bc^{-})$.
\end{proposition}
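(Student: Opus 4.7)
The plan is to deduce both assertions directly from the wall-crossing picture in Section \ref{ssec:wallchamber}, by using the hypothesis to collapse the generic stable loci on both sides of the wall. First, I would argue that every $\ba$-semistable parabolic bundle is in fact strictly $\ba$-semistable. The reason is that parabolic stability is an open condition in the weight: if $\cE$ were $\ba$-stable, then the finite collection of strict inequalities $\mu(\cF) < \mu(\cE)$ ranging over the possible types of proper parabolic subbundles persists under a sufficiently small perturbation of $\ba$, so $\cE$ would also be $\ba^{+}$-stable, contradicting the hypothesis $\rM_{\bp}(r, 0, \ba^{+}) = \emptyset$. Thus $\rM_{\bp}(r, 0, \ba) = Y$, and every S-equivalence class splits as $\cE^{+} \oplus \cE^{-}$ where $\cE^{+}$ is a destabilizing parabolic subbundle of rank $s$ and degree $d$. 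Because the wall-crossing is simple, the induced weights $\bfb$ and $\bfb^{\pm}$ on the rank $s$ moduli problem (resp. $\bc$ and $\bc^{\pm}$ on the rank $r-s$ problem) lie in a common chamber of the smaller weight polytope, so the corresponding moduli spaces agree and we may identify $Y \cong \rM_{\bp}(s, d, \bfb^{-}) \times \rM_{\bp}(r-s, -d, \bc^{-})$.

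Applying the same openness argument on the other side, any $\ba^{-}$-stable bundle lying in the isomorphism locus $\rM_{\bp}(r, 0, \ba^{-}) \setminus Y^{-} \cong \rM_{\bp}(r, 0, \ba) \setminus Y$ of $\phi^{-}$ would have to be $\ba$-stable, which is impossible by the previous step. Hence $\rM_{\bp}(r, 0, \ba^{-}) = Y^{-}$, and the description $Y^{-} = \PP R^{1}\pi^{-}_{*}\cpHom((\bE^{-}, \bc^{-}), (\bE^{+}, \bfb^{-}))$ recorded in Section \ref{ssec:wallchamber} exhibits $\rM_{\bp}(r, 0, \ba^{-})$ as a projective bundle over $\rM_{\bp}(s, d, \bfb^{-}) \times \rM_{\bp}(r-s, -d, \bc^{-})$.

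The main technical point to verify is that $Y^{-}$ is a genuine Zariski-locally trivial projective bundle rather than merely the projectivization of a coherent sheaf with jumping fiber dimension. This amounts to showing that $\dim \Ext^{1}(\cE^{-}, \cE^{+})$ is constant over the base. Since $\cE^{+}$ and $\cE^{-}$ are non-isomorphic stable parabolic bundles of equal parabolic slope with respect to $\bfb^{-}$ and $\bc^{-}$, one has $\pHom(\cE^{-}, \cE^{+}) = 0 = \spHom(\cE^{+}, \cE^{-})$, and combining this with Serre duality (Lemma \ref{lem:Serreduality}) and the Riemann--Roch formula for parabolic $\Ext$ groups on $\PP^{1}$ yields the desired constancy.
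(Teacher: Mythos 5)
Your proof is correct and follows essentially the same route as the paper: the hypothesis $\rM_{\bp}(r,0,\ba^{+})=\emptyset$ forces every $\ba$-semistable bundle to be strictly semistable, hence $Y^{-}=\rM_{\bp}(r,0,\ba^{-})$, and the projective-bundle description of $Y^{-}$ over $Y\cong\rM_{\bp}(s,d,\bfb^{-})\times\rM_{\bp}(r-s,-d,\bc^{-})$ from Section \ref{ssec:wallchamber} gives the conclusion. The paper's own proof is a one-line version of this observation; your additional verifications (openness of stability in the weight, vanishing of $\pHom(\cE^{-},\cE^{+})$ giving constant $\dim\Ext^{1}$ and hence genuine local triviality) are details the paper leaves implicit.
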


\begin{proof}
Note that $\rM_{\bp}(r, 0, \ba^{+}) = \emptyset$ only if $Y^{-} = \rM_{\bp}(r, 0, \ba^{-})$. Conversely, if $Y^{-} = \rM_{\bp}(r, 0, \ba^{-})$ and $\rM_{\bp}(r, 0, \ba^{+}) \ne \emptyset$, it is straightforward to make a contradiciton from Proposition \ref{prop:wallcrossing} since $\dim Y^{+} \ge \dim Y$ and $\dim Y^{-} \ge \dim \rM_{\bp}(r, 0, \ba)$.
\end{proof}

\subsection{Scaling up}

In this section, we examine a special kind of wall-crossing. Let $\ba$ be a general parabolic weight. For a positive real number $c > 0$, define a parabolic weight $\ba(c)$ as $a(c)_{j}^{i} := ca_{j}^{i}$. When $c = \epsilon \ll 1$, $\ba(c)$ satisfies the smallness condition in Theorem \ref{thm:smallweightGIT}, so $\rM_{\bp}(r, 0, \ba(\epsilon)) = \Fl(V)^{n}\git_{L_{\ba(\epsilon)}}\SL_{r}$. As $c$ increases, we may cross several walls. By perturbing if it is necessary, we may assume that all wall-crossings are simple. We will call this type of wall-crossings as a \emph{scaling wall-crossing}.

Suppose that $\Delta(s, d, \cJ)$ is a wall we can meet and $\ba^{0} := \ba(c) \in \Delta(s, d, \cJ)$. Let $\ba^{\pm} = \ba(c\pm \epsilon)$. For a parabolic bundle $\cE = (E, \{W_{\bullet}^{i}\}, \ba^{0}) \in Y$, let $\cE^{+}$ be the destabilizing subbundle with respect to $\ba^{0}$ and $\cE^{-}$ be the quotient $\cE/\cE^{+}$. The induced weight data of $\cE^{+}$ with respect to $\ba^{\pm}$ is denoted by $\bfb^{\pm}$, as before.

Here we would like to compute $\dim \Ext^{1}(\cE^{-}, \cE^{+})$ with respect to $\ba^{-}$. By Serre duality (Lemma \ref{lem:Serreduality}), $\Ext^{1}(\cE^{-}, \cE^{+}) \cong \spHom(\cE^{+}\otimes \cO(-(n-2)),\cE^{-})^{*}$.
Consider the following short exact sequence of sheaves
\begin{equation}\label{eqn:exseq}
\begin{split}
0 &\to \cspHom(\cE^{+}\otimes \cO(-(n-2)), \cE^{-}) \to
\cHom(E^{+} \otimes \cO(-(n-2)), E^{-})\\
&\to \frac{\bigoplus_{i=1}^{n}\Hom(E^{+}\otimes \cO(-(n-2))|_{p^{i}},
E^{-}|_{p^{i}})}{\bigoplus_{i=1}^{n}N_{p^{i}}(\cE^{+}\otimes \cO(-(n-2)), \cE^{-})} \to 0
\end{split}
\end{equation}
where $N_{p}(\cE_{1}, \cE_{2})$ is the subspace of strictly parabolic maps in $\Hom(E_{1}|_{p}, E_{2}|_{p})$ at $p \in \PP^{1}$. For $\ba^{-}$, $\mu(\cE^{-}) > \mu(\cE^{+})$. Because $(\cE^{-}, \bfb^{-})$ and $(\cE^{+}, \bc^{-})$ are stable,
\[
	\rH^{1}(\cspHom(\cE^{+}\otimes \cO(-(n-2)), \cE^{-}))
	= \Ext^{0}(\cE^{-}, \cE^{+})^{*} = \pHom(\cE^{-}, \cE^{+})^{*}
	= 0
\]
by Lemma \ref{lem:Serreduality}. Thus we have an exact sequence of vector spaces
\begin{equation}\label{eqn:exseqglobal}
\begin{split}
0 &\to \spHom(\cE^{+}\otimes \cO(-(n-2)), \cE^{-}) \to
\Hom(E^{+} \otimes \cO(-(n-2)), E^{-})\\
&\to \frac{\bigoplus_{i=1}^{n}\Hom(E^{+}\otimes \cO(-(n-2))|_{p^{i}},
E^{-}|_{p^{i}})}{\bigoplus_{i=1}^{n}N_{p^{i}}(\cE^{+}\otimes \cO(-(n-2)), \cE^{-})} \to 0.
\end{split}
\end{equation}

Recall that at each parabolic point $p^{i}$, the intersection of $E^{+}$ with $E|_{p^{i}}$ is described by an $s$-subset $J^{i} \subset [r]$.

\begin{lemma}\label{lem:dimN}
Suppose that $\rk E^{+} = s$. In the above situation,
\[
	\dim N_{p^{i}}(\cE^{+}\otimes \cO(-(n-2)), \cE^{-})
	= \dim \omega_{J^{i}}
\]
where $\omega_{J^{i}}$ is the Schubert class in $\rH^{*}(\Gr(s, r))$ associated to the increasing sequence $J^{i}$.
\end{lemma}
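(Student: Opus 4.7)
The plan is to reduce the statement to a combinatorial count of matrix entries at a single point. First I would observe that since $\cO(-(n-2))|_{p^{i}}$ is one-dimensional, tensoring with it does not alter the parabolic data on the fiber; so $N_{p^{i}}(\cE^{+}\otimes\cO(-(n-2)),\cE^{-})$ is canonically identified with $N_{p^{i}}(\cE^{+},\cE^{-})$, and it suffices to compute the dimension of the space of strongly parabolic linear maps $f:E^{+}|_{p^{i}}\to E^{-}|_{p^{i}}$ with respect to the induced flags and induced weights.

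Next I would make the parabolic data explicit. Writing $J^{i}=\{j_{1}<\cdots<j_{s}\}$ and $[r]\setminus J^{i}=\{j'_{1}<\cdots<j'_{r-s}\}$, the induced flag on $E^{+}|_{p^{i}}$ is $W_{k}^{+}:=W_{j_{k}}^{i}\cap E^{+}|_{p^{i}}$ with induced weight $b_{k}^{i}=a_{j_{k}}^{i}$, and the induced flag on $E^{-}|_{p^{i}}$ is $W_{\ell}^{-}:=\mathrm{image}(W_{j'_{\ell}}^{i}\to E|_{p^{i}}/E^{+}|_{p^{i}})$ with induced weight $c_{\ell}^{i}=a_{j'_{\ell}}^{i}$. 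Choose bases $v_{k}\in W_{k}^{+}\setminus W_{k-1}^{+}$ and $w_{\ell}\in W_{\ell}^{-}\setminus W_{\ell-1}^{-}$, and record $f$ by a matrix $(f_{\ell k})$ via $f(v_{k})=\sum_{\ell}f_{\ell k}w_{\ell}$. Since the weights $a_{j}^{i}$ are strictly decreasing in $j$ and $J^{i}$ is disjoint from its complement, the inequality $b_{k}^{i}\ge c_{m+1}^{i}$ is equivalent to $j_{k}<j'_{m+1}$. Unwinding the strongly parabolic condition ``$f(W_{k}^{+})\subset W_{m}^{-}$ whenever $j_{k}<j'_{m+1}$'' then translates into the single coordinate rule $f_{\ell k}=0$ whenever $j'_{\ell}>j_{k}$, because $j'$ is increasing.

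Finally I would count. For each fixed $k\in[s]$, the allowed positions are the $\ell$ with $j'_{\ell}<j_{k}$, whose number equals $|[j_{k}-1]\setminus J^{i}|=(j_{k}-1)-(k-1)=j_{k}-k$. Summing gives $\sum_{k=1}^{s}(j_{k}-k)$, which is exactly the dimension of the Schubert variety in $\Gr(s,r)$ indexed by the increasing sequence $J^{i}$, proving the claim.

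I do not expect a genuine obstacle: the argument is essentially dictionary work. The only delicate point is the verification that, because $J^{i}$ and its complement are disjoint, the inequality $\ge$ in the definition of strongly parabolic collapses to $>$ for the induced weights, which is what makes the combinatorial count clean and matches $\dim\omega_{J^{i}}$ on the nose.
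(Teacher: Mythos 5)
Your proof is correct and follows essentially the same route as the paper's: reduce to the fiber at $p^{i}$ (the twist by $\cO(-(n-2))$ being irrelevant there), choose a basis adapted to the flag so that $E^{+}|_{p^{i}}$ and $E^{-}|_{p^{i}}$ are spanned by the basis vectors indexed by $J^{i}$ and its complement, and count the matrix entries allowed by the (strongly) parabolic condition, yielding $\sum_{k}(J^{i}_{k}-k)=\dim\omega_{J^{i}}$. The only difference is that you spell out the induced weights and the $\ge$ versus $>$ issue explicitly, which the paper leaves implicit.
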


\begin{proof}
Note that $\dim N_{p^{i}}(\cE^{+}\otimes \cO(-(n-2)), \cE^{-}) = \dim N_{p^{i}}(\cE^{+}, \cE^{-})$. At the fiber of $p^{i}$, take an ordered basis $\{e_{j}\}$ of $E|_{p^{i}}$ by choosing a nonzero vector $e_{j}$ for each $W_{j}^{i}\setminus W_{j-1}^{i}$. Then $E^{+}|_{p^{i}}$ (resp. $E^{-}|_{p^{i}}$) is spanned by $\{e_{j}\}_{j \in J^{i}}$ (resp. $\{e_{j}\}_{j \in [r] \setminus J^{i}}$). Now to construct a map in $N_{p^{i}}(\cE^{+}, \cE^{-})$, $e_{j} \in E^{+}|_{p^{i}}$ can be mapped into the subspace of generated by $e_{k}$ where $k < j$ and $k \notin J^{i}$. Therefore the dimension is $\sum_{j=1}^{s}(J_{j}^{i} - j)$ and this is equal to $\dim \omega_{J^{i}}$.
\end{proof}

Now from \eqref{eqn:exseqglobal} and Lemma \ref{lem:dimN}, we obtain the following result.

\begin{proposition}\label{prop:extdim}
Suppose that $n$ is sufficiently large. Let $\cE$ be a parabolic bundle on $Y \subset \rM_{\bp}(r, 0, \ba)$ for a scaling wall $\Delta(s, d, \cJ)$. Let $\cE^{+}$ be the destabilizing subbundle and $\cE^{-} = \cE/\cE^{+}$. With respect to the parabolic weight $\ba^{-}$,
\begin{equation}\label{eqn:dimExt}
	\dim \Ext^{1}(\cE^{-}, \cE^{+})
	= \dim \Hom(E^{+}\otimes \cO(-(n-2)), E^{-})
	- ns(r-s) + \sum_{i=1}^{n}\dim \omega_{J^{i}}.
\end{equation}
\end{proposition}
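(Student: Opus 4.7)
The plan is to treat this proposition as a bookkeeping corollary that assembles the three ingredients already on the table: Serre duality, the short exact sequence \eqref{eqn:exseqglobal}, and the local dimension count of Lemma \ref{lem:dimN}. I would proceed in three steps.

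First, I apply Lemma \ref{lem:Serreduality} with the roles of $\cE$ and $\cF$ suitably rearranged. This identifies
\[
\dim \Ext^{1}(\cE^{-}, \cE^{+}) = \dim \spHom(\cE^{+}\otimes \cO(-(n-2)), \cE^{-}),
\]
so the problem reduces to computing the dimension of the strongly parabolic Hom-space on the right. Second, I invoke \eqref{eqn:exseqglobal}, which is a genuine short exact sequence of vector spaces thanks to the vanishing $\rH^{1}(\cspHom(\cE^{+}\otimes \cO(-(n-2)), \cE^{-})) = 0$ established in the paragraph before the statement (that vanishing is itself a consequence of Serre duality together with the slope inequality $\mu(\cE^{-}) > \mu(\cE^{+})$ with respect to $\ba^{-}$ and the stability of $\cE^{\pm}$). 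Additivity of dimension on \eqref{eqn:exseqglobal} then gives
\[
\dim \spHom = \dim \Hom(E^{+}\otimes \cO(-(n-2)), E^{-}) - \sum_{i=1}^{n}\bigl(\dim \Hom(E^{+}|_{p^{i}}, E^{-}|_{p^{i}}) - \dim N_{p^{i}}(\cE^{+}\otimes \cO(-(n-2)), \cE^{-})\bigr),
\]
where I have used that the twist $\cO(-(n-2))$ is trivial on each parabolic fiber, so $\dim \Hom(E^{+}\otimes \cO(-(n-2))|_{p^{i}}, E^{-}|_{p^{i}}) = s(r-s)$. Third, I feed in Lemma \ref{lem:dimN}, which replaces $\dim N_{p^{i}}$ by $\dim \omega_{J^{i}}$. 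The resulting expression is exactly \eqref{eqn:dimExt}.

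I do not anticipate a real obstacle; the only step worth scrutinizing is the exactness of \eqref{eqn:exseqglobal} at its right end, i.e. the vanishing of the connecting $\rH^{1}$. That vanishing follows once one knows $\pHom(\cE^{-}, \cE^{+}) = 0$, which is standard for distinct stable parabolic bundles of the indicated slope ordering, and this is where the hypothesis that $n$ is large enough for $\ba^{-}$ to force both $\cE^{+}$ and $\cE^{-}$ to be stable (so that their HN-pieces are simple) is quietly used. Everything else is dimension-counting along the sheaf-to-global-section exact sequence.
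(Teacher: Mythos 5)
Your argument is correct and coincides with the paper's own proof, which obtains \eqref{eqn:dimExt} by exactly this assembly: Serre duality, the dimension count along the exact sequence \eqref{eqn:exseqglobal} (whose exactness at the right end rests on the vanishing $\pHom(\cE^{-},\cE^{+})=0$ that you correctly isolate as the one point needing justification), and Lemma \ref{lem:dimN}. One minor quibble: the stability of $\cE^{+}$ and $\cE^{-}$ with respect to the perturbed weights comes from the wall-crossing being simple (they are the two Jordan--H\"older factors at a general point of a single wall), not from $n$ being large.
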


Let $\ba$ be a general parabolic weight and consider the scaling wall-crossing. For the weight data $\ba(\epsilon)$, $\rM_{\bp}(r, 0, \ba(\epsilon))$ is the GIT quotient $\Fl(V)^{n}\git_{L_{\ba(\epsilon)}}\SL_{r}$, and any underlying vector bundle $E$ of $\cE \in \rM_{\bp}(r, 0, \ba(\epsilon))$ is trivial.

The first wall we can meet while the scaling wall-crossing is of the form $\Delta(s, -1, n[s])$. Let $\Delta(s, d, \cJ)$ be the first wall. Because $E$ does not have any positive degree subbundle, $d \le 0$. A wall of the form $\Delta(s, 0, \cJ)$ does not appear while scaling. (These walls are GIT walls.) The maximal parabolic slope we can obtain occurs when $|d|$ is the smallest one and $J^{i} = [s]$. Indeed only two of them actually occur.

\begin{lemma}
The first wall is either $\Delta(1, -1, n[1])$ or $\Delta(r-1, -1, n[r-1])$. Moreover, only one of them occurs during the scaling wall-crossing.
\end{lemma}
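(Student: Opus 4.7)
The plan is to reduce to a dimension count: by the preceding discussion the first wall must be of the form $\Delta(s, -1, n[s])$, and such a wall is actually crossed on the $-$ side of the scaling iff the exceptional locus $Y^{-}$ is nonempty. Since $Y^{-} \to Y$ is a projective bundle with fiber $\PP\Ext^{1}(\cE^{-}, \cE^{+})$, this reduces to asking for which $s$ the group $\Ext^{1}(\cE^{-}, \cE^{+})$ is nonzero, and I will compute this using Proposition~\ref{prop:extdim}.

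For the computation, $E = \cO^{r}$ admits only $E^{+} = \cO(-1) \oplus \cO^{s-1}$ as a rank-$s$ degree-$(-1)$ subbundle by Birkhoff--Grothendieck, so $E^{-} = \cO(1) \oplus \cO^{r-s-1}$, and the hypothesis $J^{i} = [s]$ makes $\dim\omega_{J^{i}} = 0$. Computing $E^{+*} \otimes E^{-} \otimes \cO(n-2) = \cO(n) \oplus \cO(n-1)^{r-2} \oplus \cO(n-2)^{(s-1)(r-s-1)}$ gives
\[
\dim\Hom(E^{+} \otimes \cO(-(n-2)), E^{-}) = (r-1)n + 1 + (s-1)(r-s-1)(n-1).
\]
The identity $s(r-s) = (s-1)(r-s-1) + (r-1)$ will make the $n$-linear terms cancel in Proposition~\ref{prop:extdim}, leaving
\[
\dim\Ext^{1}(\cE^{-}, \cE^{+}) = 1 - (s-1)(r-s-1).
\]

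Now $(s-1)(r-s-1) \ge 1$ for $1 < s < r-1$, so $\Ext^{1} = 0$ and $Y^{-} = \emptyset$ there; the middle walls do not occur in the scaling wall-crossing. For $s \in \{1, r-1\}$ one gets $\dim\Ext^{1} = 1$, so $Y^{-} \to Y$ is an isomorphism and the wall is genuine. Finally the critical values $c_{s} = r/f(s)$ satisfy $c_{1} \ne c_{r-1}$ for a general $\ba$, so exactly one of $\Delta(1,-1,n[1])$ and $\Delta(r-1,-1,n[r-1])$ is the first wall met, establishing the ``only one'' claim. The main obstacle will be the cancellation producing the clean formula $1 - (s-1)(r-s-1)$: one needs the explicit form of $\dim\Hom$ and the vanishing $\dim\omega_{[s]} = 0$ to combine via the identity $s(r-s) = (s-1)(r-s-1) + (r-1)$, after which the $n$-dependence disappears and the final answer is purely combinatorial in $r, s$.
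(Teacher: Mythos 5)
Your argument for the first assertion is correct but takes a genuinely different route from the paper. The paper excludes the middle walls $\Delta(s,-1,n[s])$, $2\le s\le r-2$, by a slope argument: the destabilizing $\cO(-1)\oplus\cO^{s-1}\hookrightarrow\cO^{r}$ factors through a trivial subbundle $V'\otimes\cO$ with $\dim V'=s+1$, and the stability inequalities for the induced subbundle $\cF'$ and for a trivial line subbundle of $F$ have weighted average exactly $\mu(\cE)$, a contradiction. You instead run the Euler-characteristic count of Proposition \ref{prop:extdim} (which the paper only deploys afterwards, in Lemma \ref{lem:firstwallcrossing}, for $s=1$ and $s=r-1$) for all $s$; your computation is right, the identity $s(r-s)=(s-1)(r-s-1)+(r-1)$ does yield $\dim\Ext^{1}(\cE^{-},\cE^{+})=1-(s-1)(r-s-1)\le 0$ for $2\le s\le r-2$, and since a genuine simple wall must produce a non-split extension $0\to\cE^{+}\to\cE\to\cE^{-}\to 0$ that is stable on the $-$ side, this kills the middle walls. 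This is clean and arguably more uniform than the paper's argument.

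There is, however, a genuine gap in your treatment of the second assertion. ``Only one of them occurs during the scaling wall-crossing'' does not mean ``only one of them is the first wall met'': the two hyperplanes $\Delta(1,-1,n[1])$ and $\Delta(r-1,-1,n[r-1])$ are in general both traversed by the ray $c\mapsto\ba(c)$, at distinct parameters $c_{1}=r/(r|\ba|_{1}-|\ba|)$ and $c_{r-1}=r/|\ba|$ which can both lie in $(0,1]$ (e.g.\ $r=3$, $n=10$, $a^{i}_{\bullet}=(0.9,0.1)$ gives $c_{1}=3/17$ and $c_{2}=3/10$). The content of the claim is that the wall met second has an \emph{empty} wall-crossing center, i.e.\ is not an actual wall; your observation $c_{1}\ne c_{r-1}$ only shows the two are not crossed simultaneously. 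Your Ext computation cannot close this gap either, since it returns $\dim\Ext^{1}=1$ for both $s=1$ and $s=r-1$ and so does not distinguish them. What is needed is the paper's slope incompatibility: a nonempty center for $\Delta(1,-1,n[1])$ forces $\frac{1}{2}\sum_{i}a_{1}^{i}<\frac{1}{r}|\ba|$ (via the rank-two trivial subbundle through which $\cO(-1)$ factors), a nonempty center for $\Delta(r-1,-1,n[r-1])$ forces $\frac{1}{r-2}\sum_{i}\sum_{j=2}^{r-1}a_{j}^{i}<\frac{1}{r}|\ba|$, and the weighted average of the two left-hand sides with weights $2/r$ and $(r-2)/r$ equals $\frac{1}{r}|\ba|$ exactly, so the two inequalities cannot hold together. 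Since both inequalities are invariant under the rescaling $\ba\mapsto\ba(c)$, at most one of the two walls can carry a nonempty wall-crossing center anywhere along the scaling; this is the statement actually being asserted, and it is absent from your proposal.
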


\begin{proof}
We show that the walls $\Delta(s, -1, n[s])$ for $2 \le s \le r-2$ do not intersect $W_{r, n}^{o}$.

Suppose that $\Delta(s, -1, n[s]) \cap W_{r,n}^{o}$ is nonempty and the wall actually provides a nontrivial wall-crossing for $2 \le s \le r-2$. For notational simplicity, we may assume that $\ba = \ba(1) \in \Delta(s, -1, n[s])$. $\cE = (E = \cO^{r}, \{W_{\bullet}^{i}\}, \ba^{-}) \in Y^{-}$ has a parabolic subbundle $\cF = (F \cong \cO(-1)\oplus \cO^{s-1}, \{{W|_{F}}_{\bullet}^{i}\}, \bfb^{-})$ such that ${W|_{F}}_{j}^{i} = W_{j}^{i}$ for all $i$ and $1 \le j \le s$, and $\mu(\cF) = \mu(\cE)$ with respect to $\ba$. Then there is an $(s+1)$-dimensional vector space $V'$ such that $F \to E$ factors through $F \to V' \otimes \cO \to E$. Let $\cF'$ be the induced parabolic subbundle of $\cE$ whose underlying bundle is $V' \otimes \cO$. With respect to $\ba^{-} = \ba(1-\epsilon)$, $\cE$ is stable. Thus we have
\[
	\frac{1}{s+1}\sum_{i=1}^{n}\sum_{j=1}^{s}a_{j}^{i}
	= \mu(\cF') < \mu(\cE) = \frac{1}{r}|\ba|.
\]
On the other hand, let $F'' \subset F$ be any rank one trivial subbundle and let $\cF''$ be the parabolic subbundle induced by $F''$. Then we have
\[
	\sum_{i=1}^{n}a_{s}^{i} \le \mu(\cF'') < \mu(\cE) = \frac{1}{r}|\ba|.
\]
By taking the weighted average of left sides, we have
\[
	\frac{1}{r}|\ba| >
	\frac{1}{r}\left((s+1)\frac{1}{s+1}
	\sum_{i=1}^{n}\sum_{j=1}^{s}a_{j}^{i}
	+ (r-s-1)\sum_{i=1}^{n}a_{s}^{i}\right)
	= \frac{1}{r}\sum_{i=1}^{n}\left(\sum_{j=1}^{s-1}a_{j}^{i}
	+ (r-s)a_{s}^{i}\right) 	> \frac{1}{r}|\ba|
\]
and this is a contradiction.

Now suppose that the first wall is $\Delta(1, -1, n[1])$. We may assume that $\ba \in \Delta(1, -1, n[1])$. $\cE = (E= \cO^{r}, \{W_{\bullet}^{i}\}, \ba^{-}) \in Y^{-}$ has a subbundle $\cF$ whose underlying bundle $F$ is $\cO(-1)$ such that $F|_{p^{i}} = W_{1}^{i}$. We can take a 2-dimensional $V'$ and $\cF'$ as before. Then
\[
	\frac{1}{2}\sum_{i=1}^{n}a_{1}^{i}
	\le \mu(\cF') < \mu(\cE) = \frac{1}{r}|\ba|.
\]

Now assume that $\cE$ is also in the wall-crossing center for $\Delta(r-1, -1, n[r-1])$. Then there is a subbundle $G \cong \cO(-1)\oplus \cO^{r-2}$ of $E$ such that $G|_{p^{i}} = W_{r-1}^{i}$. Let $\cG$ be the induced parabolic subbundle whose underlying bundle is $G$. Let $V'$ be the $(r-2)$-dimensional vector space such that $G = \cO(-1) \oplus (V' \otimes \cO)$ and let $\cG'$ be the parabolic subbundle associated to $V' \otimes \cO$. Then
\[
	\frac{1}{r-2}\sum_{i=1}^{n}\sum_{j=2}^{r-1}a_{j}^{i}
	\le \mu(\cG') < \mu(\cE)
	= \frac{1}{r}|\ba|.
\]
An weighted average of the left hand sides of is $\mu(\cE)$. This makes a contradiction.
\end{proof}

\begin{remark}
The proof tells us that two wall-crossing centers $Y_{1}^{-}$ for $\Delta(1, -1, n[1])$ and $Y_{r-1}^{-}$ for $\Delta(r-1, -1, n[r-1])$ cannot be simultaneously stable on $\Fl(V)^{n}\git_{L} \SL_{r}$ for any linearization $L$.
\end{remark}

The first wall-crossing, which is either along $\Delta(1, -1, n[1])$ or $\Delta(r-1, -1, n[r-1])$ depending on $\ba$, is always a blow-up.

\begin{lemma}\label{lem:firstwallcrossing}
The scaling wall-crossing along $\Delta(1, -1, n[1])$ (resp. $\Delta(r-1, -1, n[r-1])$) is a blow-up along $Y^{-} \cong \rM_{\bp}(r-1, 1, \bc)$ (resp. $\rM_{\bp}(r-1, -1, \bfb)$).
\end{lemma}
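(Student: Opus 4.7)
My plan is to verify the claim for the wall $\Delta(1, -1, n[1])$; the case $\Delta(r-1, -1, n[r-1])$ is parallel via the involution $\Delta(s, d, \cJ) = \Delta(r-s, -d, \cJ^{c})$. First I would identify the wall stratum $Y$. For $\ba$ general on $\Delta(1, -1, n[1])$, an S-equivalence class in $Y \subset \rM_{\bp}(r, 0, \ba)$ is represented by a direct sum $\cE^{+} \oplus \cE^{-}$, where $\cE^{+}$ is a parabolic line bundle with underlying $\cO_{\PP^{1}}(-1)$ and $\cE^{-}$ is a rank $r-1$, degree $+1$ parabolic bundle. Since a rank-one parabolic bundle on $(\PP^{1}, \bp)$ of given degree carries no nontrivial flag data, $\rM_{\bp}(1, -1, \bfb)$ is a single point, and therefore
\[
	Y \;\cong\; \rM_{\bp}(1, -1, \bfb) \times \rM_{\bp}(r-1, 1, \bc)
	\;\cong\; \rM_{\bp}(r-1, 1, \bc).
\]

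Next I would apply Proposition \ref{prop:extdim} to show $\dim \Ext^{1}(\cE^{-}, \cE^{+}) = 1$. With $s = 1$ and $J^{i} = \{1\}$ one has $\dim \omega_{\{1\}} = 1 - 1 = 0$; with $E^{+} = \cO(-1)$, Riemann--Roch on $\PP^{1}$ gives
\[
	\dim \Hom(E^{+}(-(n-2)), E^{-}) = \dim H^{0}(E^{-}(n-1)) = (r-1)n + 1,
\]
using that $H^{1}(E^{-}(n-1))$ vanishes for $n$ large (parabolic $\bc$-stability bounds the Grothendieck splitting type of $E^{-}$, so all line summands of $E^{-}(n-1)$ have nonnegative degree). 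Substituting into \eqref{eqn:dimExt} yields
\[
	\dim \Ext^{1}(\cE^{-}, \cE^{+}) = \bigl((r-1)n + 1\bigr) - n(r-1) + 0 = 1.
\]
Hence the projective bundle $\phi^{-}|_{Y^{-}} : Y^{-} \to Y$ has zero-dimensional fibers, so it is an isomorphism, and $Y^{-} \cong Y \cong \rM_{\bp}(r-1, 1, \bc)$.

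To conclude, I would invoke Proposition \ref{prop:wallcrossing}. The common blow-up identity gives $\mathrm{Bl}_{Y^{-}} \rM_{\bp}(r, 0, \ba^{-}) \cong \mathrm{Bl}_{Y^{+}} \rM_{\bp}(r, 0, \ba^{+})$. Combining the dimension formula $\dim Y^{-} + \dim Y^{+} - \dim Y = \dim \rM_{\bp}(r, 0, \ba) - 1$ with $\dim Y^{-} = \dim Y$ forces $Y^{+}$ to have codimension one in $\rM_{\bp}(r, 0, \ba^{+})$. By Proposition \ref{prop:smooth}, $\rM_{\bp}(r, 0, \ba^{+})$ is smooth, so $Y^{+}$ is Cartier and blowing it up is trivial. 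Therefore
\[
	\rM_{\bp}(r, 0, \ba^{+}) \;\cong\; \mathrm{Bl}_{Y^{-}} \rM_{\bp}(r, 0, \ba^{-}),
\]
which is precisely the blow-up description claimed.

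The subtle point I anticipate is that the $\Ext^{1}$ calculation must hold uniformly across $Y$, not merely at a generic point, so that $\phi^{-}|_{Y^{-}}$ is an isomorphism scheme-theoretically rather than only set-theoretically. This reduces to uniform vanishing of $H^{1}(E^{-}(n-1))$ across the family of underlying bundles of stable objects in $\rM_{\bp}(r-1, 1, \bc)$; since parabolic stability with fixed $\bc$ yields a bounded family of splitting types, uniformity follows once $n$ is sufficiently large, the standing assumption for the wall-crossing analysis in this section.
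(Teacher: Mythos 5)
Your proposal is correct and follows essentially the same route as the paper: identify $Y\cong\rM_{\bp}(r-1,1,\bc)$ using that $\rM_{\bp}(1,d,\bfb)$ is a point, compute $\dim\Ext^{1}(\cE^{-},\cE^{+})=1$ via Proposition \ref{prop:extdim}, and conclude by Proposition \ref{prop:wallcrossing}. The only cosmetic difference is that the paper notes directly that $E^{-}$, being a degree-one quotient of $\cO^{r}$ by $\cO(-1)$, is $\cO(1)\oplus\cO^{r-2}$ at every point of $Y$ (which also disposes of your uniformity concern without appealing to boundedness of splitting types), whereas you reach the same count $(r-1)n+1$ by Riemann--Roch plus an $\rH^{1}$-vanishing argument.
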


\begin{proof}
By Proposition \ref{prop:wallcrossing}, the wall-crossing is a blow-up if and only if $Y^{-} = Y$ if and only if $\dim \Ext^{1}(\cE^{-}, \cE^{+})$ with respect to $\ba^{-}$ is one for $\cE \in Y^{-}$.

Consider the first case of $\Delta(1, -1, n[1])$. The underlying bundle $E^{+}$ is $\cO(-1)$ and $E^{-} = \cO(1)\oplus \cO^{r-2}$. Thus $\Hom(E^{+}\otimes \cO(-(n-2)),  E^{-}) \cong \rH^{0}(\cO(n) \oplus \cO(n-1)^{r-2})$. Because $J^{i} = [1]$, $\dim \omega_{J^{i}} = 0$. By using Proposition \ref{prop:extdim}, it is straightforward to see that $\dim \Ext^{1}(\cE^{-}, \cE^{+}) = 1$. The blow-up center is obtained from the description in Section \ref{ssec:wallchamber} and the fact that $\rM_{\bp}(1, d, \bfb)$ is a point. The other case is similar.
\end{proof}

Now suppose that $\ba$ is a general small weight such that $\rM_{\bp}(r, 0, \ba) = \Fl(V)^{n}\git_{L_{\ba}} \SL_{r}$ and $\rho(\Fl(V)^{n}\git_{L_{\ba}}\SL_{r}) = (r-1)n$. By scaling up the weight, for some $c > 1$, the weight data $\ba(c)$ crosses either $\Delta(1, -1, n[1])$-wall or $\Delta(r-1, -1, n[r-1])$-wall. Then for $\ba(c+\epsilon)$, $\rM_{\bp}(r, 0, \ba(c+\epsilon))$ has Picard number $(r-1)n + 1$, which is maximal because the moduli stack $\rM_{\bp}(r, 0)$ has the same Picard number. In particular, $\mathrm{Cox}(\rM_{\bp}(r, 0))$ is identified with $\mathrm{Cox}(\rM_{\bp}(r, 0, \ba(c+\epsilon)))$.

\begin{definition}\label{def:dominantweight}
A general parabolic weight $\ba$ is \emph{dominant} if $\rho(\rM_{\bp}(r, 0, \ba)) = (r-1)n + 1$.
\end{definition}

Such a weight is called dominant because any other $\rM_{\bp}(r, 0, \bfb)$ can be obtained as a rational contraction of $\rM_{\bp}(r, 0, \ba)$ (See Section \ref{sec:MMP}). 

The exceptional divisor $Y^{+}$ parametrizes the parabolic bundles with nontrivial underlying bundles (Remark \ref{rem:wallcrossing}). Thus $Y^{+}$ is the generalized theta divisor $\Theta$ described in Section \ref{ssec:conformalblock}. As a conformal block, it is identified with $\VV_{1, \vec{0}}^{\dagger}$.

The Cox ring of the moduli stack $\rM_{\bp}(r, 0)$ can be identified with that of a projective moduli space. 

\begin{proposition}\label{prop:coxringdominantweight}
Let $\ba$ be a dominant weight. Then $\mathrm{Cox}(\rM_{\bp}(r, 0, \ba)) = \mathrm{Cox}(\rM_{\bp}(r, 0))$. 
\end{proposition}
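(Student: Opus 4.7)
The plan is to compare the two Cox rings grade-by-grade via the good moduli space morphism from the open substack of $\ba$-semistable objects, with the main work being a codimension-two estimate for the unstable locus in the ambient stack.

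To match Picard lattices, note that by the definition of dominant and the wall-crossing discussion preceding Definition \ref{def:dominantweight}, $\Pic(\rM_{\bp}(r, 0, \ba))$ has rank $(r-1)n + 1$, equal to that of the stack. There is a natural identification $\Pic(\rM_{\bp}(r, 0)) \cong \Pic(\rM_{\bp}(r, 0, \ba))$ under which the generator $\cL$ of the stack's Picard group matches the class of the generalized theta divisor $\Theta$ (the exceptional divisor of the first scaling wall-crossing, Lemma \ref{lem:firstwallcrossing}) and each flag line bundle $F_{\lambda^{i}}$ matches its counterpart. It then suffices to show $\rH^{0}(\rM_{\bp}(r, 0), F) = \rH^{0}(\rM_{\bp}(r, 0, \ba), F)$ for every $F \in \Pic(\rM_{\bp}(r, 0))$ under this identification.

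Let $\cU := \rM_{\bp}(r, 0)^{ss}(\ba) \subset \rM_{\bp}(r, 0)$ be the open substack of $\ba$-semistable objects and let $q : \cU \to \rM_{\bp}(r, 0, \ba)$ be the good moduli space morphism. Then $q_{*} q^{*} F = F$ yields $\rH^{0}(\rM_{\bp}(r, 0, \ba), F) = \rH^{0}(\cU, q^{*} F) = \rH^{0}(\cU, F|_{\cU})$ under the identification above. The stack $\rM_{\bp}(r, 0)$ is smooth, because $\Ext^{2}(\cE, \cE)$ vanishes as a sheaf cohomology on $\PP^{1}$, just as in the argument preceding Proposition \ref{prop:smooth}. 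Consequently, the desired equality $\rH^{0}(\rM_{\bp}(r, 0), F) = \rH^{0}(\cU, F|_{\cU})$ follows from a Hartogs extension on smooth Artin stacks, provided the complement $\rM_{\bp}(r, 0) \setminus \cU$ has codimension at least two.

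Verifying this codimension bound is the main obstacle. I would stratify the stack by the splitting type of the underlying rank $r$, degree $0$ vector bundle. On the open stratum $\{E \cong \cO^{r}\}$, the $\ba$-unstable locus coincides with the GIT-unstable locus in $\Fl(V)^{n}$ for the linearization $L_{\bd}$ of Theorem \ref{thm:smallweightGIT}, and the incidence-variety dimension count in the proof of Proposition \ref{prop:maxPicnumGIT} (valid when $n > 2r$) gives codimension at least two. On each stratum with $E \not\cong \cO^{r}$, the stratum already lies in codimension at least one; here one uses that $\ba$ is just past the first scaling wall, so that the minimal-slope candidate destabilizing sub- and quotient bundles of types $\cO(-1)$ and $\cO(1) \oplus \cO^{r-2}$ (which control the walls $\Delta(1, -1, n[1])$ and $\Delta(r-1, -1, n[r-1])$) no longer destabilize a generic flag, making the $\ba$-unstable locus a proper closed subset of the stratum. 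Summing over strata gives codimension at least two overall, and summing over $F$ yields the equality of Cox rings.
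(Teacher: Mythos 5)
Your route is genuinely different from the paper's. The paper stays on the variety: it realizes $\rM_{\bp}(r,0,\ba)$ as the blow-up of $\Fl(V)^{n}\git_{L}\SL_{r}$ from the first scaling wall, identifies $\VV^{\dagger}_{\ell,\vec{\lambda}}$ for $\ell\gg 0$ with the classical invariants $V_{\vec{\lambda}}^{\SL_{r}}=\rH^{0}(\overline{F}_{\vec{\lambda}})$, and then matches each graded piece with $\rH^{0}(\pi^{*}\overline{F}_{\vec{\lambda}}-m\Theta)$ by tracking the order of vanishing along $\Theta$. Your argument instead works on the stack, via push-pull along the good moduli space morphism plus a Hartogs extension across a codimension-two unstable locus; this avoids the identification with classical invariants and the $\Theta$-filtration entirely, at the price of the codimension estimate. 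The skeleton of that estimate is sound, and is in fact simpler than you make it: since $\dim\Ext^{1}(E,E)=1$ exactly for the splitting type $\cO(1)\oplus\cO(-1)\oplus\cO^{r-2}$ and is $\ge 2$ for every other nontrivial type, only two strata need any attention.

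Three steps need repair as written, though all are patchable with results already in the paper. First, on the open stratum the $\ba$-unstable locus is \emph{not} the GIT-unstable locus for $L_{\bd}$: a dominant weight lies past the first wall, so Theorem \ref{thm:smallweightGIT} does not apply, and the flags underlying the wall-crossing center $Y^{-}$ are GIT-stable but $\ba$-unstable. You must add that this extra locus has codimension $(r-1)n-2r+1\ge 2$ (the count appearing in Lemma \ref{lem:firstwallcrossing} and the proof of Proposition \ref{prop:canonicaldivsior}). Second, your claim that the unstable locus is a proper closed subset of \emph{every} nontrivial stratum is false --- e.g.\ the stratum of type $\cO(k)\oplus\cO(-k)\oplus\cO^{r-2}$ is entirely unstable for $k\ge n$ --- but those strata already have codimension $\ge 2$, so you only need properness on the unique codimension-one stratum, where it follows because $Y^{+}=\Theta$ is a nonempty divisor consisting precisely of such bundles. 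Third, the claimed identification of Picard lattices is off by a finite index: by Proposition \ref{prop:maxPicnumGIT} the classes descending to the quotient form an index-$r$ sublattice, so your comparison a priori identifies $\Cox(\rM_{\bp}(r,0,\ba))$ only with the sub-sum of $\VV^{\dagger}$ over descendable gradings. You need the additional observation that $\rH^{0}(\rM_{\bp}(r,0),F)=0$ for non-descendable $F$, because the generic stabilizer acts by a nontrivial character on the fiber of $F$ and hence kills every section on the dense stable locus. With these patches the proof goes through and gives a clean alternative to the paper's argument.
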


\begin{proof}
From the inclusion $\rM_{\bp}(r, 0, \ba) \hookrightarrow \rM_{\bp}(r, 0)$, we have a morphism of Picard groups $\mathrm{Pic}(\rM_{\bp}(r, 0)) \to \mathrm{Pic}(\rM_{\bp}(r, 0, \ba))$ and that of Cox rings $r : \mathrm{Cox}(\rM_{\bp}(r, 0)) = \VV^{\dagger} \to \mathrm{Cox}(\rM_{\bp}(r, 0, \ba))$. We claim that $r$ is an isomorphism. 

We may assume that $\rM_{\bp}(r, 0, \ba)$ is the blow-up of $\Fl(V)^{n}\git_{L}\SL_{r}$ where $L$ is a linearization with a maximal stable locus. Let $\pi : \rM_{\bp}(r, 0, \ba) \to \Fl(V)^{n}\git_{L}\SL_{r}$ be the blow-up morphism. The exceptional divisor is $\Theta$. 

Suppose that $s \in \VV_{\ell, \vec{\lambda}}^{\dagger}$. For some $\ell' \gg 0$ determined by $\vec{\lambda}$, $\VV_{\ell', \vec{\lambda}}^{\dagger} \cong V_{\vec{\lambda}}^{\SL_{r}}$, the space of classical invariant factors. Note that $V_{\vec{\lambda}}^{\SL_{r}}$ is the space of global sections of the descent $\overline{F}_{\vec{\lambda}}$ on $\Fl(V)^{n}\git_{L}\SL_{r}$ of $F_{\vec{\lambda}} := \otimes_{i=1}^{n}F_{\lambda^{i}}$. If $\ell \ge \ell'$, so $\VV_{\ell, \vec{\lambda}}^{\dagger} \cong \VV_{\ell', \vec{\lambda}}^{\dagger} = V_{\vec{\lambda}}^{\SL_{r}}$, then $s$ can be regarded as a section of $\rH^{0}(\rM_{\bp}(r, 0, \ba), \pi^{*}\overline{F}_{\vec{\lambda}} + (\ell - \ell')\Theta) \cong \rH^{0}(\rM_{\bp}(r, 0, \ba), \pi^{*}\overline{F}_{\vec{\lambda}})$. If $\ell < \ell'$, $s$ can be regarded as an invariant in $V_{\vec{\lambda}}^{\SL_{r}}$ which vanishes along $\Theta$ with multiplicity $\ell' - \ell$. Thus it is a section of $\rH^{0}(\rM_{\bp}(r, 0, \ba), \pi^{*}\overline{F}_{\lambda} - (\ell' - \ell)\Theta)$. 

Since any line bundle on $\rM_{\bp}(r, 0, \ba)$ with a nonzero global section can be written uniquely as $\pi^{*}\overline{F}_{\lambda} - m\Theta$ for some $\vec{\lambda}$ and $m \in \ZZ$, we obtain the desired result.
\end{proof}


\section{The moduli space is a Mori dream space}\label{sec:MDS}

In this section, we prove Theorem \ref{thm:Fanotype}. The finite generation of $\VV^{\dagger}$ (Theorem \ref{thm:fingen}) follows immediately.

\begin{theorem}\label{thm:Fanotype}
For any rank $r$ and a general parabolic weight $\ba$, $\rM_{\bp}(r, 0, \ba)$ is of Fano type.
\end{theorem}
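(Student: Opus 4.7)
The plan is to single out one particularly accessible weight---a dominant weight that sits just above the first scaling wall---and to prove Fano type there by combining the blow-up description of $\rM_{\bp}(r, 0, \ba)$ from Lemma~\ref{lem:firstwallcrossing} with positivity estimates on the GIT quotient $\Fl(V)^{n}\git_{L}\SL_{r}$; the remaining weights are reduced to this case by the wall-crossing calculus of Section~\ref{sec:wallcrossing}.

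First I would record smoothness of $\rM := \rM_{\bp}(r, 0, \ba)$ via Proposition~\ref{prop:smooth}. Since being of Fano type is invariant under small $\QQ$-factorial modifications, and by Proposition~\ref{prop:wallcrossing} any wall-crossing that preserves the Picard number has wall-crossing centers of codimension at least two, every wall-crossing between two dominant weights (Definition~\ref{def:dominantweight}) is an SQM. Thus it suffices to verify Fano type for one dominant $\ba$. The non-dominant case is handled separately, since by Theorem~\ref{thm:smallweightGIT} the moduli space is then a GIT quotient of the Fano variety $\Fl(V)^{n}$, hence automatically of Fano type (in fact log Fano).

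For a well-chosen dominant weight, Lemma~\ref{lem:firstwallcrossing} realizes $\rM$ as a smooth blow-up $\pi:\rM\to X := \Fl(V)^{n}\git_{L}\SL_{r}$ along a smooth center $Z\cong \rM_{\bp}(r-1,\pm 1,\cdot)$ of some codimension $c$, with exceptional divisor the generalized theta divisor $\Theta$. Since $-K_{\Fl(V)}$ is ample and $L$ has a maximal stable locus (Definition~\ref{def:maxstlocus}, Lemma~\ref{stabilizer}), $-K_{X}$ is ample on the klt quotient. The blow-up formula gives
\[
    -K_{\rM} \;=\; \pi^{*}(-K_{X}) - (c-1)\,\Theta,
\]
and standard positivity says $\pi^{*}(-K_{X}) - \delta\,\Theta$ is ample for small $\delta>0$. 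Bigness of $-K_{\rM}$ will follow once I show the maximal such $\delta$ exceeds $c-1$. I would then run the $-K$-MMP on $\rM$, which terminates by BCHM since $\rM$ is smooth projective with $-K$ big, arriving via finitely many flips at a birational model $\rM'$ on which $-K_{\rM'}$ is big and nef. Adding a small ample perturbation produces a klt weak log Fano pair on $\rM'$, so $\rM'$ is of Fano type; SQM-invariance transfers the conclusion back to $\rM$.

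The main obstacle will be the quantitative positivity estimate---verifying that the bigness threshold $\delta_{0}$ of $\pi^{*}(-K_{X}) - \delta\,\Theta$ dominates $c-1$. I plan to attack it by intersecting both sides against explicit test curves on $X$ coming from the Borel--Weil description of line bundles on $\Fl(V)^{n}$, using induction on $r$ via the identification $Z\cong \rM_{\bp}(r-1,\pm 1,\cdot)$, and exploiting the projective bundle structure of $\Theta\to Z$ established in Section~\ref{sec:wallcrossing} to control positivity along the exceptional fibers.
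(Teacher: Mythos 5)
Your architecture---realize $\rM$ for a dominant weight as the blow-up $\pi:\rM\to X=\Fl(V)^{n}\git_{L}\SL_{r}$ along $\rM_{\bp}(r-1,\pm 1,\cdot)$, write $-K_{\rM}=\pi^{*}(-K_{X})-(c-1)\Theta$, prove $-K_{\rM}$ nef and big, and propagate to the remaining weights---is the paper's architecture. But the decisive step, positivity of $-K_{\rM}$, is precisely the part you defer to ``test curves and induction on $r$,'' and that is where the paper's one essential idea lives: combining Lemma \ref{lem:KGIT} with the blow-up formula gives $\rH^{0}(-K_{\rM})=\VV_{2r,(\lambda,\dots,\lambda)}^{\dagger}$ with $\lambda=2\sum_{j}\omega_{j}$ (Proposition \ref{prop:canonicaldivsior}), and Pauly's Theorem \ref{thm:Pauly} identifies this class with the ample theta divisor attached to the central weight $\ba_{c}$ with $a_{\bullet}^{i}=\tfrac{1}{r}(r-1,\dots,1)$. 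Taking the dominant weight to be a small perturbation $\ba_{d}$ of $\ba_{c}$ makes $-K_{\rM}$ a limit of ample classes, hence nef for free, and bigness reduces to the combinatorial check that $\ba_{c}$ is not a boundary wall, carried out (for $n\gg 0$) by the slope computation in Proposition \ref{prop:adisdominant}. Without this identification your threshold inequality $\delta_{0}>c-1$ has no handle; note also that $-K_{X}$ is the descent of (twice) the symmetric polarization, which for the linearization $L$ you are forced to use is a priori only nef and semiample rather than ample, so even ``$\pi^{*}(-K_{X})-\delta\Theta$ is ample for small $\delta$'' is unjustified. Your fallback---run a $-K$-MMP and transfer Fano type back---is also not sound: bigness of $-K$ alone neither sets up a BCHM MMP nor guarantees it proceeds by flips only, and Fano type does not transfer backwards across a divisorial contraction.

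Two reductions would also fail as written. ``Non-dominant $\Rightarrow$ small-weight GIT quotient'' is false: non-dominance only says $\rho<(r-1)n+1$, and such chambers include, for instance, those near $\partial\Eff$ where the model is a projective bundle over a product of smaller moduli spaces (Proposition \ref{prop:boundarywall}), not a quotient covered by Theorem \ref{thm:smallweightGIT}; and a GIT quotient of a Fano variety is not ``automatically'' of Fano type. The paper instead observes that every $\rM_{\bp}(r,0,\bfb)$ is reached from $\rM_{\bp}(r,0,\ba_{d})$ by flips and blow-downs only, never blow-ups, and invokes \cite{GOST15} to push Fano type forward along such maps. Finally, the case of small $n$ (where no linearization has a maximal stable locus and Proposition \ref{prop:maxPicnumGIT} fails) is not addressed; the paper handles it by exhibiting $\rM_{\bp}(r,0,\ba)$ as the image of $\rM_{\bq}(r,0,\ba')$ for an enlarged configuration $\bq$ (Lemma \ref{lem:extension}) and applying \cite{GOST15} again.
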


Recall that a $\QQ$-factorial normal varieity $X$ is of \emph{Fano type} if there is an effective $\QQ$-divisor $\Delta$ such that $-(K_{X}+\Delta)$ is ample and $(X, \Delta)$ is a klt pair.

By \cite[Corollary 1.3.2]{BCHM10}, a $\QQ$-factorial normal variety of Fano type is a Mori dream space.

\begin{corollary}\label{cor:MDS}
For any general parabolic weight $\ba$, $\rM_{\bp}(r, 0, \ba)$ is a Mori dream space.
\end{corollary}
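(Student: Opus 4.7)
The plan is to construct an effective $\QQ$-divisor $\Delta$ making $(\rM, \Delta)$ a klt pair with $-(K_{\rM} + \Delta)$ ample, where $\rM := \rM_{\bp}(r, 0, \ba)$. Since $\rM$ is smooth by Proposition \ref{prop:smooth}, the klt condition reduces to arranging that $\Delta$ has coefficients $<1$. The standard reduction is to show that some model $\rM'$ related to $\rM$ by small $\QQ$-factorial modifications has $-K_{\rM'}$ big and nef: Kodaira's lemma then gives $-K_{\rM'} \sim_{\QQ} A + E$ with $A$ an ample $\QQ$-divisor and $E$ effective, whence $-(K_{\rM'} + \epsilon E) = (1-\epsilon)(-K_{\rM'}) + \epsilon A$ is ample (nef plus ample) for small $\epsilon > 0$, while $(\rM', \epsilon E)$ is klt. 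Fano type is preserved under SQMs, so this suffices.

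The first step is to reduce to a dominant $\ba$. Proposition \ref{prop:coxringdominantweight} identifies the Cox ring of $\rM_{\bp}(r, 0, \ba)$ for dominant $\ba$ with that of the stack, and every other $\rM_{\bp}(r, 0, \bfb)$ arises as a projective contraction with connected fibers; since Fano type descends along such contractions (Fujino--Gongyo), it suffices to handle the dominant case. For dominant $\ba$, I would compute $K_{\rM}$ inductively along the scaling wall-crossings of Section \ref{sec:wallcrossing}, starting from the small-weight GIT presentation $\rM_{\bp}(r, 0, \ba(\epsilon)) \cong \Fl(V)^{n} \git_{L_{\ba(\epsilon)}} \SL_{r}$ of Theorem \ref{thm:smallweightGIT}, whose canonical class is the descent of the very anti-ample $K_{\Fl(V)^{n}}$. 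Lemma \ref{lem:firstwallcrossing} identifies the first wall-crossing as a smooth blow-up, contributing via the formula $K_{Y} = \pi^{*}K_{X} + (c-1)E$; subsequent simple wall-crossings are controlled by Proposition \ref{prop:wallcrossing} together with the $\Ext^{1}$-dimension formula of Proposition \ref{prop:extdim}, each modifying $K$ by a combination of the corresponding wall-crossing centers. Accumulating these contributions yields an explicit expression for $K_{\rM}$ (dominant $\ba$) as the pullback of a very anti-ample class plus tracked $\Theta$-type exceptional divisors; in particular, $-K_{\rM}$ is big.

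Finally, I would run $(-K)$-MMP on this dominant model: each step is a flip crossing one of the stability walls of Section \ref{ssec:wallchamber} while preserving the Picard rank, and there are only finitely many walls to cross. The main obstacle is verifying termination with the desired positivity. Bigness follows from the very negative pullback dominating the tracked exceptional contributions, but nefness requires ordering the flips so that after each crossing the discrepancies balance correctly against the extremal rays produced by Proposition \ref{prop:extdim}. This combinatorial accounting -- matching the codimensions $\dim Y^{\pm}$ and the signs of the $\Theta$-contributions along each wall $\Delta(s, d, \cJ)$ to the newly stabilized extremal curves -- is the technical heart of the argument.
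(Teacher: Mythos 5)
Your overall skeleton (exhibit a dominant-weight model on which $-K$ is big and nef, conclude weak Fano hence Fano type via Kodaira's lemma, propagate Fano type to the other weights by contractions, and finally invoke BCHM to get the Mori dream space property) matches the paper's route through Theorem \ref{thm:Fanotype}. But the decisive step is missing. You defer the nefness of $-K$ to a ``combinatorial accounting'' of discrepancies along a $(-K)$-MMP and acknowledge that this is the main obstacle; as written there is no argument that such an ordering of flips exists, terminates, or ends with $-K$ nef. The paper does not run an MMP at all. Instead it identifies the anticanonical class concretely: after the first (blow-up) wall-crossing one has $\rH^{0}(-K_{\rM}) = \VV^{\dagger}_{2r,(\lambda,\ldots,\lambda)}$ with $\lambda = 2\sum_{j}\omega_{j}$ (Proposition \ref{prop:canonicaldivsior}), and Pauly's theorem (Theorem \ref{thm:Pauly}) says precisely that this conformal block divisor is \emph{ample} on $\rM_{\bp}(r,0,\ba_{c})$ for the central weight $\ba_{c} = \frac{1}{r}(r-1,\ldots,1)$. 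Perturbing $\ba_{c}$ to a general dominant $\ba_{d}$ then makes $-K$ nef as a limit of ample classes, with Proposition \ref{prop:adisdominant} ensuring that $\ba_{d}$ is indeed dominant. Without this identification of $-K$ with the theta-type divisor of a specific weight, your nefness step has no content.

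Two further gaps: bigness of $-K$ is not a matter of ``the very negative pullback dominating the exceptional contributions''; in the paper it is a separate numerical check that the boundary-wall condition $\sum_{i}\dim\omega_{J^{i}} = (n-1)s(r-s)+dr$ cannot hold for $n \gg 0$. And you do not treat small $n$: the GIT presentation with maximal stable locus requires $n > 2r$, and the paper handles $n \le 2r$ by realizing $\rM_{\bp}(r,0,\ba)$ as the image of $\rM_{\bq}(r,0,\ba')$ for a larger point configuration (Lemma \ref{lem:extension}) and applying \cite[Corollary 1.3]{GOST15}. Finally, note that the corollary itself, granting Theorem \ref{thm:Fanotype}, is a one-line application of \cite[Corollary 1.3.2]{BCHM10}; everything you propose is really an attempt at that theorem, and it is there that the gap lies.
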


\begin{theorem}\label{thm:fingen}
The algebra $\VV^{\dagger}$ of conformal blocks is finitely generated.
\end{theorem}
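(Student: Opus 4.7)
The plan is to deduce Theorem \ref{thm:fingen} as a direct consequence of Theorem \ref{thm:Fanotype} together with Proposition \ref{prop:coxringdominantweight}, with essentially no new work required. Concretely, I would choose a parabolic weight $\ba$ that is both general and dominant in the sense of Definition \ref{def:dominantweight}; the existence of such an $\ba$ is guaranteed by the scaling argument preceding Definition \ref{def:dominantweight} (start with a general small weight giving a GIT quotient of maximal Picard number $(r-1)n$, then scale past the first wall $\Delta(1,-1,n[1])$ or $\Delta(r-1,-1,n[r-1])$, which by Lemma \ref{lem:firstwallcrossing} is a blow-up, so the Picard number jumps to the maximum $(r-1)n+1$).

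Next, I would invoke Theorem \ref{thm:Fanotype} to conclude that $\rM_{\bp}(r,0,\ba)$ is of Fano type, and then Corollary \ref{cor:MDS} (or equivalently \cite[Corollary 1.3.2]{BCHM10}) to conclude that $\rM_{\bp}(r,0,\ba)$ is a Mori dream space. By \cite[Proposition 2.9]{HK00}, the Cox ring of any Mori dream space is finitely generated as a $\CC$-algebra. Hence $\Cox(\rM_{\bp}(r,0,\ba))$ is finitely generated.

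Finally, since $\ba$ was chosen to be dominant, Proposition \ref{prop:coxringdominantweight} yields the identification
\[
\VV^{\dagger} \;=\; \Cox(\rM_{\bp}(r,0)) \;\cong\; \Cox(\rM_{\bp}(r,0,\ba)),
\]
so $\VV^{\dagger}$ inherits finite generation from the right-hand side, completing the proof.

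There is essentially no obstacle at this stage: all the genuine content — the Fano-type property (requiring the wall-crossing analysis of Section \ref{sec:wallcrossing} and the computation of the canonical class via the GIT description of Section \ref{sec:smallweight}) and the identification of $\VV^{\dagger}$ with the Cox ring of a projective moduli space (requiring the blow-up description of the theta divisor and the classical invariant factor interpretation for large level) — has already been carried out in Theorem \ref{thm:Fanotype} and Proposition \ref{prop:coxringdominantweight}. The only subtle point worth flagging explicitly is that one must choose $\ba$ to be \emph{dominant}, not merely general, so that Proposition \ref{prop:coxringdominantweight} applies and the Cox ring of the projective moduli space genuinely recovers the full algebra $\VV^{\dagger}$ rather than a proper subalgebra.
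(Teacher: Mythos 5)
Your argument coincides with the paper's own proof in the case $n > 2r$, but it has a genuine gap: it silently assumes that a dominant weight exists for every $n$, and this is exactly where the hypothesis $n > 2r$ enters. The existence of a dominant weight is produced by starting from a small weight $\bfb$ with $\rho(\rM_{\bp}(r,0,\bfb)) = (r-1)n$ and blowing up via the first scaling wall-crossing; but that starting point is supplied by Proposition \ref{prop:maxPicnumGIT}, which is only proved (and is only true --- see the remark following Definition \ref{def:maxstlocus}, e.g.\ $r=2$, $n=4$ gives $\PP^{1}$) under the assumption $n > 2r$. For $3 \le n \le 2r$ there may be no weight $\ba$ with $\rho(\rM_{\bp}(r,0,\ba)) = (r-1)n+1$, so Proposition \ref{prop:coxringdominantweight} cannot be invoked and $\VV^{\dagger}$ need not be the Cox ring of any of the projective moduli spaces $\rM_{\bp}(r,0,\ba)$. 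Your proof therefore does not establish the theorem for small $n$.

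The paper closes this gap with an additional step you would need to supply: by Lemma \ref{lem:extension} one enlarges the point configuration to $\bq \supset \bp$ with $|\bq| > 2r$, obtains finite generation of $\Cox(\rM_{\bq}(r,0,\ba'))$ by the argument you gave, and then identifies $\Cox(\rM_{\bp}(r,0,\ba))$ with the ring of invariants $\Cox(\rM_{\bq}(r,0,\ba'))^{H}$ for a torus $H = \Hom(\Pic(\rM_{\bq}(r,0,\ba'))/\Pic(\rM_{\bp}(r,0,\ba)), \CC^{*})$, using propagation of vacua \cite[Theorem 3.15]{Uen08} to match the graded pieces; finite generation then follows because a torus-invariant subring of a finitely generated algebra is finitely generated \cite[Theorem 3.3]{Dol03}. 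With that reduction added, your proof becomes the paper's.
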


\begin{proof}
By Proposition \ref{prop:coxringdominantweight}, the Cox ring of the moduli stack $\rM_{\bp}(r, 0)$ is the same with that of $\rM_{\ba}(r, 0, \ba)$ if $\ba$ is dominant. If $n > 2r$, by Theorem \ref{thm:smallweightGIT} and Proposition \ref{prop:maxPicnumGIT}, there is a small weight $\bfb$ such that $\rho(\rM_{\bp}(r, 0, \bfb)) = (r-1)n$. During the scaling wall-crossing, the first wall-crossing is a blow-up. Thus there is a dominant weight $\ba = \bfb(c)$. Then $\mathrm{Cox}(\rM_{\bp}(r, 0)) = \mathrm{Cox}(\rM_{\bp}(r, 0, \ba))$ is finitely generated by Corollary \ref{cor:MDS} and \cite[Theorem 2.9]{HK00}.

When $n$ is small, by Lemma \ref{lem:extension}, for a sufficiently large point configuration $\bq \supset \bp$, there is a morphism $\rM_{\bq}(r, 0, \ba') \to \rM_{\bp}(r, 0, \ba)$ for some $\ba'$. Then $\Pic(\rM_{\bp}(r, 0, \ba))$ is a subgroup of $\Pic(\rM_{\bq}(r, 0, \ba'))$. Let $H := \Hom(\Pic(\rM_{\bq}(r, 0, \ba'))/\Pic(\rM_{\bp}(r, 0, \ba)), \CC^{*})$. There is a natural action of $H$ on $\mathrm{Cox}(\rM_{\bq}(r, 0, \ba'))$ and by propagation of vacua (\cite[Theorem 3.15]{Uen08}), 
\[
	\mathrm{Cox}(\rM_{\bp}(r, 0, \ba)) \cong 
	\bigoplus_{\ell, \vec{\lambda}}\VV_{\ell, (\lambda^{1}, \lambda^{2}, 
	\cdots, \lambda^{n})}^{\dagger} \cong 
	\bigoplus_{\ell, \vec{\lambda}}
	\VV_{\ell, (\lambda^{1}, \lambda^{2}, \cdots, \lambda^{n}, 
	0, 0, \cdots, 0)}^{\dagger} \cong
	\mathrm{Cox}(\rM_{\bq}(r, 0, \ba'))^{H}.
\]
A torus-invariant subring of a finitely generated algebra is finitely generated, too (\cite[Theorem 3.3]{Dol03}).
\end{proof}

\begin{lemma}\label{lem:extension}
Let $\ba$ be a general effective parabolic weight. Then for any finite point configuration $\bq \supset \bp$, there is a parabolic weight $\ba'$ such that there is a morphism $\rM_{\bq}(r, 0, \ba') \to \rM_{\bp}(r, 0, \ba)$. 
\end{lemma}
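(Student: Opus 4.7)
The plan is to construct $\ba'$ by extending $\ba$ with very small generic weights at the new parabolic points in $\bq \setminus \bp$, and to produce the desired morphism via the universal property of $\rM_{\bp}(r, 0, \ba)$. Concretely, I set $\ba'$ to agree with $\ba$ at the points of $\bp$, and at each new point $q \in \bq \setminus \bp$ prescribe a strictly decreasing positive sequence with all entries bounded above by a small parameter $\epsilon > 0$; after a further small perturbation $\ba'$ is general on $\bq$.

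Next, I would consider the universal family on $\rM_{\bq}(r, 0, \ba')$ (after passing to a suitable cover if necessary). Forgetting the flag data at the points of $\bq \setminus \bp$ and reassigning the parabolic weight $\ba$ yields a family of parabolic bundles on $(\PP^{1}, \bp)$. To induce the classifying morphism $\rM_{\bq}(r, 0, \ba') \to \rM_{\bp}(r, 0, \ba)$, it suffices to show that every fiber of this new family is $\ba$-semistable. Thus the problem reduces to showing that whenever $\cE = (E, \{W^{i}_{\bullet}\}_{i \in \bq}, \ba')$ is $\ba'$-semistable, the restriction $\cE_{\bp} := (E, \{W^{i}_{\bullet}\}_{i \in \bp}, \ba)$ is $\ba$-semistable.

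For any subbundle $F \subset E$, let $\cF_{\bp}$ and $\cF_{\bq}$ denote the induced parabolic subbundles with respect to $\ba$ and $\ba'$ respectively. A direct slope computation gives $|\mu_{\ba'}(\cF_{\bq}) - \mu_{\ba}(\cF_{\bp})| \leq C\epsilon$ and $|\mu_{\ba'}(\cE) - \mu_{\ba}(\cE_{\bp})| \leq C\epsilon$, where $C$ depends only on $r$ and $|\bq \setminus \bp|$. Arguing by contradiction, if $\cF_{\bp}$ destabilizes $\cE_{\bp}$, then $\mu_{\ba}(\cF_{\bp}) > \mu_{\ba}(\cE_{\bp})$; combined with $\mu_{\ba'}(\cF_{\bq}) \leq \mu_{\ba'}(\cE)$ from $\ba'$-semistability, one obtains $0 < \mu_{\ba}(\cF_{\bp}) - \mu_{\ba}(\cE_{\bp}) \leq 2C\epsilon$. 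The plan is then to choose $\epsilon$ smaller than some uniform positive gap $\delta$.

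The main obstacle is to establish such a uniform gap $\delta > 0$. Since $\rM_{\bq}(r, 0, \ba')$ is a projective moduli space and $\PP^{1}$ is fixed, the set of possible underlying rank $r$ degree $0$ bundles $E$ is bounded, and hence so is the set of numerical types of parabolic subbundles $\cF_{\bp}$, which are determined by the rank and degree of $F$ together with the intersection combinatorics $J^{i} \subset [r]$ with the flags at the points $p^{i} \in \bp$. For each fixed numerical type, $\mu_{\ba}(\cF_{\bp}) - \mu_{\ba}(\cE_{\bp})$ is a single rational linear expression in the entries of $\ba$, which is nonzero by the genericity of $\ba$. Taking $\delta$ to be the minimum positive value over this finite collection and choosing $\epsilon < \delta/(2C)$ completes the argument.
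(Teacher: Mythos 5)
Your proposal is correct and follows essentially the same route as the paper: extend $\ba$ by sufficiently small generic weights at the points of $\bq\setminus\bp$ and show the forgetful map is a regular morphism. The paper simply asserts that small weights at the new points do not affect the stability inequalities, whereas you supply the uniform-gap justification (finitely many relevant numerical types of destabilizing subbundles, genericity of $\ba$, and $\epsilon$ chosen below the minimal positive gap), which is the right way to make that assertion precise.
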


\begin{proof}
Suppose that $\bp = (p^{1}, p^{2}, \cdots, p^{n})$ and $\bq = (p^{1}, p^{2}, \cdots, p^{n+m})$. Let $\ba'$ be a parabolic weight such that ${a'}_{\bullet}^{i} = a_{\bullet}^{i}$ for $i \le n$ and ${a'}_{j}^{i}$ are sufficiently small for $i > n$. There is a natural `forgetful' map
\begin{eqnarray*}
	\rM_{\bp}(r, 0, \ba') & \to & \rM_{\bq}(r, 0, \ba)\\
	(E, \{W_{\bullet}^{i}\}, \ba') & \mapsto &
	(E, \{W_{\bullet}^{i}\}_{i \le n}, \ba).
\end{eqnarray*}
This map is regular, because small weights $(a_{\bullet}^{i})_{i > n}$ do not affect on the inequalities for stability. 
\end{proof}

\begin{remark}
The proof of Theorem \ref{thm:fingen} does not provide any explicit set of generators. When  $r \le 3$ and $\bp$ is a generic configuration of points, by using a degeneration method, Manon showed that the set of $r^{n-1}$ level one conformal blocks generates $\VV^{\dagger}$ (\cite[Theorem 1.5]{Man09b}, \cite[Theorem 3]{Man13}). For $r \ge 4$, the set of level one conformal blocks is insufficient to generate $\VV^{\dagger}$. We expect that the generic configuration assumption is not essential.
\end{remark}

The remaining part of this section is devoted to the proof of Theorem \ref{thm:Fanotype}. We start with the computation of the canonical divisor. $\overline{\cO}(D)$ denotes the descent of a line bundle $\cO(D)$ on $X$ to the GIT quotient $X\git G$.

\begin{lemma}\label{lem:KGIT}
Let $L$ be a general linearization on $\Fl(V)^{n}$ with a maximal stable locus. The canonical divisor $K$ of $\Fl(V)^{n}\git_{L} \SL_{r}$ is
\[
	\otimes_{i=1}^{n}\pi_{i}^{*}\overline{\cO}(-2,-2,\cdots,-2).
\]
\end{lemma}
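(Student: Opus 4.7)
The approach is to pull the canonical class back to $\Fl(V)^n$, where it is classical, and then descend it via Kempf's lemma. The canonical bundle of the full flag variety $\Fl(V)=\SL_r/B$ is $\cO(-2\rho)$, where $\rho=\omega_1+\cdots+\omega_{r-1}$ is the sum of fundamental weights; in the notation of Section \ref{ssec:conformalblock} this reads $\cO(-2,-2,\ldots,-2)$. By the product formula,
\[
K_{\Fl(V)^n}=\bigotimes_{i=1}^n\pi_i^*\cO(-2,-2,\ldots,-2),
\]
and its $\SL_r$-linearization is unique since $\Hom(\SL_r,\CC^*)=0$.

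Next I would descend this line bundle to the GIT quotient via Kempf's descent lemma. By Lemma \ref{stabilizer} every stable orbit has stabilizer equal to the group of $r$-th roots of unity sitting in the center of $\SL_r$, so it suffices to check that a central element $\zeta I_r$ with $\zeta^r=1$ acts trivially on the fiber of $\cO(-2\rho)$ at any stable point. Since $\omega_i(\zeta I_r)=\zeta^i$, the weight $-2\rho$ evaluates to $\zeta^{-2(1+\cdots+(r-1))}=\zeta^{-r(r-1)}=1$. Hence $K_{\Fl(V)^n}$ descends to the line bundle $D:=\bigotimes_i\pi_i^*\overline{\cO}(-2,-2,\ldots,-2)$ on $\Fl(V)^n\git_L\SL_r$.

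It remains to identify $D$ with the canonical class of the quotient. Because $L$ has a maximal stable locus (Definition \ref{def:maxstlocus}), the stable locus $U:=(\Fl(V)^n)^s(L)$ has complement of codimension $\geq 2$ in $\Fl(V)^n$, and because the scalars in $\SL_r$ act trivially on flag varieties, the induced $\mathrm{PGL}_r$-action on $U$ is free by Lemma \ref{stabilizer}. Thus $\pi:U\to\bar U:=U/\mathrm{PGL}_r$ is a principal $\mathrm{PGL}_r$-bundle, and since $\mathrm{PGL}_r$ is semisimple (hence unimodular), its adjoint representation has trivial determinant, so the relative dualizing sheaf $K_{U/\bar U}$ is trivial. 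Therefore $K_U=\pi^*K_{\bar U}$, which combined with the formula for $K_{\Fl(V)^n}$ above forces $K_{\bar U}=D|_{\bar U}$; since $\bar U$ has codimension $\geq 2$ complement in the normal variety $\Fl(V)^n\git_L\SL_r$, the equality $K_{\Fl(V)^n\git_L\SL_r}=D$ extends by normality. The main technical hurdle is the triviality of $K_{U/\bar U}$, which rests on the unimodularity of $\mathrm{PGL}_r$; one could alternatively use an \'etale slice argument or a direct local computation to sidestep it.
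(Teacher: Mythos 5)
Your proof is correct, but it takes a genuinely different route from the paper's. You compute $K_{\Fl(V)^{n}}$ directly from the classical formula $K_{G/B}=\cL_{-2\rho}$, verify the Kempf descent condition on the central stabilizer $\mu_{r}$ (the character $-2n\rho$ kills $\zeta I_{r}$ since $\zeta^{r(r-1)}=1$), and then transfer the canonical class through the quotient by observing that the stable locus $U$ is a principal $\mathrm{PGL}_{r}$-bundle over $\overline{U}$, so that the vertical tangent bundle is trivialized by fundamental vector fields and its determinant is equivariantly trivial because $\mathrm{PGL}_{r}$ has no nontrivial characters; hence $K_{U}=\pi^{*}K_{\overline{U}}$. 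The paper instead first writes $K=\otimes_{i}\pi_{i}^{*}\overline{\cO}(b_{1},\ldots,b_{r-1})$ (implicitly using Proposition \ref{prop:maxPicnumGIT} to know $\Pic$ of the quotient sits inside $\ZZ^{(r-1)n}$) and pins down each $b_{j}=-2$ by intersecting with an explicit Schubert test curve $C_{j}^{i}$: the Euler-type sequence gives $\deg\cT_{\Fl(V)}|_{\widetilde{C}_{j}^{i}}=2$, and the relative tangent bundle of the quotient map is trivial along the freely moving curve. So both arguments rest on the same geometric fact---triviality of the vertical tangent directions over the free locus---but you use it globally while the paper only needs it curve by curve; the paper's version is more elementary in that it re-derives the anticanonical degree on Schubert lines rather than quoting $K_{G/B}=-2\rho$, while yours is shorter and avoids constructing test curves (which is where the paper actually uses the maximal-stable-locus hypothesis, to keep the curves inside the stable locus). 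Two small remarks: your final step of ``extending by normality over a codimension $\geq 2$ complement'' is vacuous, since for a general linearization semistable equals stable and $\overline{U}$ already \emph{is} the whole GIT quotient --- the codimension-two condition concerns the complement of $U$ in $\Fl(V)^{n}$ and is what makes the notation $\overline{\cO}(-2,\ldots,-2)$ unambiguous; and the triviality of $K_{U/\overline{U}}$ is not really a technical hurdle needing an \'etale slice argument, for exactly the reason you give.
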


\begin{proof}
Because the canonical divisor is $S_{n}$-invariant, we have $K = \otimes_{i=1}^{n}\pi_{i}^{*}\overline{\cO}(b_{1},\cdots,b_{r-1})$. Let $\widetilde{C}_{j}^{i} \cong \PP^{1} \subset \Fl(V)^{n}$ be a Schubert curve that is obtained by taking a family of parabolic bundles $(\cO^{r}, \{W_{\bullet}^{i}\})$ such that $W_{\bullet}^{l}$ for $l\ne i$ and $W_{k}^{i}$ for $k\ne j$ are fixed and general, but $W_{j}^{i}$ is varying as a one-dimensional family of subspaces in $W_{j+1}^{i}$ containing $W_{j-1}^{i}$.Since $L$ is a linearization with a maximal stable locus, $\widetilde{C}_{j}^{i}$ does not intersect the unstable locus whose codimension is at least two. Thus by taking its image in $\Fl(V)^{n}\git_{L}\SL_{r}$, we have a curve $C_{j}^{i}$. By projection formula, $C_{j}^{i}\cdot \otimes_{i=1}^{n}\pi_{i}^{*}\overline{\cO}(b_{1},\cdots,b_{r-1}) = b_{j}$.

Note that $\widetilde{C}_{j}^{i}$ is indeed a curve on a fiber of the projection map $p : \Fl(V)^{n} \to \Fl(V)^{n-1}$ which forgets the $i$-th factor. The tangent bundle of the $i$-th factor $\Fl(V)$ is the quotient
\[
	0 \to \Hom_{F_{\cdot}}(V, V)\otimes \cO \to \Hom(V, V) \otimes \cO
	\to \cT_{\Fl(V)} \to 0
\]
where $\Hom_{F\cdot}(V, V)$ is the space of endomorphisms which preserve the flag. The restriction of the sequence to $\widetilde{C}_{j}^{i}$ is isomorphic to
\[
	0 \to \cO^{(r^{2}+r-4)/2}\oplus \cO(-1)^{2} \to \cO^{r^2} \to
	\cT_{\Fl(V)}|_{\widetilde{C}_{j}^{i}} \to 0.
\]
So $\deg \cT_{\Fl(V)}|_{\widetilde{C}_{j}^{i}} = 2$, and thus $\deg \cT_{\Fl(V)^{n}}|_{\widetilde{C}_{j}^{i}} = 2$. By the $\SL_{r}$-action, $\widetilde{C}_{j}^{i}$ deforms without fixed points. Thus along the fiber of the quotient map, the restriction of the tangent bundle is trivial. Therefore $\deg \cT_{\Fl(V)^{n}\git_{L}\SL_{r}}|_{C_{j}^{i}} = 2$. So $b_{j} = C_{j}^{i}\cdot K = -2$.
\end{proof}

\begin{corollary}\label{cor:Kconformal}
For a general small weight $\ba$, $\rH^{0}(-K) = \VV_{(r-1)n,(\lambda, \lambda, \cdots, \lambda)}^{\dagger}$ where $\lambda = 2(\sum_{j=1}^{r-1}\omega_{j})$.
\end{corollary}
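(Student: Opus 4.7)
My strategy is to combine Theorem~\ref{thm:smallweightGIT} and Lemma~\ref{lem:KGIT} to express $-K$ explicitly on the GIT model, compute its global sections by descent and Borel-Weil, and then match the result with a conformal block via the stabilization cited in Section~\ref{ssec:conformalblock}.

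First, I choose $\ba$ to be a general small parabolic weight whose associated difference data $\bd$ is a small perturbation of a positive constant multiple of $(1,1,\ldots,1)$. Because positive scaling does not alter the GIT chamber, such a $\bd$ satisfies both the smallness inequality~\eqref{eqn:smallweight} of Theorem~\ref{thm:smallweightGIT} and makes $L_\bd$ a linearization with a maximal stable locus in the sense of Proposition~\ref{prop:maxPicnumGIT}. Then
\[
    \rM_\bp(r, 0, \ba) \cong \Fl(V)^n \git_{L_\bd} \SL_r,
\]
and Lemma~\ref{lem:KGIT} yields $-K = \otimes_{i=1}^n \pi_i^* \overline{\cO}(2,2,\ldots,2)$. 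The total weight $nr(r-1)$ is divisible by $r$, so the $\mu_r$-stabilizers of Lemma~\ref{stabilizer} act trivially on fibers, hence $\otimes_{i=1}^n \pi_i^* \cO(2,\ldots,2)$ descends to $-K$ and its $\SL_r$-invariant sections compute $\rH^0(-K)$.

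By the Borel-Weil identification recalled in Section~\ref{ssec:conformalblock}, $\cO(2,\ldots,2)$ on $\Fl(V)$ is the irreducible $\SL_r$-representation $V_\lambda$ of highest weight $\lambda = 2\sum_{j=1}^{r-1} \omega_j$. Künneth together with taking invariants gives
\[
    \rH^0(-K) = \big(V_\lambda^{\otimes n}\big)^{\SL_r} = V_{\vec\lambda}^{\SL_r}, \qquad \vec\lambda = (\lambda,\ldots,\lambda).
\]

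Finally, I invoke the stabilization $\VV_{\ell, \vec\lambda}^\dagger \cong V_{\vec\lambda}^{\SL_r}$ valid for $\ell \ge \bigl(\sum_i \sum_j \lambda_j^i\bigr)/(r+1)$ recalled in Section~\ref{ssec:conformalblock}. Since $\sum_j \lambda_j = r(r-1)$, the bound reads $\ell \ge nr(r-1)/(r+1)$, which is satisfied by $\ell = (r-1)n$ (as $r+1 > r$). Therefore $\rH^0(-K) = \VV_{(r-1)n, \vec\lambda}^\dagger$, as claimed. I do not expect a real obstacle: once Lemma~\ref{lem:KGIT} is in place the proof is bookkeeping, and the only steps demanding attention are the descent of the line bundle (via the divisibility of total weights by $r$) and the numerical check that the level $(r-1)n$ lies in the stable range.
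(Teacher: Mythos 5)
Your computation of the vector space $\rH^{0}(-K)$ is correct, but your route is genuinely different from the paper's, and the difference matters at exactly one point: pinning down the level. The paper does not pass through $V_{\vec\lambda}^{\SL_r}$ and the stabilization bound at all. Instead it factors $-K=\otimes_{i=1}^{n}\pi_i^{*}\overline{\cO}(2,\ldots,2)$ into $(r-1)n$ line bundles of the form $\pi_i^{*}\overline{\cO}(e_a)\otimes\pi_j^{*}\overline{\cO}(e_{r-a})$, identifies each factor with the \emph{level-one} conformal block $\VV^{\dagger}_{1,(0,\ldots,\omega_{r-a},\ldots,\omega_{a},\ldots,0)}$ via \cite[Proposition 1.3]{BGM15}, and then takes the product inside the algebra $\VV^{\dagger}$; since levels add under this product, the total level is forced to be $(r-1)n$.

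The gap in your argument is the final step. The stabilization isomorphism $\VV^{\dagger}_{\ell,\vec\lambda}\cong V_{\vec\lambda}^{\SL_r}$ holds for \emph{every} $\ell\ge nr(r-1)/(r+1)$, so your reasoning would equally well conclude $\rH^{0}(-K)=\VV^{\dagger}_{\ell,\vec\lambda}$ for $\ell=(r-1)n-1$ or $(r-1)n+1$, both of which also lie in the stable range for $n\ge 3$. What singles out $(r-1)n$ is not the vector space but the class of $\pi^{*}(-K)$ in $\Pic(\rM_{\bp}(r,0))\cong\ZZ\cL\times\prod_{i}\Pic(\Fl(V))$, i.e.\ the coefficient of the determinant bundle $\cL$; this coefficient is invisible on the GIT quotient because $\cL$ has a nowhere-vanishing section there (the theta divisor lies entirely in the unstable locus), so membership in the stable range cannot detect it. And the specific level is precisely the content that gets used downstream: in Proposition \ref{prop:canonicaldivsior} the blow-up formula subtracts $((r-1)n-2r)\Theta$ and the level must land at exactly $2r$ for Pauly's theorem to produce the weight $\ba_c$. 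So you need some mechanism that computes the $\cL$-coefficient rather than merely checking the stabilization inequality; the paper's factorization into level-one theta bundles is exactly such a mechanism. The rest of your argument (choice of $\bd$ near the symmetric linearization, descent via the $\mu_r$-stabilizer and divisibility of $\sum_{i,j}jb_j^{i}=nr(r-1)$ by $r$, Borel--Weil and K\"unneth giving $\bigl(V_\lambda^{\otimes n}\bigr)^{\SL_r}$) is fine.
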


\begin{proof}
By Lemma \ref{lem:KGIT}, $-K = \otimes_{i=1}^{n}\pi_{i}^{*}\overline{\cO}(2, 2, \cdots, 2)$. This is a product of $(r-1)n$ line bundles of the form $\pi_{i}^{*}\overline{\cO}(e_{a}) \otimes \pi_{j}^{*}\overline{\cO}(e_{r-a})$ where $e_{k}$ is the standard $k$-th vector. Each $\rH^{0}(\pi_{i}^{*}\overline{\cO}(e_{a}) \otimes \pi_{j}^{*}\overline{\cO}(e_{r-a}))$ is identified with $\VV_{1,(0, \cdots, 0, \omega_{r-a}, 0, \cdots, 0, \omega_{a}, 0, \cdots)}^{\dagger}$ where $\omega_{r-a}$ is on the $i$-factor and $\omega_{a}$ is on the $j$-th factor (\cite[Proposition 1.3]{BGM15}). Now by taking the tensor product of them, we obtain the statement.
\end{proof}

\begin{proposition}\label{prop:canonicaldivsior}
Let $\ba$ be a dominant parabolic weight. Let $\rM = \rM_{\bp}(r, 0, \ba)$. Then $\rH^{0}(-K_{\rM}) = \VV_{2r,(\lambda, \lambda, \cdots, \lambda)}^{\dagger}$ where $\lambda = (2\sum_{j=1}^{r-1}\omega_{j})$. 
\end{proposition}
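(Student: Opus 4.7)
The plan is to track the anticanonical divisor through the first scaling wall-crossing from a small-weight GIT model and then translate the resulting identity into the stack Picard group via the Cox ring identification; I argue the case $n > 2r$, which is the setting needed in Theorem \ref{thm:fingen}. Choose a general effective small weight $\bfb$ so that, by Theorem \ref{thm:smallweightGIT} and Proposition \ref{prop:maxPicnumGIT}, $\rM^- := \rM_{\bp}(r,0,\bfb) \cong \Fl(V)^n \git_{L_{\bfb}} \SL_r$ has a maximal stable locus. Setting $\lambda := 2\sum_{j=1}^{r-1}\omega_j$, Lemma \ref{lem:KGIT} gives $-K_{\rM^-} = \bigotimes_{i=1}^n \pi_i^* \overline{F}_\lambda$, and Corollary \ref{cor:Kconformal} identifies its global sections with $\VV^\dagger_{(r-1)n,(\lambda,\ldots,\lambda)}$.

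Now scale $\bfb$ past the first wall, which by Lemma \ref{lem:firstwallcrossing} is either $\Delta(1,-1,n[1])$ or $\Delta(r-1,-1,n[r-1])$, and let $\ba$ be the resulting dominant weight with $\rM := \rM_{\bp}(r,0,\ba)$. Lemma \ref{lem:firstwallcrossing} together with Proposition \ref{prop:wallcrossing} shows that the wall-crossing is a simple blow-up $\pi : \rM \to \rM^-$ whose exceptional divisor is the generalized theta divisor $\Theta$ and whose center $Y^-$ is isomorphic to $\rM_{\bp}(r-1,\pm1,\bc)$ for a suitable induced weight. Using the dimension formula recorded after Definition 2.4,
\[
c_0 := \codim_{\rM^-} Y^- = n(r-1) - (2r-1),
\]
and the standard blow-up formula for the canonical divisor yields
\[
-K_\rM \;=\; \pi^*(-K_{\rM^-}) \;-\; (c_0-1)\,\Theta.
\]

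To pass to the stack Picard group $\Pic(\rM_{\bp}(r,0)) = \ZZ\cL \oplus \bigoplus_i \Pic(\Fl(V))$, note that under the Cox ring identification of Proposition \ref{prop:coxringdominantweight} the divisor $\Theta$ corresponds to $\cL$. The proof of Corollary \ref{cor:Kconformal} factors $-K_{\rM^-}$ as a tensor product of $(r-1)n$ line bundles of the form $\pi_i^*\overline{F}_{\omega_{r-a}} \otimes \pi_j^*\overline{F}_{\omega_a}$, each whose space of sections is the one-dimensional level-one conformal block $\VV^\dagger_{1,\vec{\mu}}$ placing $\omega_{r-a}$ at the $i$-th point and $\omega_a$ at the $j$-th point. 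Each such level-one factor corresponds to the stack class $\cL\otimes F_{\vec{\mu}}$, so taking the tensor product identifies $\pi^*(-K_{\rM^-})$ with $\cL^{(r-1)n}\otimes F_{(\lambda,\ldots,\lambda)}$ in $\Pic(\rM_{\bp}(r,0))$. Substituting into the blow-up identity and using $(r-1)n - (c_0-1) = 2r$ gives
\[
-K_\rM \;=\; \cL^{2r}\otimes F_{(\lambda,\ldots,\lambda)} \;\in\; \Pic(\rM_{\bp}(r,0)),
\]
so $\rH^0(-K_\rM) = \VV^\dagger_{2r,(\lambda,\ldots,\lambda)}$.

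The principal difficulty is the final translation step. Since $\cL$ restricts trivially to $\rM^-$, every stack class $\cL^\ell \otimes F_{(\lambda,\ldots,\lambda)}$ restricts to the same line bundle $-K_{\rM^-}$, so the correct exponent $(r-1)n$ cannot be read off from the GIT side alone. It is precisely the level-one factorization of Corollary \ref{cor:Kconformal} that naturally exhibits $-K_{\rM^-}$ as a product of $(r-1)n$ level-one conformal blocks on the moduli stack, thereby pinning down the $\cL$-grading before the $(c_0-1)$ shift contributed by the blow-up. For an arbitrary dominant weight the same formula follows, since the class of $-K$ in the stack Picard group is determined by universal data and is therefore independent of the particular dominant chamber.
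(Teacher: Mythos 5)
Your proposal is correct and follows essentially the same route as the paper: identify $-K$ on the small-weight GIT model via Lemma \ref{lem:KGIT} and Corollary \ref{cor:Kconformal}, realize the dominant-weight moduli space as the blow-up at the first scaling wall (Lemma \ref{lem:firstwallcrossing}) with exceptional divisor $\Theta$, and apply the blow-up formula with codimension $(r-1)n-2r+1$ to drop the level from $(r-1)n$ to $2r$. Your additional remark on pinning down the $\cL$-grading through the level-one factorization makes explicit a point the paper leaves implicit, but it is the same argument.
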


\begin{proof}
We may assume that $\ba$ is the weight data right after the first wall-crossing while the scaling wall-crossing from $\ba(\epsilon)$. By Lemma \ref{lem:firstwallcrossing}, the first wall-crossing is the blow-up along $\rM_{\bp}(r-1, 1, \bc)$ or $\rM_{\bp}(r-1, -1, \bfb)$. In particular, the codimension of the blow-up center is $(r-1)n - 2r + 1$. By the blow-up formula of canonical divisors, if $K$ denotes the canonical divisor of $\Fl(V)^{n}\git_{L_{\ba}}\SL_{r}$ and if $\pi : \rM \to \Fl(V)^{n}\git_{L_{\ba}}\SL_{r}$ is the blow-up morphism, $-K_{\rM} = \pi^{*}(-K) - ((r-1)n - 2r)Y^{+}$. Since $Y^{+}$ is the theta divisor $\Theta$, $\rH^{0}(-K_{\rM}) = \VV_{(r-1)n - ((r-1)n - 2r),(\lambda, \lambda, \cdots, \lambda)}^{\dagger} = \VV_{2r,(\lambda, \lambda, \cdots, \lambda)}^{\dagger}$.
\end{proof}

A key theorem is the following classical result of Pauly.

\begin{theorem}[\protect{\cite[Theorem 3.3, Corollary 6.7]{Pau96}}]\label{thm:Pauly}
Let $\ba = (a_{\bullet}^{i})$ be a parabolic weight. Then there is an ample line bundle $\Theta_{\ba}$ on $\rM_{\bp}(r, 0, \ba)$ such that $\rH^{0}(\Theta_{\ba})$ is canonically identified with $\VV_{\ell,(\lambda^{1}, \lambda^{2}, \cdots, \lambda^{n})}^{\dagger}$ where $\ell$ is the smallest positive integer such that $\ell a_{j}^{i} \in \ZZ$ and $\lambda_{j}^{i} = \ell a_{j}^{i}$.
\end{theorem}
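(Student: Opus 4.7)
The plan is to build $\Theta_{\ba}$ explicitly from the Picard group description of the moduli stack recalled in Section \ref{ssec:conformalblock}, and then to transport the identification of global sections with conformal blocks through the uniformization theorem for parabolic bundles on $\PP^{1}$. Concretely, set $\lambda^{i} := \sum_{j=1}^{r-1}(\ell a_{j}^{i}-\ell a_{j+1}^{i})\omega_{j}$, so that $F_{\lambda^{i}} = \cO(\ell d_{\bullet}^{i})$ in the notation of Section \ref{sec:smallweight}, and define $\Theta_{\ba}$ on the moduli stack $\rM_{\bp}(r,0)$ as $\cL^{\ell}\otimes \bigotimes_{i=1}^{n} F_{\lambda^{i}}$. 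It descends to $\rM_{\bp}(r,0,\ba)$ because on each closed strictly semistable orbit the stabilizer acts trivially: the total weight of the center $\mu_{r}\subset \SL_{r}$ on $\cL^{\ell}\otimes \bigotimes F_{\lambda^{i}}$ is $-\ell\cdot |\ba|/r + \ell\cdot|\ba|/r \equiv 0\pmod r$ by the choice $\ell a_{j}^{i}\in\ZZ$, so Kempf's descent lemma (as in the proof of Proposition \ref{prop:maxPicnumGIT}) gives descent.

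Next I would check ampleness of $\Theta_{\ba}$. The first step is to verify positivity against the basic test curves: the Schubert curves $C_{j}^{i}$ appearing in the proof of Lemma \ref{lem:KGIT}, and the rational curves inside each wall-crossing center described in Section \ref{sec:wallcrossing}. For the former, the intersection number is $\ell(a_{j}^{i}-a_{j+1}^{i}) > 0$, while for the latter one uses the short exact sequence $0\to \cE^{+}\to\cE\to\cE^{-}\to 0$ and the equality $\mu(\cE^{+}) = \mu(\cE)$ at $\ba$ to compare slopes. Combined with the fact that $\Theta_{\ba}$ is the polarization defining the GIT stability on $\Fl(V)^{n}$ when $\ba$ is small (Theorem \ref{thm:smallweightGIT}), these checks promote nefness to ampleness through the characterization of ample $\QQ$-divisors on normal projective varieties (Nakai--Moishezon) after noting that the class is $\QQ$-Cartier.

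For the identification $\rH^{0}(\rM_{\bp}(r,0,\ba),\Theta_{\ba})\cong \VV_{\ell,\vec{\lambda}}^{\dagger}$, I would invoke the parabolic analogue of the Beauville--Laszlo uniformization: $\rM_{\bp}(r,0,\ba)$ is the GIT quotient of the product of an affine Grassmannian $\Gr_{\SL_{r}}$ at a point away from $\bp$ by the group $L_{\bp}^{-,P}(\SL_{r})$ of $\SL_{r}$-valued functions with prescribed parahoric reductions at $\bp$. Under this presentation, sections of $\Theta_{\ba}$ pull back to sections of the basic level-$\ell$ line bundle on $\Gr_{\SL_{r}}$ twisted by $F_{\lambda^{i}}$ at each parabolic point, equivariant under $L_{\bp}^{-,P}(\SL_{r})$. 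By the Borel--Weil realization of the integrable highest weight representation $\mathcal{H}_{\ell,\lambda^{i}}$ of $\widehat{\sl_{r}}$ as sections on the affine flag, these invariants are exactly the covacua in $\mathcal{H}_{\ell,\vec\lambda}/\sl_{r}(\PP^{1}\setminus\bp)\cdot \mathcal{H}_{\ell,\vec\lambda} = \VV_{\ell,\vec\lambda}^{\dagger}$.

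The main obstacle in such a proof is the last step: making the identification of equivariant sections with conformal blocks fully canonical. It requires the Kumar--Mathieu--Narasimhan vanishing for ind-schemes to pass from infinite- to finite-dimensional cohomology, together with care in matching the parahoric level structure used in the uniformization with the parabolic flag data defining $\rM_{\bp}(r,0,\ba)$. This is precisely what is worked out in \cite{LS97} for $\mathrm{SL}_{r}$-bundles and extended in \cite{Pau96} to the parabolic setting; the ampleness and descent arguments above would be elementary, but this final matching uses the full machinery of affine Lie algebras and is the reason we simply cite Pauly's theorem rather than reprove it.
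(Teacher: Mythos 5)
The paper does not prove this statement: it is quoted verbatim as a theorem of Pauly, with the proof living in \cite{Pau96} (building on \cite{LS97}), so there is no internal argument to compare yours against. Your sketch is a fair reconstruction of the strategy of those references --- construct $\Theta_{\ba}$ as $\cL^{\ell}\otimes\bigotimes_{i}F_{\lambda^{i}}$, descend it to $\rM_{\bp}(r,0,\ba)$, and identify its sections with $\VV^{\dagger}_{\ell,\vec{\lambda}}$ via uniformization and affine Borel--Weil theory --- and you correctly locate the genuinely hard step in the last part, which is exactly the content you would be citing anyway.

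That said, if the first two paragraphs were meant to stand as actual proofs, two steps do not go through as written. First, your descent check only verifies that the central $\mu_{r}\subset\SL_{r}$ acts trivially on the fiber; for a non-general weight $\ba$ (which the statement allows) the closed strictly semistable orbits have positive-dimensional stabilizers, and Kempf's criterion must be verified on those as well, not just on the center. Second, positivity of $\Theta_{\ba}$ against the Schubert curves and the curves inside wall-crossing centers is nowhere near the hypothesis of Nakai--Moishezon, which requires $(\Theta_{\ba}^{\dim Z}\cdot Z)>0$ for every positive-dimensional subvariety $Z$; nefness together with positivity on finitely many curve classes does not imply ampleness. In Pauly's construction both points are handled for free, since the moduli space is produced as a GIT quotient of a parameter scheme on which $\Theta_{\ba}$ is the linearization, so the descended bundle is ample by construction rather than by an a posteriori positivity check. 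The honest summary is that your proposal is a plausible road map whose only rigorous content is the citation you end with --- which is also all the paper itself provides.
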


Let $\ba_{c}$ be a parabolic weight such that $({a_{c}}_{\bullet}^{i}) = \frac{1}{r}(r-1, r-2, \cdots, 1)$. By Theorem \ref{thm:Pauly}, $-K_{\rM} \in\VV_{2r,(\lambda, \lambda, \cdots, \lambda)}^{\dagger}$ is an ample divisor on $\rM_{\bp}(r, 0, \ba_{c})$. Thus $\rM_{\bp}(r, 0, \ba_{c})$ is a Fano variety. However, in many cases (even for rank two), $\ba_{c}$ lies on a wall, so it is not general and not dominant. To avoid this technical difficulty, we perturb the weight data slightly. Let $\ba_{d}$ be a general small perturbation of $\ba_{c}$ such that the set of walls that we meet while the scaling wall crossing from $\ba_{c}(\epsilon)$ to $\ba_{c}$ is equal to that for the scaling wall crossing from $\ba_{d}(\epsilon)$ to $\ba_{d}$. Since $\rM_{\bp}(r, 0, \ba_{d}(\epsilon)) \cong \Fl(V)^{n}\git_{L_{\ba_{d}(\epsilon)}}\SL_{r}$ and $L_{\ba_{d}(\epsilon)}$ is sufficiently close to the symmetric linearization, by Proposition \ref{prop:maxPicnumGIT}, $\rho(\rM_{\bp}(r, 0, \ba_{d}(\epsilon)) = (r-1)n$ if $n > 2r$. 

We show that if $n$ is sufficiently large, then $\ba_{d}$ is dominant.

\begin{proposition}\label{prop:adisdominant}
Let $\Delta(s, d, \cJ)$ be a wall one meets while the scaling wall-crossing from $\ba_{d}(\epsilon)$ to $\ba_{d}$. Suppose that $n$ is sufficiently large. Then the wall-crossing is not a blow-down. In particular, $\ba_{d}$ is dominant.
\end{proposition}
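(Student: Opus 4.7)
The plan is to translate ``not a blow-down'' into a dimension inequality and then verify that inequality by combining an analog of Proposition~\ref{prop:extdim} with the scaling wall equation. By Proposition~\ref{prop:wallcrossing} and the description of $Y^{+}\to Y$ as a projective bundle with fiber $\PP\Ext^{1}(\cE^{+},\cE^{-})$ computed with weight $\ba^{+}$, the wall-crossing is a blow-down precisely when $\phi^{+}$ is an isomorphism, equivalently when $\dim\Ext^{1}(\cE^{+},\cE^{-})=1$. So it suffices to show $\dim\Ext^{1}(\cE^{+},\cE^{-})\ge 2$ for every $\cE^{+}\oplus \cE^{-}\in Y$.

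First I would observe that the proof of Proposition~\ref{prop:extdim} is symmetric under swapping $(\cE^{+},\bfb)\leftrightarrow(\cE^{-},\bc)$ and $J^{i}\leftrightarrow [r]\setminus J^{i}$; Lemma~\ref{lem:dimN} then contributes $\dim\omega_{[r]\setminus J^{i}}$ in place of $\dim\omega_{J^{i}}$. Applying the dualized formula for weight $\ba^{+}$ yields
\[
\dim\Ext^{1}(\cE^{+},\cE^{-}) \;=\; \dim\Hom\bigl(E^{-}\otimes \cO(-(n-2)),\,E^{+}\bigr) - n(r-s)s + \sum_{i=1}^{n}\dim\omega_{[r]\setminus J^{i}}.
\]
The Hom term is $h^{0}(E^{+}\otimes (E^{-})^{*}\otimes \cO(n-2))$, and since $h^{0}\ge \chi = rd + s(r-s)(n-1)$ unconditionally on $\PP^{1}$, I obtain
\[
\dim\Ext^{1}(\cE^{+},\cE^{-}) \;\ge\; rd - s(r-s) + \sum_{i=1}^{n}\dim\omega_{[r]\setminus J^{i}}.
\]

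Next I would use the scaling wall equation. Because $\ba_{d}$ is an arbitrarily small perturbation of $\ba_{c}$ with $(a_{c})_{j}^{i}=(r-j)/r$, a direct computation using the identity $\sum_{j\in J^{i}}j = \dim\omega_{J^{i}} + s(s+1)/2$ shows that $\Delta(s,d,\cJ)$ meets the scaling ray $c\ba_{d}$ at some $c\in(0,1]$ satisfying $rd = c\bigl(\sum_{i}\dim\omega_{J^{i}} - \tfrac{1}{2}ns(r-s)\bigr)$, up to error absorbed by strict inequalities. Combining this with the complementary relation $\dim\omega_{J^{i}}+\dim\omega_{[r]\setminus J^{i}}=s(r-s)$, the previous bound simplifies to
\[
\dim\Ext^{1}(\cE^{+},\cE^{-}) \;\ge\; |rd|\cdot\tfrac{1-c}{c} + \tfrac{1}{2}s(r-s)(n-2),
\]
both terms on the right being non-negative for $c\in(0,1]$. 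Thus once $n$ is large enough that $s(r-s)(n-2)/2>1$, I get $\dim\Ext^{1}(\cE^{+},\cE^{-})\ge 2$, so no wall along the scaling path is a blow-down.

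For the ``in particular'' assertion: Proposition~\ref{prop:maxPicnumGIT} gives $\rho(\rM_{\bp}(r,0,\ba_{d}(\epsilon)))=(r-1)n$, and by Lemma~\ref{lem:firstwallcrossing} the first scaling wall-crossing is a blow-up, raising $\rho$ to the maximum $(r-1)n+1$ (the Picard number of the stack $\rM_{\bp}(r,0)$). Every subsequent wall-crossing must then preserve $\rho$, and since none is a blow-down by the above, each must be a flip. Hence $\rho(\rM_{\bp}(r,0,\ba_{d}))=(r-1)n+1$, so $\ba_{d}$ is dominant. The main obstacle is verifying the dualized version of Proposition~\ref{prop:extdim}, which requires rerunning its proof with the roles of sub and quotient exchanged; after that step, no control on the splitting types of $E^{\pm}$ is needed, since only the unconditional inequality $h^{0}\ge \chi$ is used.
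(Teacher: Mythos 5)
Your proof is correct, and it reorganizes the argument in a way that genuinely differs from the paper's. The paper works on the $\ba^{-}$ side: it expresses the blow-down condition as ``$Y^{-}$ is a divisor,'' i.e.\ $\sum_{i}\dim\omega_{J^{i}}=(n-1)s(r-s)+dr-1$, and to compute $\dim\Ext^{1}(\cE^{-},\cE^{+})$ exactly via Proposition \ref{prop:extdim} it must split into cases according to whether a general point of $Y^{-}$ has trivial underlying bundle. The non-trivial case forces $Y^{-}\subset\Theta$ and is disposed of by Lemma \ref{lem:thetaisnotcontracted} (a forward reference to Section \ref{ssec:birationalmodels}); the trivial case is where the splitting types of $E^{\pm}$ are controlled enough to give $h^{0}=\chi$. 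You instead work on the $\ba^{+}$ side with $\dim\Ext^{1}(\cE^{+},\cE^{-})$, and since you only need a \emph{lower} bound to rule out its being $1$, the unconditional inequality $h^{0}\ge\chi$ on $\PP^{1}$ suffices; this eliminates both the case division and the appeal to Lemma \ref{lem:thetaisnotcontracted}. The price is the dualized version of Proposition \ref{prop:extdim} (with $\Ext^{0}(\cE^{+},\cE^{-})=0$ now coming from $\mu(\cE^{+})>\mu(\cE^{-})$ for $\ba^{+}$, and $\dim\omega_{[r]\setminus J^{i}}$ replacing $\dim\omega_{J^{i}}$), which is routine and which you verify correctly, including the identity $\dim\omega_{J^{i}}+\dim\omega_{[r]\setminus J^{i}}=s(r-s)$. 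The terminal computation --- substituting the scaling wall equation $rd=c\bigl(\sum_{i}\dim\omega_{J^{i}}-\tfrac{1}{2}ns(r-s)\bigr)$ for the central weight and observing that both resulting terms are nonnegative with the second growing in $n$ --- is the same calculation as the paper's contradiction, just packaged as a bound rather than as an impossible equation; as in the paper, one should note that the perturbation from $\ba_{c}$ to $\ba_{d}$ is harmless because the walls form a finite set and the final inequality has slack bounded away from zero. Your treatment of the ``in particular'' clause via Proposition \ref{prop:maxPicnumGIT} and Lemma \ref{lem:firstwallcrossing} matches the paper's.
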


\begin{proof}
Let $\ba = \ba_{d}(c)$ be the weight on the wall, and $\ba^{\pm}$ are weights near the wall as before. Recall that the wall-crossing center $Y^{-}$ is isomorphic to $\PP\Ext^{1}(\cE^{-}, \cE^{+})$-bundle over $\rM_{\bp}(s, d, \bfb) \times \rM_{\bp}(r-s, -d, \bc)$. 

Suppose that a general point on $Y^{-}$ parametrizes a parabolic bundle with a non-trivial underlying bundle. Then $Y^{-} \subset \Theta$. If the wall-crossing is blow-down, then $Y^{-} = \Theta$ because $\Theta$ is an irreducible divisor. But $\Theta$ is not contracted by scaling-up by Lemma \ref{lem:thetaisnotcontracted}. Thus we may assume that a general point of $Y^{-}$ parametrizes a parabolic bundle with a trivial underlying bundle.

Let $\cE = (E, \{W_{\bullet}^{i}\}, \ba^{-})$ be a general point on $Y^{-}$ and $\cE^{+} = (E^{+}, \{{W|_{E}}_{\bullet}^{i}\}, \bfb^{-})$ (resp. $\cE^{-} = (E^{-}, \{{W/E}_{\bullet}^{i}\}, \bc^{-})$) be the destabilizing subbundle (resp. quotient bundle). Since $E$ is trivial, $E^{+}$ (resp. $E^{-}$) is a direct sum of line bundles with nonpositive (resp. nonnegative) degrees. Thus $\dim \Hom(E^{+}\otimes \cO(-(n-2)), E^{-}) = -dr +(n-1)s(r-s)$. By Proposition \ref{prop:extdim}, with respect to $\ba^{-}$,
\[
	\dim \Ext^{1}(\cE^{-}, \cE^{+})
	= -dr - s(r-s) + \sum_{i=1}^{n}\dim \omega_{J^{i}}.
\]

If the wall-crossing is a blow-down,
\[
	\dim \Ext^{1}(\cE^{-}, \cE^{+}) + \dim \rM_{\bp}(s, d, \bfb)
	+ \dim \rM_{\bp}(r-s, -d, \bc) =
	\dim \rM_{\bp}(r, 0, \ba).
\]
Since $\dim \rM_{\bp}(r, d, \ba) = nr(r-1)/2 - r^{2}+1$, this is equivalent to
\[
	\sum_{i=1}^{n} \dim \omega_{J^{i}} = (n-1)s(r-s) + dr - 1.
\]

Now we show that such a wall does not appear when $n \gg 0$. If $\Delta(s, d, \cJ)$ is a wall that we cross while scaling, then there is a constant $0 < c \le 1$ such that
\[
	\mu(\cE^{+}) = \mu(\cE)
\]
for the weight $\ba = \ba_{c}(c)$ on $\Delta(s, d, \cJ)$.

Note that the weight data $\ba_{c}(c)$ is defined as $a_{c}(c)_{\bullet}^{i} = \frac{c}{r}(r-1, r-2, \cdots, 1)$. Thus
\[
\begin{split}
	\mu(\cE^{+})
	&= \frac{1}{s}\left(d + \sum_{i=1}^{n}\sum_{j \in J^{i}}\frac{c}{r}(r-j)
	\right)
	= \frac{1}{s}\left(d + \sum_{i=1}^{n}\sum_{k = 1}^{s}
	\left(c -\frac{c}{r}k - \frac{c}{r}(J_{k}^{i} - k)\right)\right)\\
	&= \frac{1}{s}\left(d + cn\left(s - \frac{s(s+1)}{2r}\right)
	- \frac{c}{r}\sum_{i=1}^{n}\dim \omega_{J^{i}}\right)\\
	&= \frac{1}{s}\left(d + cn\left(s - \frac{s(s+1)}{2r}\right)
	- \frac{c}{r}\left((n-1)s(r-s) + dr -1)\right)\right)\\
	& = \frac{1}{s}\left(cn\left(s - \frac{s(s+1)}{2r} - \frac{s(r-s)}{r}
	\right) + \frac{c}{r}(1+s(r-s)) + (1-c)d\right).
\end{split}
\]
On the other hand,
\[
	\mu(\cE) = \frac{1}{r}\left(\sum_{i=1}^{n}\sum_{j=1}^{r-1}
	\frac{c}{r}(r-j)\right)
	= \frac{cn(r-1)}{2r}.
\]
From $\mu(\cE^{+}) = \mu(\cE)$, we have
\[
	\frac{csn(r-1)}{2r} = cn\left(s - \frac{s(s+1)}{2r}
	- \frac{s(r-s)}{r}\right)
	+ \frac{c}{r}(1+s(r-s))+(1-c)d,
\]
which is equivalent to
\[
	c\left(ns\frac{s-r}{2r} + \frac{1+s(r-s)}{r}\right) = -(1-c)d.
\]
If $n \gg 0$, then the left hand side is a negative number, but the right hand side is non-negative because $d < 0$. Thus there is no such $0 < c \le 1$.
\end{proof}

We are ready to prove the main theorem.

\begin{proof}[Proof of Theorem \ref{thm:Fanotype}]
First of all, suppose that $n$ is sufficiently large. By Proposition \ref{prop:adisdominant}, $\rM := \rM_{\bp}(r, 0, \ba_{d})$ has Picard number $(r-1)n + 1$. Then, $-K_{\rM}$ is nef because it is a limit of ample divisors. If the anticanonical divisor is not big, then the wall-crossing center is the whole $\rM$, and $\dim \Ext^{1}(\cE^{-}, \cE^{+}) = \dim \rM - \dim \rM_{\bp}(s, d, \bfb) - \dim \rM_{\bp}(r-s, -d, \bc) + 1$, or equivalently, $\sum_{i=1}^{n} \dim \omega_{J^{i}} = (n-1)s(r-s) + dr$. By a similar computation as in the proof of Proposition \ref{prop:adisdominant}, one can check that such a boundary wall-crossing does not occur as long as $n$ is large. Thus the anticanonical divisor is also big and $\rM$ is a smooth (Proposition \ref{prop:smooth}) weak Fano variety. Thus $\rM$ is of Fano type.

For a general non-necessarily dominant weight $\ba$, because $\ba_{d}$ is dominant, $\rM_{\bp}(r, 0,\ba)$ is obtained from $\rM_{\bp}(r, 0, \ba_{d})$ by taking several flips and blow-downs, but no blow-ups. By \cite[Theorem 1.1, Corollary 1.3]{GOST15}, $\rM_{\bp}(r, 0, \ba)$ is also of Fano type. When $n$ is small, by Lemma \ref{lem:extension}, $\rM_{\bp}(r, 0, \ba)$ is an image of $\rM_{\bq}(r, 0, \ba')$ for some large $\bq$. Thus it is of Fano type by \cite[Corollary 1.3]{GOST15}.
\end{proof}


\section{Mori's program of the moduli space}\label{sec:MMP}

We are ready to run Mori's program of $\rM_{\bp}(r, 0, \ba)$. In this section, $n > 2r$ and $\ba$ is a dominant weight.

\subsection{Birational models}\label{ssec:birationalmodels}
Recall that for an integral divisor $D$ on a projective variety $X$,
\[
	X(D) := \proj \bigoplus_{m \ge 0}\rH^{0}(X, \cO(mD))
\]
be the associated projective model. The following observation is an immediate consequence of Pauly's theorem (Theorem \ref{thm:Pauly}).

\begin{proposition}\label{prop:birationalmodels}
Let $D \in \mathrm{int}\Eff(\rM_{\bp}(r, 0, \ba))$. Then $\rM_{\bp}(r, 0, \ba)(D) \cong \rM_{\bp}(r, 0, \bfb)$ for some parabolic weight $\bfb$.
\end{proposition}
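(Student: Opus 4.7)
The plan is to produce $\bfb$ directly from the divisor class $D$ and then match section rings via Pauly's theorem. Since $\ba$ is dominant, Proposition \ref{prop:coxringdominantweight} lets us identify $\Cox(\rM_{\bp}(r, 0, \ba)) = \VV^{\dagger}$, so any integral divisor $D$ corresponds uniquely (up to rescaling) to a pair $(\ell, \vec{\lambda})$ with $\ell \in \ZZ_{\ge 0}$ and partitions $\lambda^{i}$ satisfying $\lambda_{1}^{i} \le \ell$, and $\rH^{0}(\rM_{\bp}(r, 0, \ba), mD) = \VV^{\dagger}_{m\ell, m\vec{\lambda}}$ for every $m \ge 0$.

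First I would argue that because $D$ lies in the interior of $\Eff(\rM_{\bp}(r, 0, \ba))$, after replacing $D$ by a suitable positive multiple (which does not change the projective model) we may assume $\ell > 0$ and $0 \le \lambda_{j}^{i} < \ell$ for all $i, j$. Define the candidate weight $\bfb$ by $b_{j}^{i} := \lambda_{j}^{i}/\ell$. By construction $1 > b_{1}^{i} \ge b_{2}^{i} \ge \cdots \ge b_{r-1}^{i} \ge 0$, which is a bona fide parabolic weight in the \emph{partial flag} sense of \cite{MS80}, where coincidences $b_{j}^{i} = b_{j+1}^{i}$ simply record that the flag at $p^{i}$ omits the step of dimension $j$. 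The fact that $D$ is genuinely in the interior (not on a facet) should ensure that $\bfb$ is effective, so $\rM_{\bp}(r, 0, \bfb)$ is a nonempty projective variety.

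Next I would invoke Pauly's theorem (Theorem \ref{thm:Pauly}), applied in the partial flag generalization, to get an ample line bundle $\Theta_{\bfb}$ on $\rM_{\bp}(r, 0, \bfb)$ with
\[
	\rH^{0}(\rM_{\bp}(r, 0, \bfb), m\Theta_{\bfb}) = \VV^{\dagger}_{m\ell, m\vec{\lambda}}
\]
for every $m \ge 0$ (noting that the prescription $b_{j}^{i} = \lambda_{j}^{i}/\ell$ is exactly the inverse of Pauly's $\lambda_{j}^{i} = \ell b_{j}^{i}$). Combining this with the identification of $\rH^{0}(mD)$ given above yields an isomorphism of graded rings
\[
	\bigoplus_{m \ge 0} \rH^{0}(\rM_{\bp}(r, 0, \ba), mD) \;\cong\; \bigoplus_{m \ge 0} \rH^{0}(\rM_{\bp}(r, 0, \bfb), m\Theta_{\bfb}).
\]
Ampleness of $\Theta_{\bfb}$ makes the right-hand Proj equal to $\rM_{\bp}(r, 0, \bfb)$, so $\rM_{\bp}(r, 0, \ba)(D) \cong \rM_{\bp}(r, 0, \bfb)$, as desired.

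The main obstacle I expect is not any of the Proj-style bookkeeping but rather the clean translation between the Picard-group data $(\ell, \vec{\lambda})$ and a genuine parabolic weight datum when some $\lambda^{i}$ have repeated parts; this forces one into the partial flag moduli, and one must check that the resulting $\bfb$ is effective and that Pauly's theorem applies verbatim in that setting. A secondary care point is the passage from arbitrary $D \in \mathrm{int}\,\Eff$ to one where $\lambda_{1}^{i} < \ell$ strictly; for points of $\mathrm{int}\,\Eff$ this should follow because the hyperplanes $\{\lambda_{1}^{i} = \ell\}$ are supporting planes of the effective cone (they correspond to the vanishing conditions $\VV^{\dagger}_{\ell,\vec{\lambda}} = 0$ noted in Section \ref{ssec:conformalblock}), so genuinely interior divisors satisfy the strict inequality.
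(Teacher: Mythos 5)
Your proof is correct and hinges on the same key input as the paper's, namely Pauly's theorem, but it is organized quite differently. The paper realizes $\rM := \rM_{\bp}(r,0,\ba)$ as a blow-up $\pi : \rM \to \Fl(V)^{n}\git_{L}\SL_{r}$ with exceptional divisor $\Theta$, writes $\cO(D) = \pi^{*}\otimes_{i}\pi_{i}^{*}\overline{F}_{\lambda^{i}} - k\Theta$, and splits into cases on the sign of $k$: for $k \le 0$ it identifies $\rM(D)$ with a GIT quotient via Theorem \ref{thm:smallweightGIT}, and only for $k>0$ does it invoke Theorem \ref{thm:Pauly} exactly as you do, checking separately that $N-k = \lambda_{1}^{i}$ forces $D \in \partial\Eff(\rM)$. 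Your route treats all cases uniformly: read $(\ell,\vec{\lambda})$ off the $\Pic$-grading via Proposition \ref{prop:coxringdominantweight}, set $\bfb = \vec{\lambda}/\ell$, and apply Pauly; this is cleaner, at the mild cost of leaning on Pauly's theorem also in the large-level regime where the paper prefers the explicit GIT description (harmless, since Theorem \ref{thm:Pauly} carries no smallness hypothesis on the weight). Two points to tighten. First, replacing $D$ by a positive multiple rescales $\ell$ and $\vec{\lambda}$ by the same factor and so does nothing toward $0 \le \lambda_{j}^{i} < \ell$; what actually gives you $\ell > \lambda_{1}^{i} > \cdots > \lambda_{r-1}^{i} > 0$ is precisely your closing observation that each inequality $\lambda_{1}^{i}\le\ell$, $\lambda_{j}^{i}\ge\lambda_{j+1}^{i}$, $\lambda_{r-1}^{i}\ge 0$ contains $\Eff(\rM)$ in a closed half-space (by vanishing of the corresponding conformal blocks), so interior classes satisfy all of them strictly. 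In particular the partial-flag contingency you worry about never arises for $D \in \intr\Eff(\rM)$ --- repeated parts land you on the facets treated in Section \ref{ssec:projmodel} --- so the full-flag version of Pauly's theorem suffices. Second, you should note that the identification $\bigoplus_{m}\rH^{0}(\rM, mD) \cong \bigoplus_{m}\rH^{0}(\rM_{\bp}(r,0,\bfb), m\Theta_{\bfb})$ is one of graded \emph{rings} (both sides sit inside $\VV^{\dagger}$ compatibly with multiplication), which is what the comparison of Proj's requires; the paper elides this point as well.
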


In particular, all \emph{birational} models of $\rM_{\bp}(r, 0, \ba)$ obtained from Mori's program are again moduli spaces of parabolic bundles with some weight data.

\begin{proof}
For a notational simplicity, set $\rM = \rM_{\bp}(r, 0, \ba)$. We may assume the $\rM$ is the blow-up of $\Fl(V)^{n}\git_{L}\SL_{r}$ along $\rM_{\bp}(r-1, -1, \bfb)$ or $\rM_{\bp}(r-1, 1, \bc)$. Let $\pi : \rM \to \Fl(V)^{n}\git_{L}\SL_{r}$ be the blow-up morphism, and $Y^{+} = \Theta$ be the exceptional divisor. With respect to such an $L$, by Proposition \ref{prop:maxPicnumGIT}, $\mathrm{Pic}(\Fl(V)^{n}\git_{L}\SL_{r})$ is identified with an index $r$ sublattice of $\mathrm{Pic}(\Fl(V)^{n})$. Thus any line bundle on $\Fl(V)^{n}\git_{L}\SL_{r}$ can be uniquely written as $\otimes_{i=1}^{n}\pi_{i}^{*}\overline{F}_{\lambda^{i}}$ where $\overline{F}_{\lambda^{i}}$ is the descent of $F_{\lambda^{i}}$ on $\Fl(V)$ and $\pi_{i} : \Fl(V)^{n} \to \Fl(V)$ is the $i$-th projection. Similarly, any line bundle $\cO(D)$ on $\rM$ can be uniquely written as $\pi^{*}\otimes_{i=1}^{n}\pi_{i}^{*}\overline{F}_{\lambda^{i}} - k\Theta$ for some $k \in \ZZ$.

When $k = 0$, 
\[
\begin{split}
	\rM(D) &= (\Fl(V)^{n}\git_{L}\SL_{r})
	(\otimes_{i=1}^{n}\pi_{i}^{*}\overline{F}_{\lambda^{i}})
	= \proj \bigoplus_{m \ge 0}\rH^{0}(\Fl(V)^{n}\git_{L}\SL_{r},
	\otimes_{i=1}^{n}\pi_{i}^{*}\overline{F}_{\lambda^{i}}^{m})\\
	&= \proj \bigoplus_{m \ge 0}\rH^{0}(\Fl(V)^{n},
	\otimes_{i=1}^{n}\pi_{i}^{*}F_{\lambda^{i}}^{m})^{\SL_{r}}
	= \Fl(V)^{n}\git_{\otimes_{i=1}^{n}\pi_{i}^{*}F_{\lambda^{i}}}\SL_{r},
\end{split}
\]
which is $\rM_{\bp}(r, 0, \bfb)$ for some $\bfb$ by Theorem \ref{thm:smallweightGIT}.

If $k < 0$, then $\rM(D) = \rM(\pi^{*}\otimes_{i=1}^{n}\pi_{i}^{*}\overline{F}_{\lambda^{i}}-k\Theta) = \rM(\pi^{*}\otimes_{i=1}^{n}\pi_{i}^{*}\overline{F}_{\lambda^{i}})$ because $\Theta$ is an exceptional divisor of the rational contraction $\rM \dashrightarrow \rM(\pi^{*}\otimes_{i=1}^{n}\pi_{i}^{*}\overline{F}_{\lambda^{i}})) = \Fl(V)^{n}\git_{\otimes_{i=1}^{n}\pi_{i}^{*}F_{\lambda^{i}}}\SL_{r}$.

Suppose that $k > 0$.
\[
\begin{split}
	\rH^{0}(\rM, \pi^{*}\otimes_{i=1}^{n}\pi_{i}^{*}
	\overline{F}_{\lambda^{i}}) &=
	\rH^{0}(\Fl(V)^{n}\git_{L}\SL_{r}, \otimes_{i=1}^{n}
	\pi_{i}^{*}\overline{F}_{\lambda^{i}})
	= \rH^{0}(\Fl(V)^{n}, \otimes_{i=1}^{n}
	F_{\lambda^{i}})^{\SL_{r}}\\
	&= \VV_{N,(\lambda^{1}, \lambda^{2}, \cdots, \lambda^{n})}^{\dagger}
\end{split}
\]
for some $N > 0$. Thus $\rH^{0}(\rM, D) = \VV_{N-k,(\lambda^{1}, \lambda^{2}, \cdots, \lambda^{n})}^{\dagger}$. If $N - k > \lambda_{1}^{i}$ for all $i$, then Theorem \ref{thm:Pauly} implies that $\VV_{N-k,(\lambda^{1}, \lambda^{2}, \cdots, \lambda^{n})}^{\dagger}$ is an ample linear system on $\rM_{\bp}(r, 0, \bfb)$ for some $\bfb$.

Suppose that $N - k = \lambda_{1}^{i}$ for some $i$. Then for any $m$,
\[\rH^{0}(mD-\Theta) = \VV_{m(N-k)-1,(m\lambda^{1}, m\lambda^{2}, \cdots, m\lambda^{n})}^{\dagger} = 0\] because $m(N-k)-1 < m\lambda_{1}^{i}$. Thus $D$ is on the boundary of the effective cone.
\end{proof}

We close this section with a lemma which was used in the proof of Proposition \ref{prop:adisdominant}. 

\begin{lemma}\label{lem:thetaisnotcontracted}
During a scaling wall-crossing, $\Theta$ is not contracted. 
\end{lemma}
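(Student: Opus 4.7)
My plan is to exhibit an $\ba^0$-stable parabolic bundle in $\Theta$ that lies outside the wall-crossing center, forcing the wall-crossing birational map to be a local isomorphism on a dense open subset of $\Theta$ and hence preventing $\Theta$ from being contracted. The only nontrivial ingredient will be a codimension estimate for the common contraction target $Y \subset \rM^0 := \rM_{\bp}(r, 0, \ba^0)$, where $\ba^0$ is the weight on the wall.

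First I would carry out the dimension count. Using the formula $\dim \rM_{\bp}(r, 0, \ba) = n\binom{r}{2} - r^2 + 1$, the product description $Y \cong \rM_{\bp}(s, d, \bfb) \times \rM_{\bp}(r-s, -d, \bc)$, and the identity $\binom{r}{2} = \binom{s}{2} + \binom{r-s}{2} + s(r-s)$, the codimension of $Y$ in $\rM^0$ simplifies to $(n-2)s(r-s) - 1$. Under the standing assumption $n > 2r$ one has $n - 2 \ge 2r - 1 \ge 3$ and $s(r-s) \ge 1$, so this codimension is at least two. Since $\Theta_{\rM^0}$ is the zero locus of the unique (up to scalar) section of $\cL$ on $\rM^0$ and is an irreducible divisor, the inclusion $\Theta_{\rM^0} \subset Y$ is then ruled out on codimension grounds, so I can select an $\ba^0$-stable parabolic bundle $\cE \in \Theta_{\rM^0} \setminus Y$, i.e., one whose underlying bundle is nontrivial but which is not strictly semistable on the wall.

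Then I would lift $\cE$ to a point $\widetilde{\cE} \in \rM^- := \rM_{\bp}(r, 0, \ba^-)$ through the isomorphism $\phi^-: \rM^- \setminus Y^- \xrightarrow{\sim} \rM^0 \setminus Y$. Since $\widetilde{\cE}$ has the same nontrivial underlying bundle as $\cE$, it lies in $\Theta_{\rM^-} \setminus Y^-$, which is therefore nonempty; by irreducibility of $\Theta_{\rM^-}$ this intersection is dense. The wall-crossing birational map $\rM^- \dashrightarrow \rM^+$ is an isomorphism on $\rM^- \setminus Y^-$, so it identifies $\Theta_{\rM^-} \cap (\rM^- \setminus Y^-)$ with a dense open subset of $\Theta_{\rM^+}$, proving that $\Theta$ is not contracted. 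The very first scaling wall, where $\rM^-$ equals the GIT quotient $\Fl(V)^n\git_{L_{\ba^-}}\SL_r$ and carries no theta divisor to begin with, is a vacuous boundary case. The main obstacle is the codimension arithmetic in the first step; the rest follows formally from $\phi^-$ being an isomorphism off $Y^-$.
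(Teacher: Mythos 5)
Your codimension computation is correct: with $\dim \rM_{\bp}(r,0,\ba) = n\binom{r}{2} - r^{2} + 1$ one gets $\codim_{\rM^{0}} Y = (n-2)s(r-s) - 1 \ge 2$ for $n > 2r$. The gap is in the step where you declare that ``$\Theta_{\rM^{0}}$ is an irreducible divisor'' in the singular model $\rM^{0} = \rM_{\bp}(r,0,\ba^{0})$ and then rule out $\Theta_{\rM^{0}} \subset Y$ on codimension grounds. The theta locus in $\rM^{0}$ is nothing but the image of $\Theta_{\rM^{-}}$ under the contraction $\phi^{-}$, so asserting that it has codimension one is essentially equivalent to the statement you are trying to prove: if $\Theta_{\rM^{-}} \subseteq Y^{-}$ (hence, by irreducibility and the non-boundary assumption, $\Theta_{\rM^{-}} = Y^{-}$), then $\phi^{-}$ contracts $\Theta_{\rM^{-}}$ onto $Y$ and the ``theta locus'' of $\rM^{0}$ has codimension at least two, so no contradiction arises. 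Nor can you appeal to ``zero locus of a section of a line bundle is pure codimension one'' on $\rM^{0}$: the determinant bundle $\cL$ does not descend to the coarse space for a weight on a wall $\Delta(s,d,\cJ)$ with $d \ne 0$, because the $\CC^{*}$-stabilizer of a polystable point $\cE^{+}\oplus\cE^{-}$ acts on the fiber $\det \rH^{1}(E)\otimes\det \rH^{0}(E)^{-1}$ by $\lambda^{-rd} \ne 1$. Working upstairs on a smooth parameter space does not help either: the image of the (genuine) theta divisor there under the good quotient is a divisor of $\rM^{0}$ if and only if it meets the stable locus, i.e.\ if and only if there exists an $\ba^{0}$-stable parabolic bundle with nontrivial underlying bundle --- which is exactly the point at issue. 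Note that the lemma is invoked in the paper precisely to dispose of the scenario $Y^{-} = \Theta$, so this case cannot be waved away.

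To repair your argument you would need to directly exhibit an $\ba^{0}$-stable parabolic bundle with nontrivial underlying bundle (say $E = \cO(1)\oplus\cO^{r-2}\oplus\cO(-1)$ with general flags) by explicit slope estimates against the wall weight; that is real work which your write-up does not contain. The paper avoids all of this with a short divisor-theoretic argument: writing the scaling wall-crossings as the models $\rM(D - c\Theta)$ with $D = \pi^{*}\otimes_{i}\pi_{i}^{*}\overline{F}_{\lambda^{i}}$, if $\Theta$ were contracted at some $c > 0$ then $\rM(D - c\Theta + d\Theta) = \rM(D - c\Theta)$ for all $d > 0$, so already $\rM(D - \epsilon\Theta)$ would contract $\Theta$; but $D - \epsilon\Theta$ is ample on $\rM$, a contradiction. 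You should either adopt that route or supply the missing existence statement.
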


\begin{proof}
Let $\ba$ be a dominant weight. We may assume that $\rM := \rM_{\bp}(r, 0, \ba)$ is the blow-up of $\Fl(V)^{n}\git_{L}\SL_{r}$. By Proposition \ref{prop:birationalmodels}, the scaling wall-crossing is the computation of $\rM(D - c\Theta)$ where $D = \pi^{*}\otimes_{i=1}^{n}\pi_{i}^{*}\overline{F}_{\lambda^{i}}$, from $c = 0$ to $c \gg 0$. 

Suppose that for some $c > 0$, $\rM(D - c\Theta)$ is a contraction of $\Theta$. Then $\rM(D - c\Theta + d\Theta) = \rM(D - c\Theta)$ for any $d > 0$. In particular, $\rM(D - \epsilon \Theta)$ is a contraction of $\Theta$ for $0 < \epsilon \ll 1$. But $D - \epsilon \Theta$ is an ample divisor on $\rM$ and we have a contradiction.
\end{proof}

\subsection{Effective cone}\label{ssec:effectivecone}

The first step of Mori's program is the computation of the effective cone.

For some weight data $\bfb$,  $\rM_{\bp}(r, 0, \bfb)$ may be empty. By combining this observation with Proposition \ref{prop:birationalmodels}, we can compute an H-representation of $\Eff(\rM_{\bp}(r, 0, \ba))$. This result was obtained by Belkale in \cite{Bel08b} in a greater generality and with a different idea.

Set $\rM := \rM_{\bp}(r, 0, \ba)$. Since $\rM$ is a Mori dream space, $\Eff(\rM)$ is a closed polyhedral cone. For each $D \in \Eff(\rM)$, $\rH^{0}(D)$ is identified with $\VV_{\ell,(\lambda^{1}, \lambda^{2}, \cdots, \lambda^{n})}^{\dagger}$. There are two classes of linear inequalities for the non-vanishing of conformal blocks:
\begin{enumerate}
\item $\lambda_{j}^{i} \ge \lambda_{j+1}^{i}$ (it includes $\lambda_{r-1}^{i} \ge 0$ by our normalization assumption);
\item $\lambda_{1}^{i} \le \ell$.
\end{enumerate}
The first class of inequalities comes from the effectiveness of $\otimes_{i=1}^{n}F_{\lambda^{i}}$ on $\Fl(V)^{n}$. For the second class, see \cite[Section 4]{BGM15} for an explanation. Here we construct extra linear inequalities.

Recall that the (genus zero) \emph{Gromov-Witten invariant} counts the number of rational curves intersecting several subvarieties. Here we employ the definition in \cite{Ber97}, which is slightly different from the standard definition using moduli spaces of stable maps (\cite{FP97}). For a partition $\lambda = (r \ge \lambda_{1} \ge \lambda_{2} \ge \cdots \ge \lambda_{s}\ge 0)$ and a complete flag $W_{\bullet}$ of an $r$-dimension vector space $V$, we obtain a Schubert subvariety $\Omega_{\lambda}(W_{\bullet}) \subset \Gr(s, V) = \Gr(s, r)$. Its numerical class is independent of the choice of $W_{\bullet}$, and is denoted by $\omega_{\lambda} \in \rH^{*}(\Gr(s, r))$. For a collection of general complete flags $W_{\bullet}^{i}$ of $V$ and a nonnegative integer $d$, the Gromov-Witten invariant
\[
	\langle \omega_{\lambda^{1}}, \omega_{\lambda^{2}}, \cdots,
	\omega_{\lambda^{n}}\rangle_{d}
\]
is the number of maps $f : (\PP^{1}, \bp = (p^{i})) \to \Gr(s, r)$ of degree $d$ such that $f(p^{i}) \in \Omega_{\lambda^{i}}(W_{\bullet}^{i})$ if the number is finite, and otherwise it is zero. Since the moduli space of maps from $\PP^{1}$ to $\Gr(s, r)$ is not proper, a rigorous definition requires a compactified space of maps, for instance the quot scheme over $\PP^{1}$, but by Moving Lemma (\cite[Lemma 2.2A]{Ber97}), the number is equal to the number of genuine maps from $\PP^{1}$ to $\Gr(s, r)$.

\begin{proposition}\label{prop:facetGWinv}
For each collection of partitions $\lambda^{1}, \lambda^{2}, \cdots, \lambda^{n}$ of length $s$ and a nonpositive integer $d$ such that the Gromov-Witten invariant $\langle \omega_{\lambda^{1}}, \omega_{\lambda^{2}}, \cdots, \omega_{\lambda^{n}}\rangle_{-d}$ on $\Gr(s, r)$ is one, there is a linear inequality 
\begin{equation}\label{eqn:supportingplane}
	\frac{1}{s}
	\left(d\ell + \sum_{i=1}^{n}\sum_{j \in J^{i}}\lambda_{j}^{i}\right)
	\le
	\frac{1}{r}\left(\sum_{i=1}^{n}\sum_{j = 1}^{r-1}\lambda_{j}^{i}\right).
\end{equation}
which defines $\Eff(\rM)$. Moreover, these inequalities and two classes (1) and (2) of inequalities provide the H-represenation of $\Eff(\rM)$.
\end{proposition}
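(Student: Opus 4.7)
The plan is to transfer the question from $\Eff(\rM)$ to the characterization of effective normalized weights, and then to identify the facets via boundary wall-crossings. By Proposition~\ref{prop:coxringdominantweight} and Proposition~\ref{prop:birationalmodels}, every integral class in $\Pic(\rM)$ is represented by $\VV_{\ell,\vec{\lambda}}^{\dagger}$, and by Theorem~\ref{thm:Pauly} its nonvanishing is equivalent to $\rM_{\bp}(r,0,\bfb)\neq\emptyset$ for the normalized weight $b_{j}^{i}=\lambda_{j}^{i}/\ell$. The two basic families of inequalities come for free: (1) is Borel--Weil positivity of $\otimes_{i}F_{\lambda^{i}}$ on $\Fl(V)^{n}$, and (2) is the defining condition $b_{1}^{i}\le 1$ of the weight polytope $W_{r,n}$.

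For the necessity of the GW$\,=1$ inequalities, fix a datum $(s,d,\cJ,\vec{\lambda})$ with $d\le 0$ and $\langle\omega_{\lambda^{1}},\ldots,\omega_{\lambda^{n}}\rangle_{-d}=1$ on $\Gr(s,r)$, and let $\bfb$ lie in the interior of $\Eff(\rM)$. Because $\{E\neq\cO^{r}\}$ is the theta divisor of codimension one, a generic $\bfb$-semistable parabolic bundle $\cE$ has trivial underlying bundle $E=\cO^{r}$. Under the standard bijection between length-$s$ partitions inside an $s\times(r-s)$ rectangle and $s$-subsets of $[r]$, the hypothesis GW$\,=1$ produces a unique rank-$s$, degree-$d$ subbundle $F\subset\cO^{r}$ with $F|_{p^{i}}\in\Omega_{\lambda^{i}}(W_{\bullet}^{i})$. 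The induced parabolic subbundle $\cF$ has parabolic slope $\mu(\cF)=\tfrac{1}{s}(d+\sum_{i}\sum_{j\in J^{i}}b_{j}^{i})$, and the $\bfb$-semistability inequality $\mu(\cF)\le\mu(\cE)=|\bfb|/r$ is precisely \eqref{eqn:supportingplane} after clearing $\ell$. Continuity extends this to all of $\Eff(\rM)$.

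For the sufficiency (H-representation) claim, I classify the facets of $\Eff(\rM)$ by wall-crossing. A facet is either one of the basic hyperplanes, or it comes from a stability wall $\Delta(s,d,\cJ)$ which is a boundary wall-crossing in the sense of Proposition~\ref{prop:boundarywall}, i.e., one for which $Y^{-}=\rM_{\bp}(r,0,\ba^{-})$; otherwise both sides of the wall remain nonempty and the wall sits in the interior of $\Eff(\rM)$. The dimension match $\dim Y^{-}=\dim \rM_{\bp}(r,0,\ba^{-})$, combined with Proposition~\ref{prop:extdim} and the computation $\dim\Hom(E^{+}\otimes\cO(-(n-2)),E^{-})=s(r-s)(n-1)-rd$ valid when $E=\cO^{r}$, reduces to the numerical identity
\[
\sum_{i=1}^{n}\dim\omega_{J^{i}}=(n-1)s(r-s)+rd.
\]
Via $\codim\Omega_{\lambda^{i}}=s(r-s)-\dim\omega_{J^{i}}$ and $\dim\mathrm{Quot}^{r-s,-d}(\cO^{r})=s(r-s)+r(-d)$, this is exactly the virtual-dimension-zero condition for $\langle\omega_{\lambda^{1}},\ldots,\omega_{\lambda^{n}}\rangle_{-d}$ on $\Gr(s,r)$. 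The refinement GW$\,=1$ (rather than merely GW$\,>0$) is forced by the structure of $Y^{-}$ as a $\PP\Ext^{1}$-bundle over the irreducible product $\rM_{\bp}(s,d,\bfb)\times\rM_{\bp}(r-s,-d,\bc)$: if the generic $\cE\in Y^{-}$ admitted several destabilizing subbundles of the prescribed numerical type, $Y^{-}$ would be a multiple cover of, rather than equal to, $\rM_{\bp}(r,0,\ba^{-})$.

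The main obstacle is the sufficiency direction, specifically the matching of the parabolic Ext-dimension bookkeeping of Section~\ref{sec:wallcrossing} against Bertram's Quot-scheme interpretation of the Gromov--Witten invariant, so that ``$Y^{-}$ exhausts $\rM_{\bp}(r,0,\ba^{-})$ birationally'' becomes exactly ``GW$\,=1$''. A secondary subtlety is excluding facets from higher-codimension strata of the stability wall arrangement; this is plausible from the MDS property (Corollary~\ref{cor:MDS}) but needs a direct check in this setting.
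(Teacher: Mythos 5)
Your proposal follows essentially the same route as the paper: identify non-basic facets of $\Eff(\rM)$ with boundary wall-crossings via Proposition \ref{prop:boundarywall}, translate the unique destabilizing subbundle of a generic $(\cO^{r},\{W_{\bullet}^{i}\})$ into a degree-$(-d)$ map to $\Gr(s,r)$ meeting the Schubert varieties $\Omega_{\lambda^{i}}(W_{\bullet}^{i})$, and read off \eqref{eqn:supportingplane} from semistability, with Pauly's theorem and the MDS polyhedrality supplying the reduction to weights and the facet bookkeeping. Your explicit matching of the $\dim\Ext^{1}$ count from Proposition \ref{prop:extdim} against the virtual-dimension-zero condition for $\langle\omega_{\lambda^{1}},\ldots,\omega_{\lambda^{n}}\rangle_{-d}$ is a worthwhile elaboration of a step the paper leaves implicit, but it does not change the argument's structure.
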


\begin{proof}
Let $D$ be a general point on a facet of $\Eff(\rM)$, which is not one of facets described above. Take an embedding of a small line segment $\gamma : [-\epsilon, \epsilon] \to \rN^{1}(\rM)$ such that $\gamma(0) = D$ and $\gamma(x) \in \mathrm{int}\; \Eff(\rM)$ when $x < 0$. Let $D^{\pm} := \gamma(\pm\epsilon)$.

Note that each $x \in [-\epsilon, \epsilon]$ defines an $\RR$-divisor $D_{x} = \cL^{\ell} \otimes \otimes_{i=1}^{n}F_{\lambda^{i}}$, and hence a parabolic weight $\ba_{x}$ by setting $(a_{x})_{i} = \frac{1}{\ell}(\lambda_{j}^{i})$. We may assume that all $\ba_{x}$ are general except $\ba_{0}$. Because the moduli space becomes empty after changing the weight from $\ba_{-\epsilon}$ to $\ba_{\epsilon}$, there is a boundary wall $\Delta(s, d, \cJ)$ at $\ba_{0}$. A wall-crossing is a boundary one if and only if a general point $\cE = (\cO^{r}, \{W_{\bullet}^{i}\}, \ba_{-\epsilon})$ of $\rM_{\bp}(r, 0, \ba_{-\epsilon})$ has the unique detabilizing bundle $\cE^{+} = (E^{+}, \{{W|_{E^{+}}}_{\bullet}^{i}\}, \bfb_{-\epsilon})$ of rank $s$ such that $\mu(\cE^{+}) = \mu(\cE)$ with respect to $\ba_{0}$. This implies that there is a short exact sequence
\[
	0 \to E^{+} \to E \to E^{-} \to 0
\]
of bundles such that $E^{+}|_{p^{i}} \in \Omega_{\lambda^{i}}(W_{\bullet}^{i})$. Therefore there is a map $f : (\PP^{1}, \bp) \to \Gr(s, r)$ of degree $-d$ such that $f(p^{i}) \in \Omega_{\lambda^{i}}(W_{\bullet}^{i})$. Therefore
\[
	\langle \omega_{\lambda^{1}}, \omega_{\lambda^{2}}, \cdots,
	\omega_{\lambda^{n}}\rangle_{-d} = 1.
\]
In particular, to have a nonempty moduli space, $\mu(\cE^{+}) \le \mu(\cE)$, which is \eqref{eqn:supportingplane}. 

Now suppose that $D$ is a divisor satisfies all of the given strict linear inequalities of the form \eqref{eqn:supportingplane} for every collection of partitions $\lambda^{1}, \lambda^{2}, \cdots, \lambda^{n}$ with $\langle \omega_{\lambda^{1}}, \omega_{\lambda^{2}}, \cdots, \omega_{\lambda^{n}}\rangle_{-d} = 1$. Let $\ba$ be the associated parabolic weight data. Then for a general parabolic bundle $\cE = (\cO^{r}, \{W_{\bullet}^{i}\}, \ba)$, there is no possible destabilizing bundle. Therefore $\cE \in \rM_{\bp}(r, 0, \ba)$ and the moduli space is nonempty. Because $D$ is an ample divisor on $\rM_{\bp}(r, 0, \ba)$, $|mD| \ne \emptyset$ for some $m > 0$. Therefore $D \in \mathrm{int} \Eff(\rM)$. By taking the closure, we can obtain the effective cone. 
\end{proof}

\begin{remark}
The computation of the V-representation from the H-representation is highly nontrivial. In \cite{Bel17}, Belkale explains how to compute the extremal rays of the effective cone for the quotient stack $[\Fl(V)^{3}/\SL_{r}]$. He informed to the authors that this computation can be generalized to the case of arbitrary $n$  and for $\rM_{\bp}(r, 0, \ba)$, too.
\end{remark}

\subsection{Projective models and wall-crossing}\label{ssec:projmodel}

The remaining steps of Mori's program are the computation of projective models $\rM(D)$ for $\rM := \rM_{\bp}(r, 0, \ba)$ and the study of the rational contraction $\rM \dashrightarrow \rM(D)$. For $D \in \mathrm{int}\Eff(\rM)$, Proposition \ref{prop:birationalmodels} already provides the answer. It remains to find projective models associated to $D \in \partial \Eff(\rM)$. We content ourselves with a description for facets of $\partial \Eff(\rM)$.

The first type of facets are that associated to Gromov-Witten invariants, as described in Section \ref{ssec:effectivecone}. We call this type of facets as \emph{GW facets}. In this case, the boundary wall-crossing in Proposition \ref{prop:boundarywall} gives the contraction.

\begin{proposition}
Suppose that $D$ is a general point on a GW facet of $\Eff(\rM)$. Then $\rM(D) = \rM_{\bp}(s, -d, \bfb) \times \rM_{\bp}(r-s, d, \bc)$ for some $0 < s < r$, $d \ge 0$, and $\bfb$ and $\bc$.
\end{proposition}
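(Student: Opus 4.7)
The plan is to show that for a general $D$ on a GW facet, the associated parabolic weight $\ba_0$ lies on a boundary stability wall, and then to identify $\rM(D)$ with the wall moduli space $\rM_{\bp}(r, 0, \ba_0)$, which Proposition \ref{prop:boundarywall} already presents as the desired product.

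First I would unpack the argument of Proposition \ref{prop:facetGWinv} to extract the key dictionary: a general divisor $D = \cL^{\ell}\otimes \otimes_{i=1}^{n}F_{\lambda^{i}}$ on a GW facet corresponds, via the normalization $\ba_0 = (\lambda^{i}_{j}/\ell)$, to a parabolic weight $\ba_0$ lying on a stability wall $\Delta(s, d, \cJ)$ exactly one of whose two adjacent open chambers yields a nonempty moduli space. After relabelling, we have $\rM_{\bp}(r, 0, \ba_0^{+}) = \emptyset$ and $\rM_{\bp}(r, 0, \ba_0^{-}) \ne \emptyset$, where $d \le 0$ in the wall convention. Proposition \ref{prop:boundarywall} then immediately gives
\[
    \rM_{\bp}(r, 0, \ba_0) \;\cong\; \rM_{\bp}(s, d, \bfb^{-}) \times \rM_{\bp}(r-s, -d, \bc^{-}),
\]
which, after rewriting $d$ as $-d$ with $d \ge 0$, is exactly the product claimed in the statement.

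Next I would identify $\rM(D)$ with this wall moduli space using Pauly's theorem (Theorem \ref{thm:Pauly}). That theorem applies to \emph{any} parabolic weight, general or not, so it produces an ample line bundle $\Theta_{\ba_0}$ on $\rM_{\bp}(r, 0, \ba_0)$ whose space of sections is canonically the corresponding space of conformal blocks. Applying it both to $D$ on $\rM = \rM_{\bp}(r, 0, \ba)$ and to $\Theta_{\ba_0}$ on the wall moduli space yields natural isomorphisms
\[
    \rH^{0}(\rM, mD) \;\cong\; \VV^{\dagger}_{m\ell,\, m\vec{\lambda}} \;\cong\; \rH^{0}\bigl(\rM_{\bp}(r, 0, \ba_0),\, m\Theta_{\ba_0}\bigr)
\]
for all $m \ge 0$, so the two section rings coincide. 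Because $\Theta_{\ba_0}$ is ample on $\rM_{\bp}(r, 0, \ba_0)$,
\[
    \rM(D) \;=\; \proj \bigoplus_{m \ge 0} \rH^{0}(\rM, mD) \;\cong\; \rM_{\bp}(r, 0, \ba_0),
\]
which, combined with the product description above, proves the proposition.

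The main obstacle I expect is verifying that a general point on a GW facet really does produce a boundary wall (one of whose adjacent moduli spaces is empty), rather than a facet coming from the type (1) or (2) inequalities of Section \ref{ssec:effectivecone}. This is essentially built into the proof of Proposition \ref{prop:facetGWinv}: the empty-chamber hypothesis is what forces the existence of a short exact sequence $0 \to E^{+} \to E \to E^{-} \to 0$ with $\rk E^{+} = s$, $\deg E^{+} = d$, and fibrewise flags lying in the Schubert cells $\Omega_{\lambda^{i}}(W^{i}_{\bullet})$, which in turn is what packaged the condition into the Gromov-Witten invariant being one. Running that implication contrapositively --- a GW facet $\Rightarrow$ boundary wall --- is the only nontrivial step; everything else is a direct application of Propositions \ref{prop:boundarywall}, \ref{prop:birationalmodels} and Pauly's theorem.
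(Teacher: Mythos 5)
Your proposal is correct and follows essentially the same route as the paper, which proves this proposition simply by invoking the boundary wall-crossing of Proposition \ref{prop:boundarywall}: a general point of a GW facet corresponds to a weight on a boundary wall, the wall moduli space is the product $\rM_{\bp}(s,-d,\bfb)\times\rM_{\bp}(r-s,d,\bc)$, and Pauly's theorem identifies $\rM(D)$ with that wall moduli space. Your write-up merely makes explicit the section-ring comparison and the facet-to-boundary-wall dictionary that the paper leaves implicit in the proof of Proposition \ref{prop:facetGWinv}.
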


The second type of facets are of the form $\lambda_{j}^{k} = \lambda_{j+1}^{k}$. This case is related to moduli spaces of parabolic bundles with degenerated flags, which forgets $j$-th flag on $p^{k}$. In \cite{Pau96}, Pauly proved Theorem \ref{thm:Pauly} for such degenerated flags, too. The proof of the next proposition is essentially same to that of Proposition \ref{prop:birationalmodels}.

\begin{proposition}
Suppose that $D$ is a general point of the facet of $\Eff(\rM)$ which is given by $\lambda_{j}^{k} = \lambda_{j+1}^{k}$. Then $\rM(D) = \rM_{\bp}(r, 0, \bfb)$, which is the moduli space of parabolic bundles where its $k$-th flag is a partial flag of type $(1, 2, \cdots, \hat{j}, \cdots, r-1)$.
\end{proposition}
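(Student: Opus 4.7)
The plan is to mimic the proof of Proposition \ref{prop:birationalmodels}, using the version of Pauly's theorem (Theorem \ref{thm:Pauly}) valid for partial flags, as mentioned in \cite{Pau96}. First, I would write $D = \pi^{*}\otimes_{i=1}^{n}\pi_{i}^{*}\overline{F}_{\lambda^{i}} - k\Theta$ for some integers $k$ and partitions $\lambda^{i}$, as was done in the interior case. The condition $\lambda_{j}^{k} = \lambda_{j+1}^{k}$ places $D$ on a facet determined by the partition data at the $k$-th point, not by a boundary wall of the effective cone coming from a Gromov--Witten invariant.

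Next, I would use the same computation as in Proposition \ref{prop:birationalmodels} to identify
\[
	\rH^{0}(\rM, mD) = \VV_{m(N-k), (m\lambda^{1}, m\lambda^{2}, \cdots, m\lambda^{n})}^{\dagger}
\]
for all $m \ge 0$ (with $N$ defined as there). The key observation is that when $\lambda_{j}^{k} = \lambda_{j+1}^{k}$, the associated highest weight $m\lambda^{k}$ satisfies $(m\lambda^{k})_{j} = (m\lambda^{k})_{j+1}$, so the corresponding irreducible $\SL_{r}$-representation $V_{m\lambda^{k}}$ factors through the partial flag variety $\Fl_{(1,2,\cdots,\hat{j},\cdots,r-1)}(V)$ at the $k$-th point. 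Hence these conformal blocks are canonically identified with conformal blocks attached to the moduli stack of parabolic bundles with partial flag of type $(1,2,\cdots,\hat{j},\cdots,r-1)$ at $p^{k}$ and full flags elsewhere.

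Then I would invoke Pauly's theorem for partial flags: there is a parabolic weight $\bfb$ (with the degenerate flag structure at $p^{k}$) such that the line bundle $\Theta_{\bfb}$ on $\rM_{\bp}(r, 0, \bfb)$ is ample and its global sections are precisely the conformal block space above. Taking $\proj$ of the graded ring $\bigoplus_{m \ge 0}\rH^{0}(\rM, mD)$ then yields $\rM_{\bp}(r, 0, \bfb)$, proving the proposition.

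The only mildly subtle step is the boundary case $N-k = \lambda_{1}^{i}$ (i.e.\ when $D$ lies on more than one facet), where one must check that $D$ still lies in the effective cone and that the vanishing behavior is consistent with Pauly's result; but this is handled exactly as at the end of the proof of Proposition \ref{prop:birationalmodels}. Since the only new ingredient is the partial flag version of Pauly's theorem and the representation-theoretic observation that $(m\lambda^{k})_{j} = (m\lambda^{k})_{j+1}$ degenerates the flag type, the argument is essentially identical and requires no separate treatment of wall-crossing.
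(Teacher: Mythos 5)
Your proposal matches the paper's intended argument: the paper itself only remarks that ``the proof is essentially the same as that of Proposition \ref{prop:birationalmodels},'' relying on Pauly's theorem for degenerate (partial) flags, which is exactly the route you take. Your additional observation that $\lambda_{j}^{k}=\lambda_{j+1}^{k}$ forces $F_{\lambda^{k}}$ to be pulled back from the partial flag variety of type $(1,\cdots,\hat{j},\cdots,r-1)$ is the correct mechanism making that identification work.
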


The last type of facets are of the type $\lambda_{1}^{k} = \ell$.

\begin{proposition}\label{prop:projmodel3}
Suppose that $D$ is a general point on the facet $\lambda_{1}^{k} = \ell$. Then $\rM(D) = \rM_{\bp}(r, -1, \bfb)$ where $\bfb$ is a parabolic weight such that $b^{i} = \frac{1}{\ell}(\lambda^{i}_{1}, \lambda^{i}_{2}, \cdots, \lambda^{i}_{r-1})$ for $i \ne k$ and $b^{k} = \frac{1}{\ell}(\lambda^{k}_{1} - \lambda^{k}_{r-1}, \lambda^{k}_{2} - \lambda^{k}_{r-1}, \cdots, \lambda^{k}_{r-2} - \lambda^{k}_{r-1})$ (the last flag is of type $(2, 3, \cdots, r-1)$).
\end{proposition}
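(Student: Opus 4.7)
The strategy parallels Proposition \ref{prop:birationalmodels} and the preceding two facet cases: compute the section ring $\bigoplus_{m\geq 0}\rH^{0}(\rM, mD)$ via Proposition \ref{prop:coxringdominantweight}, then identify its $\proj$ with the claimed moduli space via Pauly's theorem (Theorem \ref{thm:Pauly}). Proposition \ref{prop:coxringdominantweight} gives $\rH^{0}(\rM, mD) = \VV_{m\ell, m\vec\lambda}^{\dagger}$, and the hypothesis $\lambda_{1}^{k} = \ell$ is equivalent to the associated parabolic weight satisfying $a_{1}^{k}=1$, lying on the boundary of the normalized-weight polytope where the moduli problem degenerates.

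The new ingredient needed here is an \emph{elementary (Hecke) transformation at $p^{k}$} to convert this boundary case into a valid moduli problem. The plan is: given $\cE = (E, \{W_{\bullet}^{i}\}, \ba)$ with $a_{1}^{k}=1$, form
\[
	E' := \ker\bigl(E\twoheadrightarrow (E|_{p^{k}}/W_{r-1}^{k})_{p^{k}}\bigr),
\]
a bundle of degree $-1$ with a canonical $1$-dimensional subspace $W'_{1} \subset E'|_{p^{k}}$ satisfying $E'|_{p^{k}}/W'_{1}\cong W_{r-1}^{k}$, through which the original flag lifts to a full flag on $E'|_{p^{k}}$. Assigning the new weights $a'_{1} = 1-a_{r-1}^{k}$ to $W'_{1}$ and $a'_{j+1} = a_{j}^{k}-a_{r-1}^{k}$ to the subsequent pieces preserves the parabolic degree, $\pdeg\cE'=\pdeg\cE$. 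Since $a'_{1}=a'_{2}=1-a_{r-1}^{k}$, the parabolic structure must degenerate to a \emph{partial flag of type $(2, 3,\ldots, r-1)$} at $p^{k}$ with weight exactly the $\bfb^{k}$ of the statement; away from $p^k$ the weights are unchanged. A direct slope calculation shows $\ba$-stability of $\cE$ is equivalent to $\bfb$-stability of $\cE'$, so this construction defines a birational morphism $\Phi:\rM_{\bp}(r, 0, \ba)\to\rM_{\bp}(r, -1, \bfb)$.

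To close, one invokes Pauly's theorem in the partial-flag, negative-degree setting (\cite[Corollary 6.7]{Pau96}) to produce an ample line bundle $\Theta_{\bfb}$ on $\rM_{\bp}(r, -1, \bfb)$ with $\rH^{0}(m\Theta_{\bfb})\cong\VV_{m\ell, m\vec\lambda}^{\dagger}$ for all $m\geq 0$; matching the two section rings and using ampleness then yields $\rM(D) = \rM_{\bp}(r, -1, \bfb)$. The main obstacle I anticipate is precisely this last identification: one must verify that Pauly's formula on the degree $-1$ side, with the reduced flag and shifted weights $\bfb^{k}$, returns the \emph{same} conformal blocks $\VV_{\ell,\vec\lambda}^\dagger$ that appear on the degree $0$ side, rather than a twisted version. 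This compatibility should reflect the fact that the degree shift $0\mapsto -1$ on the moduli side corresponds exactly to the shift $\lambda^{k}\mapsto \lambda^{k}-\lambda_{r-1}^{k}(1,\ldots, 1, 0)$ of $\GL_r$-weights on the representation side, which is absorbed by the determinantal factor implicit in Pauly's definition of $\Theta_{\bfb}$.
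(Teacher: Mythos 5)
Your central construction---the Hecke modification $E' = \ker\bigl(E \twoheadrightarrow (E|_{p^{k}}/W_{r-1}^{k})_{p^{k}}\bigr)$, the induced partial flag of type $(2,\ldots,r-1)$ at $p^{k}$, the shifted weights $b^{k}_{j} = \frac{1}{\ell}(\lambda^{k}_{j}-\lambda^{k}_{r-1})$, and the slope comparison showing the map $\rM_{\bp}(r,0,\ba) \to \rM_{\bp}(r,-1,\bfb)$ is well defined---is exactly the paper's. Two issues arise in how you close the argument. First, a misstatement: $\Phi$ is not a birational morphism but a $\PP^{1}$-fibration (the fiber over $\cE'$ is the $\PP^{1}$ of lines in the two-dimensional space ${W'}_{1}^{k}$ along which one can Hecke-modify back up), consistent with $D$ lying on $\partial\Eff(\rM)$ and hence not being big. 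This does not by itself break your strategy, since you ultimately argue via section rings rather than via birationality.

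The genuine gap is the step you yourself flag as "the main obstacle": the identification $\rH^{0}(\rM_{\bp}(r,-1,\bfb), m\Theta_{\bfb}) \cong \VV^{\dagger}_{m\ell, m\vec\lambda}$ for all $m$. This is precisely the delicate weight/level bookkeeping under a Hecke transformation (a degree shift together with a twist of the weight at $p^{k}$), and asserting that it "should be absorbed by the determinantal factor" is not a proof; without it you have matched nothing, and the whole conclusion collapses to the unproven claim. The paper avoids this computation entirely: since $\Phi$ is a fibration of relative Picard number one from the Mori dream space $\rM$, it must be the contraction associated to \emph{some} facet of the chamber of $\rM$; the projective models of all other facets (the GW facets and the facets $\lambda_{j}^{k}=\lambda_{j+1}^{k}$) are already identified, and a wall of type $\Delta(s,0,\cJ)$ with $1\in J^{k}$, $1\notin J^{i}$ separates the facets $\lambda_{1}^{i}=\ell$ for distinct $i$, so only one such facet is adjacent to the chamber of $\rM$; by elimination $\rM(D)=\rM_{\bp}(r,-1,\bfb)$. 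If you want to keep your more direct route, you must actually carry out the Pauly-type computation on the degree $-1$, partial-flag moduli space; otherwise, adopt the elimination argument, which requires no new input beyond the chamber structure already established.
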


\begin{proof}
By symmetry, we may assume that $k = n$. Let $D'$ be a big divisor which is sufficiently close to $D$. Then $\rM(D) = \rM(D')(D)$. Thus we may replace $\rM$ by $\rM(D')$. Equivalently, after Theorem \ref{thm:Pauly}, we may assume that $\ba$ is sufficiently close to $(\frac{1}{\ell}\lambda^{i})$. 

Let $\cE = (E, \{W_{\bullet}^{i}\}, \ba) \in \rM_{\bp}(r, 0, \ba)$. Consider the quotient map $E \to E|_{p^{n}}/W_{r-1}^{n} \to 0$ and let $E'$ be the kernel. Then $E'$ is a vector bundle of degree $-1$. For $p^{i}$ with $i < n$, let ${W'}_{j}^{i} = W_{j}^{i}$. Over $p^{n}$, let ${W'}_{j}^{n} = r^{-1}(W_{j}^{n})$ where $r : E'|_{p^{n}} \to E|_{p^{n}}$ is the restriction of $E' \hookrightarrow E$. Note that $\dim {W'}_{j}^{n} = j+1$. Thus we have a quasi parabolic bundle $\cE' := (E', \{{W'}_{\bullet}^{i}\})$ whose last flag over $p^{n}$ is of type $(2, 3, \cdots, r-1)$.

We claim that $\cE'$ is semistable with respect to $\bfb$. Let $\cF' = (F', \{{V'}_{\bullet}^{i}\}, \bc)$ be a parabolic subbundle of $\cE'$. To avoid a confusion, the slope with respect to $\bfb$ is denoted by $\mu_{\bfb}$. Because $b_{j}^{n} = a_{j}^{n} - a_{r-1}^{n}$,
\[
\begin{split}
	\mu_{\bfb}(\cE') &= \frac{1}{r}\left(-1
	+ \sum_{i < n}\sum_{j=1}^{r-1}
	a_{j}^{i} + b_{1}^{n} + \sum_{j=1}^{r-2}b_{j}^{n}\right)
	= \mu(\cE) - \frac{1}{r} - \frac{1}{r}\sum_{j=1}^{r-1}a_{j}^{n}
	+ \frac{1}{r}\left(b_{1}^{n}+\sum_{j=1}^{r-2}b_{j}^{n}\right)\\
	&= \mu(\cE) - \frac{1}{r} + \frac{1}{r}a_{1}^{n} - a_{r-1}^{n}.
\end{split}
\]

Suppose that $F'$ is a rank $s$ subbundle of $E$. Then $\ker r \cap F'|_{p^{n}} = 0$. If $J^{i}$ denotes the subset of indices $j\in[r]$ such that ${W'}_{j}^{i} \cap F'|_{p^{i}} \neq {W'}_{j-1}^{i} \cap F'|_{p^{i}}$,
\begin{equation}\label{eqn:slope}
\begin{split}
	\mu_{\bfb}(\cF') &= \frac{1}{s}\left(\deg F'
	+ \sum_{i < n}\sum_{j \in J^{i}}a_{j}^{i}
	+ \sum_{j \in J^{n}}b_{j}^{n}\right)\\
	&= \frac{1}{s}\left(\deg F' + \sum_{i}\sum_{j \in J^{i}}a_{j}^{i}\right)
	+ \frac{1}{s}\left(\sum_{j \in J^{n}}(b_{j}^{n} - a_{j}^{n})\right)
	= \mu(\cF') - a_{r-1}^{n}.
\end{split}
\end{equation}
Therefore $\mu_{\bfb}(\cE') - \mu_{\bfb}(\cF') = \mu(\cE) - \mu(\cF') - \frac{1}{r} + \frac{1}{r}a_{1}^{n}$. Since $a_{1}^{n}$ is sufficiently close to one, $\mu_{\bfb}(\cE') - \mu_{\bfb}(\cF') \ge 0$.

If $F'$ is not a subbundle of $E$, then there is a subbundle $F$ of $E$ which contains $F'$ and $F/F'$ is a torsion sheaf. Since $E/E'$ is of length one, $F/F'$ is also of length one. Therefore $\deg F' = \deg F - 1$. Let $\cF$ be the parabolic subbundle of $\cE$ whose underlying bundle is $F$. By a similar computation with \eqref{eqn:slope}, we have
\[
	\mu_{\bfb}(\cF') = \mu(\cF) - \frac{1}{s} + \frac{1}{s}(a_{1}^{n}
	- a_{r-1}^{n}) - a_{r-1}^{n}
\]
and it is straightforward to see that $\mu_{\bfb}(\cE') - \mu_{\bfb}(\cF') > 0$.

Therefore the map $\rM \to \rM_{\bp}(r, -1, \bfb)$ which sends $\cE$ to $\cE'$ is a well-defined morphism. This is a $\PP^{1}$-fibration and it is of relative Picard number one. Thus it is associated to a facet of $\Eff(\rM)$ which intersects the closure of the chamber for $\rM$. If $\rM(D) \ne \rM_{\bp}(r, -1, \bfb)$, then $\rM_{\bp}(r, -1, \bfb)$ is associated to another facet. Because we already know the projective models for the other facets, the only remaining possibility is $\lambda_{1}^{i} = \ell$ for some $i \ne n$. But because of the existence of a wall of the type $\Delta(s, 0, \cJ)$ where $1 \in J^{n}$ but $1 \notin J^{i}$, only one of these facets is a facet of the closure of the chamber of $\rM$. Thus $\rM(D) = \rM_{\bp}(r, -1, \bfb)$.
\end{proof}

Because all of the birational models and projective models can be described in terms of moduli spaces of parabolic bundles, the wall-crossings in Section \ref{ssec:wallchamber} are building blocks of the rational contraction $\rM \dashrightarrow \rM(D)$. This is in some sense very satisfactory, because all of them are smooth blow-ups/downs and projective bundle morphisms.

\subsection{Rationality}\label{ssec:rationality}

It is an old open problem determining whether the moduli space of (parabolic) bundles with a fixed determinant over a Riemann surface is rational or not (\cite{KS99, Hof07}). The wall-crossing toward a boundary wall was applied to show the fact that $\rM_{\bp}(r, 0, \ba)$ is rational in \cite{BH95}. Here we leave a sketch, for a reader's convenience.

It is sufficient to prove for the case that $\ba$ is sufficiently small, so $\rM_{\bp}(r, 0, \ba) = \Fl(V)^{n}\git_{L}\SL_{r}$ for some $L$. Cross several walls of type $\Delta(s, 0, \cJ)$, which are indeed GIT walls. If $\ba$ is sufficiently close to the boundary, then by Proposition \ref{prop:boundarywall}, $\rM_{\bp}(r, 0, \ba)$ is a projective bundle over $\rM_{\bp}(s, 0, \bfb) \times \rM_{\bp}(r-s, 0, \bc)$. Thus the problem is reduced to a lower rank case. If $r = 1$, the moduli space is a point, so it is trivial.

\begin{proposition}[\protect{\cite[Proposition 5.1]{BH95}}]
The moduli space $\rM_{\bp}(r, 0, \ba)$ is rational.
\end{proposition}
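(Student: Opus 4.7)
The plan is to argue by induction on the rank $r$, using the birational invariance of rationality to reduce to a small-weight moduli space, and then to run a scaling-type deformation of $\ba$ toward a boundary of the weight polytope $W_{r,n}$, so that Proposition \ref{prop:boundarywall} exhibits the moduli space as a projective bundle over a product of lower-rank moduli spaces.

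The base case $r = 1$ is trivial, since $\rM_{\bp}(1, 0, \ba)$ is a single point. For the inductive step, observe first that for any two general effective weights $\ba$ and $\ba'$ the moduli spaces $\rM_{\bp}(r, 0, \ba)$ and $\rM_{\bp}(r, 0, \ba')$ are birational (they differ by a sequence of wall-crossings, each of which is an isomorphism in codimension one away from the wall-crossing center). Hence it suffices to prove rationality for one convenient choice of $\ba$. I would take $\ba$ small in the sense of Theorem \ref{thm:smallweightGIT}, so that $\rM_{\bp}(r, 0, \ba) \cong \Fl(V)^{n} \git_{L_{\bd}} \SL_{r}$.

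Next, I would deform $\ba$ within the small-weight region by letting one of the coordinates, say $a_{j}^{i}$ for a well-chosen pair $(i, j)$, move toward zero; this corresponds to moving $\ba$ toward a face of $W_{r, n}$. On this deformation path, one crosses only GIT walls of the form $\Delta(s, 0, \cJ)$ (since the underlying bundles remain trivial throughout the small-weight regime, any destabilizing parabolic subbundle has degree $d = 0$). Eventually $\ba$ approaches the boundary of the effective region for the weights, and on the opposite side of the final wall the moduli space is empty. By Proposition \ref{prop:boundarywall}, the moduli space on the non-empty side acquires the structure of a projective bundle
\[
	\rM_{\bp}(r, 0, \ba^{-}) \;\longrightarrow\; \rM_{\bp}(s, 0, \bfb^{-})
	\times \rM_{\bp}(r-s, 0, \bc^{-})
\]
for some $1 \le s \le r-1$ and some induced weights $\bfb^{-}$, $\bc^{-}$.

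Since a projective bundle over a rational base is rational, it now suffices to know that both factors on the right are rational. Both $s$ and $r-s$ are strictly smaller than $r$, so the inductive hypothesis applies to each factor (after replacing $\bfb^{-}, \bc^{-}$ by general effective weights of the same rank and degree $0$, using the birational invariance once more; note that the degree is preserved at $0$ precisely because $d = 0$ on the chosen wall). Combining these gives rationality of $\rM_{\bp}(r, 0, \ba)$.

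The main obstacle, and the point that requires the most care, is the verification that one can actually reach a wall of the form $\Delta(s, 0, \cJ)$ at which the moduli space becomes empty on one side, while staying inside the small-weight region so that only GIT-type walls (with $d = 0$) are crossed. Concretely, one must exhibit a path in $W_{r, n}^{o}$ from the chosen general small weight to such a boundary wall, and check that along this path the trivial underlying bundle remains the only option so the Thaddeus-type description in Proposition \ref{prop:wallcrossing} reduces to the GIT master-space picture. This is essentially the content of \cite[Proposition 5.1]{BH95}; once it is secured, the induction closes and rationality follows.
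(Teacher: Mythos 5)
Your proposal follows essentially the same route as the paper's own sketch: reduce to a small weight so the moduli space is $\Fl(V)^{n}\git_{L}\SL_{r}$, cross GIT walls of type $\Delta(s,0,\cJ)$ toward the boundary, apply Proposition \ref{prop:boundarywall} to obtain a projective bundle over a product of lower-rank moduli spaces, and induct on the rank. The "main obstacle" you flag (reaching an emptying boundary wall while crossing only $d=0$ walls) is precisely what the paper defers to \cite[Proposition 5.1]{BH95}, so your account is consistent with the intended argument.
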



\bibliographystyle{alpha}

\end{document}